\newtheorem{thm}{Theorem}[section]
\newtheorem{conj}[thm]{Conjecture}
\newtheorem{lemma}[thm]{Lemma}
\newtheorem{cor}[thm]{Corollary}
\theoremstyle{definition}
\newtheorem{rem}[thm]{Remark}
\newcommand{\B}{\overline{B_1(0)}}
\newcommand{\R}{\mathbb{R}}
\newcommand{\St}{\text{st}}
\newcommand{\lk}{\text{link}}
\newcommand{\Sp}{\mathbb{S}}
\newcommand{\dK}{\partial K}
\newcommand{\dP}{\partial P}
\newcommand{\dB}{\partial B}
\newcommand{\dH}{\delta^H}
\newcommand*\longtwoheadrightarrow{\ensuremath{\relbar\joinrel\twoheadrightarrow
}}
\begin{document}
\title{A Geometric Lower Bound Theorem}
\author{Karim Adiprasito\thanks{K.~A.~Adiprasito acknowledges support by a Minerva postdoctoral fellowship of the Max Planck Society, and NSF Grant DMS 1128155.
}\\
\small Einstein Institute for Mathematics \\ [-0.8 ex]
\small Hebrew University of Jerusalem \\ [-0.8 ex]
\small Jerusalem, 91904 Israel \\ [-0.8 ex]
\small \texttt{adiprasito@math.huji.ac.il} \\[-0.8 ex]
\and Eran Nevo\thanks{Research of E.~Nevo was partially supported by Israel Science Foundation grants ISF-805/11 and ISF-1695/15.
} \\
\small Einstein Institute for Mathematics \\ [-0.8 ex]
\small Hebrew University of Jerusalem \\ [-0.8 ex]
\small Jerusalem, 91904 Israel \\ [-0.8 ex]
\small \texttt{nevo@math.huji.il} \\
\and Jos\'e Alejandro Samper\thanks{J.~A.~Samper thanks Isabella Novik for the research assistant positions funded through NSF Grants DMS-
1069298 and DMS-1361423.}\\
\small Department of Mathematics\\[-0.8ex]
\small University of Washington\\[-0.8ex]
\small Seattle, WA 98195-4350, USA\\[-0.8ex]
\small \texttt{samper@math.washington.edu}
}
\date{\today}

\maketitle
\begin{center}
\emph{Dedicated to Gil Kalai on the occasion of his 60th birthday.}
\end{center}
\begin{abstract}
%We provide a geometric refinement of the celebrated lower bound theorem, and
We resolve a conjecture of Kalai relating approximation theory of convex bodies by simplicial polytopes to the face numbers and primitive Betti numbers of these polytopes and their toric varieties. The proof uses higher notions of chordality. Further, for $C^2$-convex bodies, asymptotically tight lower bounds on the $g$-numbers of the approximating polytopes are given, in terms of their Hausdorff distance from the convex body.
\end{abstract}
\section{Introduction}
The combinatorial structure of polytopes
was studied since antiquity and has been one of the major topics in algebraic and geometric combinatorics in the last few decades. The simplest combinatorial invariant of a $d$-polytope $P$ is the $f$-vector $(f_{-1}, f_0, \dots, f_{d-1})$, where $f_i$ is the number of $i$-dimensional faces of $P$. Understanding face numbers of polytopes is one of the oldest branches of mathematics.

The celebrated $g$-theorem, conjectured by McMullen \cite{mcmullen1970},  gives a complete characterization of the $f$-vectors of \emph{simplicial} polytopes, namely polytopes all whose proper faces are simplices.
It is conveniently phrased is terms of the $g$-vector, obtained by a linear transformation of the $f$-vector.
Billera and Lee \cite{Billera-Lee} proved sufficiency of the numerical conditions and Stanley \cite{Stanley:NumberFacesSimplicialPolytope-80}
proved their necessity by relating the $g$-numbers to the primitive Betti numbers of the associated projective toric varieties. Some extremal cases in terms of the $g$-numbers are well understood; for instance polytopes with $g_k=0$ are exactly the $(k-1)$-stacked polytopes, as stated in the Generalized Lower Bound Conjecture (GLBC) of McMullen-Walkup \cite{McMullenWalkup:GLBC-71} and recently proved by Murai-Nevo \cite{Murai-Nevo:GLBC}. However, away from the extremal primitive Betti vectors, the simplicial polytopes become much harder to understand.

An equally foundational subject in polytope theory is approximation theory. Polytopes are dense in the space of convex bodies with respect to several different metrics, and the question what is the minimal number of faces of a certain dimension that are needed to produce an approximation of a certain quality has been substantially studied; see Schneider \cite{Schneider}, Gruber \cite{MR949830, MR1116357}, and finally B\"or\"oczky \cite{MR1770932, MR1742207}, producing asymptotically tight answer for the \emph{individual} face numbers for $C^2$-convex bodies.

In 1994 Kalai \cite{Kalai:Aspects-94} posed a visionary conjecture that relates the entire $f$-vector of a simplicial polytope $P$ to its metric structure.
Roughly speaking, Kalai conjectures that if $K$ is a convex body whose boundary is of type $C^1$ and $P$ is a simplicial polytope that is close to $K$ in the Hausdorff distance, then the $f$-vector of $P$ must be far away from extremal $f$-vectors in the sense of the $g$-theorem. Kalai states his conjecture using the $g$-vector and shadow functions $\partial^k$ (see \cite[Section 8.5]{MR1311028}):

%This paper takes our understanding much beyond that, by providing a connection between geometric approximation and the algebraic $g$-numbers, thereby connecting the geometry of the toric variety with the geometry of the underlying polytope.

%This connection was proposed in a 1994 conjecture of Kalai \cite{Kalai:Aspects-94}.
%Roughly speaking, Kalai conjectures that if $K$ is a convex body whose boundary is of type $C^1$ and $P$ is a simplicial polytope that is close to $K$ in Hausdorff distance, then its $f$-vector must be far away from extremal $f$-vectors in the sense of the $g$-theorem. Kalai states his conjecture using the $g$-vector:

\begin{conj}[Kalai \cite{Kalai:Aspects-94}]\label{Kalai}
Let $K$ be a $C^1$-convex body in $\R^d$ and let $\{P_n\}_{n=1}^{\infty}$ be a sequence of simplicial polytopes that converges to $K$ in the Hausdorff metric. Then
\begin{itemize}
\item[(i)] for every $1\le k \le \left\lfloor \frac d2 \right\rfloor$,
\[\lim_{n\to \infty} g_k(P_n) = \infty, \]
\item[(ii)] and for every $1\le k \le \left\lfloor \frac d2 \right\rfloor -1$, %    we have that:
\[ \lim_{n\to \infty} \left(g_k -\partial^{k+1}g_{k+1} \right)= \infty. \]
\end{itemize}
\end{conj}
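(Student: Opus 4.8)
The plan is to proceed by contraposition, after two reductions. Since $\partial^{k+1}g_{k+1}\ge 0$, assertion~(ii) for an index $k$ implies assertion~(i) for the same $k$; it therefore suffices to prove (ii) for $1\le k\le\fldd-1$ together with (i) in the single remaining case $k=\fldd$. In each of these I assume, toward a contradiction, that along a subsequence $g_k(P_n)\le C$ — respectively $g_k(P_n)-\partial^{k+1}g_{k+1}(P_n)\le C$ — for a fixed constant $C$, and I aim to show that $K$ cannot be a $C^1$-convex body. One preliminary observation is used: a $C^1$-convex body is never a Hausdorff limit of simplicial polytopes with a bounded number of vertices (such a limit is again a polytope, and the boundary of a polytope is not $C^1$), so necessarily $f_0(P_n)\to\infty$; this is the only place where the assumption that $K$ is a body enters. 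The proof then splits into a combinatorial structure theorem for near-extremal simplicial polytopes — where the higher-chordality apparatus is used — and a geometric incompatibility of that structure with $C^1$ limits.

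The combinatorial input builds on the Generalized Lower Bound Theorem of McMullen--Walkup, established by Murai--Nevo: $g_k(P)=0$ precisely when $P$ is $(k-1)$-stacked, i.e.\ admits a triangulation with no interior faces of dimension $\le d-k-1$. What is needed is a \emph{stability} version of this equivalence, and this is supplied by a hierarchy of chordality notions: for each parameter between $1$ — where one recovers the familiar flag/graph-chordality picture governing $g_2$ — and $\fldd$, one defines a chordality condition calibrated so that $(k-1)$-stackedness of $\partial P$ corresponds to chordality of parameter $k$, and one proves a rigidity-type lower bound $g_k(P)\ge\Phi_k(P)$, where $\Phi_k(P)$ counts the local obstructions to $\partial P$ bounding a $(k-1)$-stacked triangulation. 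With the base case, this should give: if $g_k(P)\le C$, then $P$ admits a triangulation with at most $N(C,d)$ interior faces of dimension $\le d-k-1$. For assertion~(ii) the same machinery is run with the sharper hypothesis $g_k-\partial^{k+1}g_{k+1}\le C$, which by Macaulay's description of $M$-sequences pins down one further graded piece of $(g_0,g_1,\dots)$ and should yield the analogous structure with the stacking parameter shifted by one, the relevant normal form being that of the Billera--Lee extremal polytopes.

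The geometric half is the crux, and it must be a soft limiting argument, since, unlike $C^2$, the hypothesis $C^1$ supplies no quantitative modulus. Passing to a Hausdorff limit, the $\le N(C,d)$ exceptional interior faces of the triangulations involve only boundedly many vertices, so the non-$(k-1)$-stacked part of $\partial P_n$ covers, in the limit, a portion of $\partial K$ contained in a union of boundedly many hyperplanes. Everything thus reduces to showing that the $(k-1)$-stacked part of $\partial P_n$ \emph{flattens}: its facets come within Hausdorff distance $o(1)$ of a bounded family of supporting hyperplanes of $K$. Here I would use that a $(k-1)$-stacked triangulation is assembled — up to boundedly many exceptional cells — by iteratively attaching elementary $(k-1)$-stacked pieces in a tree-like pattern, each attachment creating on $\partial P_n$ local ``simplicial'' configurations which, if they did not degenerate in the limit, would leave a boundary point of $K$ with a normal cone of positive solid angle, impossible for a $C^1$-body. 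Propagating this degeneration along the attachment tree — and bounding, in terms of $C$ and $d$, how many distinct supporting hyperplanes the local pieces can turn toward — yields that $\partial P_n$ flattens onto finitely many hyperplanes $H_1,\dots,H_r$; hence $\partial K\subseteq H_1\cup\dots\cup H_r$, and a short convexity argument (realizing $K$ as the bounded polyhedron cut out by the corresponding halfspaces) forces $K$ to be a polytope, contradicting $C^1$-smoothness. The step I expect to be the main obstacle is exactly this propagation: making the local degenerations \emph{uniform} as $n\to\infty$ and keeping the number of limiting hyperplanes bounded. For $C^2$-convex bodies one finally replaces this soft step with B\"or\"oczky-type quantitative approximation estimates, upgrading ``$\partial P_n$ flattens'' to an explicit lower bound for $g_k(P_n)$ in terms of $\delta^H(P_n,K)$, the asymptotically tight bound promised in the abstract.
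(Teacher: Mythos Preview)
Your plan diverges from the paper in scope and in method, and both the combinatorial and the geometric halves contain genuine gaps. First, note that the paper proves only part~(i); part~(ii) is left open except for the observation (Remark~\ref{rem:barany}) that it holds for random polytopes. So your reduction ``prove~(ii) for $k<\fldd$ and~(i) only for $k=\fldd$'' attempts strictly more than what is established here. The paper's route to~(i) is also quite different: rather than a stability version of the GLBC, it uses the inequality $g_k(\Delta)\ge\tilde\beta_{d-k-1}(\Delta_W)$ for induced subcomplexes (Theorem~\ref{thm:QLBT} or Lemma~\ref{lem:QLBT}), and constructs, via generic projections of $K$ to $(d-k)$-dimensional subspaces, arbitrarily many pairwise disjoint $(d-k-1)$-spheres $\gamma_i\subset\partial K$. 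For $P$ close to $K$ the induced subcomplex on vertices near $\gamma_i$ carries a nontrivial $(d-k-1)$-cycle, and the $C^1$ hypothesis (non-singularity of $\partial K$) is used only to rule out long edges of $P$ joining distinct $\gamma_i$'s; no structure theorem for near-stacked polytopes is needed.

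Your combinatorial input --- ``$g_k(P)\le C$ implies a triangulation with at most $N(C,d)$ interior faces of dimension $\le d-k-1$'' --- is a strong stability form of the Murai--Nevo theorem that is neither proved in your outline nor available in the literature; the higher-chordality results you allude to yield lower bounds of the shape $g_k\ge\tilde\beta_{d-k-1}(\Delta_W)$, which is a different statement and does not obviously bound the number of interior faces in any triangulation. More seriously, the geometric half fails outright: the claim that the $(k-1)$-stacked part of $\partial P_n$ ``flattens onto finitely many hyperplanes $H_1,\dots,H_r$'' is false for every $k\ge 2$. A stacked $d$-polytope with $n$ vertices has $\Theta(n)$ facets spanning $\Theta(n)$ distinct hyperplanes, and nothing in the stacked structure forces these to accumulate on a bounded set of hyperplanes as $n\to\infty$. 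The tree-like assembly you describe creates \emph{no} vertex with positive-solid-angle normal cone once the stacking is fine enough; the obstruction to approximating a $C^1$ body by stacked polytopes is not a flattening phenomenon at all, but the fact (cf.\ Lemma~\ref{lem:links}) that the union of the cone-triangulations over vertices in a half-space must fill a deep cross-section of $P$, forcing some simplex with short edges to sit far from $\partial K$. Your propagation argument does not capture this, and the conclusion $\partial K\subseteq H_1\cup\cdots\cup H_r$ cannot be reached along the lines you sketch.
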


The aim of this paper is to resolve part (i) of Conjecture~\ref{Kalai} and provide a quantitative lower bound on the $g$-numbers in the case when the boundary of $K$ is of type $C^2$. This provides the first bridge between the approximation theory by convex polytopes and the Stanley-Reisner theory of convex polytopes. From the geometric point of view,
it connects the geometry of the toric variety of the approximating polytope with the geometry of the underlying polytope.
More specifically,
this result shows that there is an intimate relation between the metric structure of some embeddings of a polytope and the primitive Betti numbers in the cohomology ring of the associated toric variety. On the other hand, our quantitative results generalize the theorems of B\"or\"oczky in the case when the approximating polytopes are simplicial.

Although in this paper we focus mainly on the Hausdorff metric, most of the results hold for other metrics, such as Schneider's metric, the Banach--Mazur distance, the symmetric difference distance, etc.\ as we rely on B\"or\"oczky's method \cite{MR1770932} for the final approximation.

In \cite{ANS15} we provided a notion of higher chordality of simplicial complexes and showed that it generalizes the classical notion of chordal graphs. In \cite{AdipII} the first named author introduced toric chordality, a powerful algebraic tool to study chordality in the stress-space of the simplicial complex as studied by Lee \cite{MR1384883}. He related this algebraic notion of chordality to the higher chordality notions of \cite{ANS15} and derived, among many other results,
a quantitative version of the GLBC in terms of the topological Betti numbers of induced subcomplexes. In this paper, we use this result to prove Kalai's lower bound conjecture in full generality (alternatively, for self-containedness, we use a weaker statement proved in the Appendix).

This paper is organized as follows:
in Section~\ref{sec:pre} we provide the needed preliminaries, in Section~\ref{sec:g_2} we give a simple proof of Conjecture~\ref{Kalai}(i) for the unit $4$-ball, using framework rigidity arguments. These arguments are vastly generalized in Section~\ref{sec:C^1} to prove Conjecture~\ref{Kalai}(i) in full generality, for $C^1$-convex bodies.
In Section~\ref{sec:C^2} we generalize B\"or\"oczky's results by
giving asymptotically tight lower bounds on the $g$-numbers when approximating a $C^2$-convex body, in terms of its Hausdorff distance from the approximating simplicial polytope. We also observe that Conjecture~\ref{Kalai}(ii) holds for approximations by random polytopes.

\section{Preliminaries}
\label{sec:pre}
%%---------------------
\subsection{Convex bodies}
A convex body $K$ in $\mathbb{R}^d$ is a convex compact subset of $\mathbb{R}^d$ with non-empty interior. The main example of a convex body is the closed unit ball $\B$ in $\mathbb{R}^d$ with the standard metric. In general, every convex body is a convex embedding of $\B$ in $\R^d$. The boundary of a convex body $K$ is denoted by $\partial K$,
and $\mathbb{S}^{d-1}:=\partial \overline{B_1(0)}$ denotes the standard unit sphere.

Endow $\R^d$ with the standard inner product denoted by $\langle\,, \, \rangle$. For an element $u \in \Sp^{d-1} \subseteq \R^d$ and a convex body $K$, let $c(u, K) = \max_{\{v\in K\}} \langle u,v \rangle$. Also, let $H^+(u, K) = \{s \in \R^d, \langle u, s\rangle \le c(u,K)\}$ be a supporting halfspace of $K$ in direction $u$. It is well known that $K=\bigcap_{u\in \Sp^{d-1}} H^+(u,K)$. The boundary of $H^+(u,K)$ is denoted by $H(u,K)$. For a point $x\in \partial K$ there is at least one point $u\in \Sp^{d-1}$ such that $x\in H(u, K)$. If this point $u$ is unique we say that $x$ is non-singular. Denote the unique such direction by $u(x)$, whenever $x$ is a non singular point. For every non singular point $x$ there exist neighborhoods $U_x\subseteq \partial K$ and $V_x \subseteq H(u(x), K)$ of $x$, where $V_x$ is convex, and a non-negative convex function $f_x: V_x \to \R$, such that, for every $v$ in $V_x$, the point $\varphi_x(v) = v - f_x(v)u(x)$ is an element of $U_x$ and the map $\varphi_x$ is a homeomorphism from $V_x$ to $U_x$.

Endowing $\B$ with its standard differential structure, we say that a convex body $K$ is of \emph{type $C^k$} if it is the image of a $C^k$-embedding of $\B$ in $\R^d$. Equivalently, the boundary $\dK$ is a $C^k$-hypersurface in $\R^d$. If $k \ge 1$, and $K$ is a $C^k$-convex body, then every point $x\in \partial K$ is non-singular.

\subsection{Polytopes and simplicial complexes}
A \emph{polytope} $P$ is the convex hull of finitely many points in some Euclidean space; equivalently it is a bounded intersection of finitely many closed half-spaces. Polytopes are a very special class of convex bodies. A \emph{face} of a polytope $P$ is the intersection of a supporting hyperplane of $P$ with $P$. The \emph{dimension} of a face is the dimension of its affine span. Assume that $P$ is $d$-dimensional. The \emph{$f$-vector}  of $P$ is the vector $f_P:= (f_{-1},\, f_{0},\, f_{1}, \dots, \,f_{d-1})$ where $f_i$ is the number of $i$-dimensional faces of $P$ ($f_{-1}=1$ for the empty face). A \emph{simplex} is the convex hull of a set of affinely independent points, thus a $k$-dimensional simplex has $k+1$ vertices. A polytope $P$ is \emph{simplicial} if all proper faces of $P$ are simplices. We denote the set of proper faces of $P$ by $\partial P$ and call it the \emph{boundary} of~$P$.

A (geometric) \emph{simplicial complex} $\Delta$ is a finite family of simplices such that (i) if $F$ is in $\Delta$ and $G$ is a face of $F$, then $G$ is also in $\Delta$,  and (ii) for any two elements $F$ and $G$ of $\Delta$, $F\cap G$ is a face of both $F$ and $G$. Note that a polytope $P$ is simplicial if and only if the boundary of $P$ is a simplicial complex. The elements of a simplicial complex are also called \emph{faces} and the \emph{dimension} of a simplicial complex is the maximal dimension of a face. As in the case of polytopes we may define the $f$-vector of $\Delta$, $f_{\Delta}:=(f_{-1}, f_0, \dots, f_{d-1})$,  to be the vector such that $f_i$ is the number of faces of dimension $i$, called \emph{$i$-faces}.
Thus, for $\Delta=\partial P$, $f_{\Delta}=f_P$.

The set of faces of $\Delta$ of dimension at most $i$ is a subcomplex called the \emph{$i$-th skeleton} of $\Delta$ and denoted by $\Delta^{(i)}$. The set of $0$-faces is denoted by $V(\Delta)$ and is called the set of \emph{vertices} of $\Delta$; the $1$-faces are called \emph{edges}. When all faces of $\Delta$ that are maximal under inclusion have the same dimension $d$ we say $\Delta$ is \emph{pure} and refer to its $d$-faces as \emph{facets} and to its $(d-1)$-faces as \emph{ridges}.

The \emph{link} of a face $F$ of $\Delta$, denoted by $\lk_\Delta(F)$, or $\lk(F)$ for short, is the set of all faces $G$ of $\Delta$, such that $F\cap G = \emptyset$ and  $G$ is contained in a face that contains $F$. It is straightforward (see \cite[Proposition 2.4, page 55]{MR1311028}) that for every face $F$ of a simplicial polytope $P$  the link of $F$ in $\partial P$ is combinatorially isomorphic to the boundary of some simplicial polytope. The link of a vertex is sometimes called a \emph{vertex figure}. For a subset $W$ of the vertex set of $\Delta$, let $\Delta_W$ denote the \emph{induced subcomplex} of $\Delta$ on $W$, namely the complex whose faces are the subsets of $W$ which are faces of $\Delta$.

For a simplicial complex $\Delta$, let $\tilde{H}_k(\Delta)$ be the reduced $k$-th (simplicial or singular) homology group over $\mathbb{Q}$ and let $\tilde{\beta}_k(\Delta) := \dim_{\mathbb{Q}} \tilde{H}_k(\Delta)$ be the $k$-th topological Betti number. We say that a cycle (either simplicial or singular) is not trivial if its homology class does not vanish. Simplicial cycles can be viewed as singular cycles.

For a simplex $\Gamma$ in $\R^d$ of dimension $<d$, and $v$ a point not in the affine span of $\Gamma$, let $v*\Gamma = \text{conv}(v, \Gamma)$. The simplex $v*\Gamma$ is called the cone over $\Gamma$ with apex $v$.

A point set in $\mathbb{R}^d$ is \emph{generic}, or in \emph{general position}, if any $d+1$ of its points are affinely independent. An affine subspace is \emph{generic} w.r.t. a collection of geometric simplices if it contains no vertex, and its parallels contain no edge, of these simplices.

\subsection{f-vectors of simplicial polytopes}
%From now on $P$ denotes a simplicial polytope.
%We write $f_P:=f_{\partial P}$ for the $f$-vector of the boundary of $P$ as a simplicial complex.
The \emph{$f$-polynomial} of a $d$-dimensional simplicial polytope $P$ is the generating function of the $f$-vector, given by the polynomial $f_P(x) = \sum_{j=0}^d f_{j-1}x^j$. Sometimes it is convenient to consider the \emph{$h$-polynomial}, $h_P(x) := (1-x)^df_P\left(\frac x{1-x}\right)$. The $h$-vector $(h_0, h_1, \dots, h_d)$ of $P$ is the vector of coefficients of the $h$-polynomial, that is, $h_P(x) = \sum_{i=0}^d h_ix^i$. Knowing the $h$-vector is equivalent to knowing the $f$-vector. The Dehn-Sommerville relations (see \cite[Theorem 3.2 and Prop. 3.3]{MR0189039}) assert that $h_i = h_{d-i}$ for a
simplicial $d$-polytope $P$ and $0\le i \le d$. It follows that the first half of the entries of the $f$-vector of $P$ determine the entire $f$-vector of $P$.

The celebrated classification by  \cite{Billera-Lee} and \cite{Stanley:NumberFacesSimplicialPolytope-80} of the $f$-vectors of simplicial $d$-polytopes is known as the $g$-theorem and is usually stated in terms of the $g$-vector $(g_0, g_1, \dots, g_{\lfloor \frac d2 \rfloor})$, where $g_0:= h_0=1$ and $g_i = h_i - h_{i-1}$ for $1\le i \le \lfloor \frac d2 \rfloor$. To prove Conjecture~\ref{Kalai}(i) we only require the lower bound part of this theorem that states the nonnegativity of the $g_i$.

\begin{thm}{\label{gt}} ($g$-theorem) An integer vector $(g_0, g_1, \dots, g_{\lfloor \frac d2 \rfloor})$ is the $g$-vector of a simplicial $d$-polytope if and only if it is the Hilbert function of some graded commutative algebra finitely generated in degree 1. In particular, $g_0 = 1$ and $g_k \ge 0$ for $1\le k \le \lfloor \frac d2 \rfloor$.
\end{thm}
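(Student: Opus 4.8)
The plan is to recall that this is the celebrated $g$-theorem of Billera--Lee \cite{Billera-Lee} and Stanley \cite{Stanley:NumberFacesSimplicialPolytope-80}, and to outline its two directions, emphasizing the implication we actually use (necessity, and in particular nonnegativity of the $g_i$). For necessity, I would argue as follows. Since $\dP$ is a simplicial $(d-1)$-sphere, all of its links are spheres, so by Reisner's criterion the Stanley--Reisner ring $\mathbb{Q}[\dP]$ is Cohen--Macaulay of Krull dimension $d$. Hence a generic linear system of parameters $\theta_1,\dots,\theta_d$ is a regular sequence, and the Artinian reduction $A:=\mathbb{Q}[\dP]/(\theta_1,\dots,\theta_d)$ is a standard graded $\mathbb{Q}$-algebra with $\dim_\mathbb{Q} A_i=h_i(P)$ for all $i$ (so in particular $\dim_\mathbb{Q}A_0=h_0=1$).

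It then suffices to produce a linear form $\omega\in A_1$ for which multiplication $\times\omega:A_{i-1}\to A_i$ is injective for every $1\le i\le\fldd$: the quotient $B:=A/\omega A$ is then itself a standard graded commutative $\mathbb{Q}$-algebra (finitely generated in degree $1$) with $\dim_\mathbb{Q}B_i=h_i-h_{i-1}=g_i$ for $i\le\fldd$ and $\dim_\mathbb{Q}B_0=1$, which is exactly the claimed characterization on the necessity side, and yields $g_k\ge 0$ immediately. The existence of such an $\omega$ — the Hard Lefschetz property of $A$ — is the deep point. Stanley's route is to assume first that $P$ has rational vertices, pass to the associated projective toric variety $X_P$, use the isomorphism $H^{2i}(X_P;\mathbb{Q})\cong A_i$ together with the vanishing of the odd cohomology, and invoke the Hard Lefschetz theorem for $X_P$ with $\omega$ the class of an ample torus-invariant divisor; a density/perturbation argument then removes the rationality hypothesis. (Alternatively one could cite the later combinatorial proofs of Hard Lefschetz for general simplicial polytopes, e.g.\ McMullen's weight-algebra approach.)

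For sufficiency, the hypothesis that $(g_0,\dots,g_{\fldd})$ is the Hilbert function of a graded algebra generated in degree $1$ is, by Macaulay's theorem, equivalent to its being an M-vector. Given such a vector, one builds an explicit simplicial $d$-polytope realizing it via the Billera--Lee construction: encode $g$ by an order ideal of monomials, take the corresponding ``squeezed'' subcomplex of the boundary of a cyclic polytope, verify by a shelling/partitionability argument that its $h$-vector is the prescribed one, and exhibit it as the boundary complex of an actual polytope by a suitable lifting of a cyclic polytope. Since the Dehn--Sommerville relations freely determine the top half of the $h$-vector from the bottom half, this produces the desired $P$.

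The main obstacle is, of course, the Hard Lefschetz input feeding the necessity direction; the rest is bookkeeping with Cohen--Macaulay rings and the explicit Billera--Lee realization. Because our application of Conjecture~\ref{Kalai}(i) requires only nonnegativity of the $g_i$ — equivalently, only that $g$ is the Hilbert function of the standard graded algebra $B=A/\omega A$ — it is enough for us to cite \cite{Stanley:NumberFacesSimplicialPolytope-80} (and \cite{Billera-Lee} for the converse), which is what we do.
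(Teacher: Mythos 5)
This is a background result that the paper does not prove but simply cites to Billera--Lee and Stanley, and your proposal correctly recognizes this while giving an accurate outline of the standard arguments behind those citations (Reisner/Cohen--Macaulayness and Hard Lefschetz for necessity, Macaulay's theorem and the squeezed-ball construction for sufficiency). Your treatment matches the paper's, and you rightly note that only the nonnegativity $g_k\ge 0$ is needed for the application to Conjecture~\ref{Kalai}(i).
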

A numerical characterization of the Hilbert functions as in the $g$-theorem is due to Macaulay, using his shadow functions $\partial^k(\cdot)$, cf. \cite[Section 8.5]{MR1311028}. We will use them only in our last remark, Remark~\ref{rem:barany}, on Conjecture~\ref{Kalai}(ii).

 %The description of polytopes with $g_k= 0$ for some $k$ is given in terms of triangulations. A triangulation of a polytope $P$ is a simplicial complex $\Sigma$ such that the set of vertices of $\Sigma$ coincides with the set of vertices of $P$ and such that  the union of the simplices of $\Sigma$ is $P$. A simplicial polytope $P$ is said to be $r$-stacked if there exists a triangulation $\Sigma$ of $P$ such that $\Sigma^{(d-1-r)}= P^{(d-1-r)}$. The following result is due to Murai, Nevo  \cite{MR0298557} with the third equivalence recently added by Adiprasito \cite{AdipII}.
%\begin{thm}[The generalised lower bound theorem]{\label{MN}} Let $P$ be a simplicial $d$-polytope and $2\le k \le \lfloor \frac d2 \rfloor$. The following are equivalent:
%\begin{enumerate}
%\item[(i)] $g_k(P)=0$,
%\item[(ii)] $P$ is $(k-1)$-stacked.
%\item[(iii)] The boundary of $P$ is toric $k$-chordal and has no missing faces of dimension $[k,d-k]$.
%\end{enumerate}
%\end{thm}

% For a simplicial $d$-polytope $P$ with boundary complex $\Delta$, consider the set of simplices \begin{equation*}\Delta(k):= \left\{ F \subseteq V(\Delta) \, : \, |F|= d+1, \, F^{(k)}\subseteq \Delta^{(k)}\right\}. \end{equation*}
%\cite{MR3037614} showed that if $g_k(P) = 0$, then $\Delta(d-1-k)$ is a triangulation of $P$; it the followed from a combination of results of \cite{MR2070631} and \cite{2011arXiv1102.0856B} that if $g_k(P) = 0$ then $\Delta(d-1-k)$ is the unique $(k-1)$-stacked triangulation of $P$.

The following recent result of Adiprasito \cite{AdipII} generalizes the lower bound theorem, and will be crucial in our proof of Conjecture~\ref{Kalai}(i).

\begin{thm}[The quantitative lower bound theorem]\label{thm:QLBT}
Let  $P$ be a simplicial $d$-polytope with boundary complex $\Delta$, $k\le \frac d2$,  and let $W$ be any subset of the vertices, then:
\begin{equation}\label{eqn:QLBT}\tilde{\beta}_{d-k-1}(\Delta_W) \le g_k(\Delta). \end{equation}
\end{thm}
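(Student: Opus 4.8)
The plan is to translate the inequality into a statement about the generic Artinian reduction of the Stanley--Reisner ring of $\Delta$, and then to embed the reduced homology of $\Delta_W$ linearly into a $g_k$-dimensional space of ``primitive stresses''. Fix a generic geometric realization of $\dP$ in $\R^d$, let $\Theta=(\theta_1,\dots,\theta_d)$ be the linear system of parameters on $\mathbb{Q}[\Delta]$ determined by the vertex coordinates, and set $A:=\mathbb{Q}[\Delta]/(\Theta)$. As $\dP$ is simplicial, $\Delta$ is a homology $(d-1)$-sphere, so $\mathbb{Q}[\Delta]$ is Cohen--Macaulay and Gorenstein; hence $\dim_{\mathbb{Q}}A_k=h_k(\Delta)$, the ring $A$ satisfies Poincar\'e duality $A_k\cong(A_{d-k})^\vee$, and, by the hard Lefschetz property underpinning Theorem~\ref{gt}, for a generic linear form $\omega\in A_1$ and every $k\le d/2$ the map $\cdot\,\omega\colon A_{k-1}\to A_k$ is injective while $\cdot\,\omega\colon A_{d-k}\to A_{d-k+1}$ is surjective. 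Therefore
\[
g_k(\Delta)=\dim_{\mathbb{Q}}\bigl(A_k/\omega A_{k-1}\bigr)=\dim_{\mathbb{Q}}\ker\bigl(\cdot\,\omega\colon A_{d-k}\to A_{d-k+1}\bigr),
\]
and it suffices to construct an injection of $\tilde H_{d-k-1}(\Delta_W)$ into this $g_k$-dimensional space $\mathcal S_k$. (By combinatorial Alexander duality inside $|\Delta|\cong\Sp^{d-1}$, using that $|\Delta|\setminus|\Delta_W|$ deformation retracts onto $|\Delta_{V(\Delta)\setminus W}|$, one has $\tilde\beta_{d-k-1}(\Delta_W)=\tilde\beta_{k-1}(\Delta_{V(\Delta)\setminus W})$, so the statement is equivalent to: every induced subcomplex $\Gamma$ of $\dP$ satisfies $\tilde\beta_{k-1}(\Gamma)\le g_k(\Delta)$ for all $k\le\fldd$; the case $k=0$ is immediate.)

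To build the injection I would use Lee's theory of generalized stresses \cite{MR1384883}, which identifies $\mathcal S_k$ with the space of primitive generalized $k$-stresses of the generic realization and provides an AL-type pairing between simplicial chains of $\Delta$ and these stresses. Given a nontrivial class $[z]\in\tilde H_{d-k-1}(\Delta_W)$, represent it by a simplicial $(d-k-1)$-cycle $z$ carried by $W$. Since $|\Delta|$ is a $(d-1)$-sphere and $d-k-1<d-1$, the cycle $z$ bounds in $\Delta$, say $z=\partial c$; feeding $c$ into the AL-pairing produces an element $\Psi(z)\in\mathcal S_k$. Distinct fillings $c$ differ by a $(d-k)$-cycle of $\Delta$, which (for $k\ge2$) is a $(d-k)$-boundary because $\tilde H_{d-k}(\Delta)=0$, and the AL-image of such a boundary is controlled by $\omega$-multiplication; this makes $\Psi$ well-defined, and, together with additivity and the fact that a cycle bounding \emph{inside} $\Delta_W$ admits a filling carried by $W$, it yields a well-defined linear map $\overline\Psi\colon\tilde H_{d-k-1}(\Delta_W)\to\mathcal S_k$. (The case $k=1$ requires the harmless extra step of quotienting out the fundamental class of $\Delta$.)

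The core of the proof --- and the step I expect to be the real obstacle --- is the injectivity of $\overline\Psi$: one must show that a $(d-k-1)$-cycle $z$ carried by $W$ with $\Psi(z)=0$ (equivalently, whose stress is ``non-primitive'') is already null-homologous inside $\Delta_W$. The idea is that multiplication by the generic linear form $\omega$ realizes a generic cone/suspension operation on stresses, so that a non-primitive stress is inflated from a lower-dimensional stratum; peeling off this inflation and restricting back to the vertex set $W$ should exhibit an explicit chain in $\Delta_W$ with boundary $z$. Carrying this out rigorously is precisely the assertion that the primitive part of the stress space detects the reduced homology of induced subcomplexes, which is the substance of the higher (toric) chordality theory developed in \cite{AdipII} on top of \cite{ANS15}; this is where the weight of the argument lies. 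Once injectivity is in hand, $\tilde\beta_{d-k-1}(\Delta_W)=\dim_{\mathbb{Q}}\tilde H_{d-k-1}(\Delta_W)\le\dim_{\mathbb{Q}}\mathcal S_k=g_k(\Delta)$, which is the bound of Theorem~\ref{thm:QLBT}.
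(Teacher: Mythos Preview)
The paper does not give its own proof of Theorem~\ref{thm:QLBT}; the result is quoted from \cite{AdipII}, where it is proved by a combinatorial Morse-theoretic argument, not by the stress-space framework you set up. For self-containedness the paper instead supplies the weaker Lemma~\ref{lem:QLBT}, which treats only homology classes represented by an embedded simplicial $(k-1)$-sphere $\gamma$ and produces a single $k$-stress homologous to $[\gamma]$ via a \v{C}ech/Mayer--Vietoris argument on the cover $\{\St_\Delta^\circ v\}_{v\in\gamma}$, together with McMullen's blowup/blowdown bookkeeping to reduce to the case where $\gamma$ is induced.

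Your proposal has a genuine gap precisely where you flag it. The construction of $\overline\Psi$ is already vague: ``feeding $c$ into the AL-pairing'' is not specified, and a $(d-k)$-chain $c$ most naturally lands in degree $d-k+1$ of $A$, not in $\mathcal S_k\subset A_{d-k}$; you never explain why the output is annihilated by $\omega$. But the decisive issue is injectivity. You write that establishing it ``is the substance of the higher (toric) chordality theory developed in \cite{AdipII}'', which is circular, since \cite{AdipII} is the source of Theorem~\ref{thm:QLBT} itself. What remains after removing that appeal is a reformulation of the statement into the language of the Artinian reduction, not a proof. If you want a self-contained argument, the paper's Lemma~\ref{lem:QLBT} shows one route: restrict to spherical cycles (which is all the applications need), observe that the nerve of the vertex-star cover equals $\gamma$ when $\gamma$ is induced, and apply the Ishida/Tay--Whiteley stress chain complex to obtain a surjection from the stress space onto $\tilde H_{k-1}(\gamma)$---working in that direction sidesteps the injectivity problem entirely.
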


The proof uses a subtle approach via combinatorial Morse theory. We will therefore, for purposes of self-containedness, provide also a slightly weaker alternative lemma to the same effect based on the McMullen proof of the hard Lefschetz theorem, see Lemma~\ref{lem:QLBT}.

%The $f$-vector of a simplicial polytope can be expressed in terms of the $f$-vectors of vertex links. To state this result, define the short $h$-vector of $\Delta$ to be the vector ${\widetilde{h}} : = ({\widetilde{h}}_0, {\widetilde{h}}_1, \dots, {\widetilde{h}}_{d-1})$, where \begin{equation}\label{shorth1}{\widetilde{h}}_i = \sum_{v\in V(\Delta)} h_i(\lk(v)).\end{equation} It is a lemma of \cite{mcmullen1970} that  \begin{equation}\label{shorth2}{\widetilde{h}}_{j-1}:= jh_{j} + (d+1-j)h_{j-1}\,\,\, \text{for all} \, 1\le j \le d.\end{equation}

%Similarly, we can define the short $g$-vector ${\widetilde{g}} = ({\widetilde{g}_0}, {\widetilde{g}_1}, \dots, {\widetilde{g}_{d-1}})$ by ${\widetilde{g}_0} := {\widetilde{h}_0}$ and ${\widetilde{g}_i} := {\widetilde{h}_i} - {\widetilde{h}_{i-1}}$ for $i=1, \, 2, \, \dots, \, d-1$. We then have \begin{equation}\label{shortg1}{\widetilde{g}}_i= \sum_{v\in V(\Delta)} g_i(\lk(v)).\end{equation}
%It is pointed out in \cite{MR1322063} that:
%\begin{equation}\label{shortg2}{\widetilde{g}}_{j-1}:= jg_{j} + (d+1-j)g_{j-1}\,\,\, \text{for all} \, 1\le j \le d.\end{equation}

\subsection{Framework rigidity}
Let $G = (V,E)$ be a graph and let $\varphi: V \to \R^d$ be any map. We say that $\varphi$ is \emph{rigid} if there exists $\varepsilon > 0$ such that if $\varphi': V \to \R^d$ is such that $|\varphi(v) - \varphi'(v)| < \varepsilon$ for any $v\in V$
and $|\varphi(v) - \varphi(w)| = |\varphi'(v)-\varphi'(w)|$ for every $\{w,v\} \in E$, then $|\varphi(v) - \varphi(w)| = |\varphi'(v)-\varphi'(w)|$ for every $\{w,v\} \in \binom V2$. We say that $\varphi$ is \emph{flexible} if it is not rigid.

The set of all maps $V\to \R^d$ forms a $d\cdot|V|$-dimensional vector space over $\R$ which can be endowed with the Lebesgue measure. A graph $G$ is \emph{generically $d$-rigid} if almost every map $\varphi: V\to \R^d$ is rigid and \emph{generically $d$-flexible} if almost every such map is flexible. It is known that every graph is either generically $d$-rigid or generically $d$-flexible.

Fix a vertex set $V$ and consider the family ${\cal R}(V,d) \subseteq 2^{\binom V2}$ of all the minimal under inclusion edge sets $E$ such that $G=(V,E)$ is a generically $d$-rigid graph. The collection ${\cal R}(V,d)$ is the set of bases of a matroid. In particular, the cardinality of any element of ${\cal R}(V,d)$ is an invariant denoted by $\rho(V,d)$.
%A graph $G=(V, E)$ is called $d$-acyclic if $E\in {\cal R}(V, d)$.

Let $G= (V,E)$ be graph and let $\varphi:V\rightarrow \mathbb{R}^d$ be a map. A \emph{stress}, w.r.t. $(G,\varphi)$, is a map $\omega: E \to \mathbb{R}$ such that for every vertex $v$:
\begin{equation}\label{eqn:stress} \sum_{u:\ \{u,v\}\in E} \omega(\{u,v\})(\varphi(u)-\varphi(v)) = 0. \end{equation}
The family of stresses  of $(G, \varphi)$ is a vector space;
if $\varphi$ is generic and $G$ is generically $d$-rigid then this stress space has dimension $|E| - \rho(V, d)$.

Kalai \cite{Kalai-LBT} observed that for $d\ge 3$ the graph of a simplicial $d$-polytope $P$ is generically $d$-rigid, and used it to prove that the dimension of the stress space of this graph equals $g_2(\partial P)$. This provides an alternative proof of the lower bound theorem of Barnette \cite{Barnette:LBT-73}, where the minimizers are those $P$ with $g_2(\partial P)=0$. Kalai also showed that, for $d\ge 4$, $g_2(\partial P)=0$ if and only if
$P$ is \emph{stacked}, namely it can be obtained from the $d$-simplex by repeatedly stacking a $d$-simplex over a facet of the polytope already constructed. Further, for $d\ge 5$ this happens if and only if every vertex link is stacked.

\subsection{The Hausdorff metric}
For a point $x \in \R^d$ and $A\subset \R^d$ define $d(x, A) := \inf_{a\in A} |x-a|$ to be the distance from $x$ to $A$ in the usual Euclidean metric.
%Based on this definition of the distance between a point and a set we can define a metric on the space of compact subsets of $\R^d$.
Let $A, B$ be two bounded subsets of $\mathbb{R}^d$. Define the Hausdorff distance between $A$ and $B$ by:
\begin{equation*}\dH(A,B):= \max\left\{ \sup_{a\in A}d(a,B), \, \, \sup_{b\in B} d(b,A)\right\}.\end{equation*}
It is easy to verify that $\dH$ defines a metric on the space of compact subsets of $\R^d$, and thus restricts to a metric on the space of convex bodies in $\R^d$.

\subsection{Approximation theory}
\label{subsec:approx}
Every convex body $K$ can be approximated by polytopes in the Hausdorff metric. A natural question is what is the minimal number of vertices that achieves an approximation of distance $\varepsilon$. Assume that $K$ is of type $C^1$. Let $n(\varepsilon)$ be the minimal number of vertices of a polytope $P$ with $\delta^H(P,K) < \varepsilon$. It is clear that $n(\varepsilon)$ goes to infinity as $\varepsilon$ goes to $0$.

 If $K$ is $C^2$ then the asymptotic behavior  of $n(\varepsilon)$ is well understood. B\"or\"oczky \cite[Theorem A(9)]{MR1770932} computed the asymptotic growth of $n(\varepsilon)$ explicitly, as follows:
 %%%%%%%%%%%%%%%
\begin{thm}\label{thm:Boro} If $K$ is a $C^2$-convex body then: \begin{equation}\label{eqn:boro} \lim_{\varepsilon \to 0} n(\varepsilon)\varepsilon^{(d-1)/2} = 4^{\frac{1-d}2}\frac{\Theta_{d-1}}{V_{d-1}} \int_{\dK} \sqrt{\kappa} d(\dK),  \end{equation}
where $V_d$ is the volume of the unit $d$-ball, $\Theta_d$ is the covering density of $\mathbb{R}^d$ by unit $d$-balls, and $\kappa$ is the Gauss curvature.
\end{thm}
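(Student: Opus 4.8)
I would prove \eqref{eqn:boro} by establishing matching asymptotic upper and lower bounds for $n(\varepsilon)$, both obtained by reducing, via the $C^2$-hypothesis, to a single \emph{local model problem}: the optimal polytopal approximation of the standard paraboloid, whose solution is controlled by the thinnest coverings of $\R^{d-1}$ by balls. This is where the constant $\Theta_{d-1}$ enters, while the Gauss curvature appears as a Jacobian. First I would localize: cover $\dK$ by finitely many coordinate patches and, on each, write $\dK$ as the graph of a convex $C^2$ function $f$ over its tangent plane with $f(0)=0$, $\nabla f(0)=0$, and $\nabla^2 f(0)$ equal to the second fundamental form at the base point, whose determinant is $\kappa$. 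Since $f$ is $C^2$, on a patch of diameter $\delta$ it agrees with its osculating paraboloid $v\mapsto\tfrac12\langle v,\nabla^2 f(0)v\rangle$ up to an error $o(\delta^2)$, uniformly; this lets me replace $\dK$, patch by patch, by a quadric. Next I would normalize: diagonalizing the second fundamental form gives principal curvatures $\lambda_1,\dots,\lambda_{d-1}>0$, and the affine map of $\R^d$ scaling the $i$-th tangent direction by $\sqrt{\lambda_i}$ and fixing the normal turns the osculating paraboloid into $w\mapsto\tfrac12|w|^2$, preserves normal distances (hence, to leading order, the Hausdorff error), carries vertices to vertices, and multiplies $(d-1)$-area in the base plane by the Jacobian $\sqrt{\lambda_1\cdots\lambda_{d-1}}=\sqrt\kappa$. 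So the vertices needed over a patch equal those needed for $z=\tfrac12|w|^2$ over a base region of area $\sqrt\kappa\cdot(\text{original area})$.

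\textbf{The model problem.} For the standard paraboloid, a hyperplane cutting a cap whose orthogonal projection to the base plane has circumradius $R$ produces a cap of depth exactly $\tfrac12 R^2$: if $\ell$ is the affine interpolant through the $d$ facet vertices, with circumcenter $c$ and circumradius $R$ of their projections, then $\tfrac12|w|^2-\ell(w)=\tfrac12|w-c|^2-\tfrac12 R^2$, which on the projected facet ranges over $[-\tfrac12R^2,0]$. Translating the facet hyperplane outward by $\tfrac14R^2$ to bisect the cap reduces the two-sided deviation to $\tfrac14R^2$. Hence $\dH\le\varepsilon$ is achieved, asymptotically optimally, precisely when the projected facets are the cells of a covering of the base plane by balls of radius $2\sqrt\varepsilon$; the minimal number of vertices per unit base area is therefore the minimal density of points whose radius-$(2\sqrt\varepsilon)$ balls cover, namely $\Theta_{d-1}\big/\!\left(V_{d-1}(2\sqrt\varepsilon)^{d-1}\right)=\Theta_{d-1}\big/\!\left(V_{d-1}2^{d-1}\varepsilon^{(d-1)/2}\right)$. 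Only near-periodic, almost-optimal coverings are needed here; one then lets their covering ratio tend to $1$.

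\textbf{Assembling the bounds.} For the upper bound, on each patch I would transport a scaled near-optimal periodic covering back through the normalization, smoothing the configuration across patch interfaces, and sum the vertex counts $\sum_{\text{patches}}\Theta_{d-1}\sqrt\kappa\,(\text{area})\big/\!\left(V_{d-1}2^{d-1}\varepsilon^{(d-1)/2}\right)$; letting patch size $\to 0$ gives $n(\varepsilon)\varepsilon^{(d-1)/2}\le(1+o(1))\,4^{(1-d)/2}\tfrac{\Theta_{d-1}}{V_{d-1}}\int_{\dK}\sqrt\kappa\,d(\dK)$, using $2^{d-1}=4^{(d-1)/2}$. For the lower bound, given any $P$ with $\dH(P,K)<\varepsilon$ — which, after a controlled perturbation, may be taken simplicial with vertices near $\dK$ — I would assign to each facet the portion of $\dK$ it approximates; the same cap-depth identity, now read as an inequality, forces each such footprint to have $(d-1)$-area at most $(1+o(1))V_{d-1}2^{d-1}\varepsilon^{(d-1)/2}\big/(\Theta_{d-1}\sqrt\kappa)$ after normalization, so the number of facets, hence of vertices up to the same asymptotics, is at least $(1-o(1))\,4^{(1-d)/2}\tfrac{\Theta_{d-1}}{V_{d-1}}\varepsilon^{-(d-1)/2}\int_{\dK}\sqrt\kappa\,d(\dK)$. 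Combining the two bounds and sending $\varepsilon\to0$ yields \eqref{eqn:boro}.

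\textbf{Where the difficulty lies.} The delicate point is making the localization genuinely uniform: one must run a double limit with patch diameter $\delta=\delta(\varepsilon)\to0$ slower than the cell scale $\sim\sqrt\varepsilon$, so that (i) the $o(\delta^2)$ deviation of $\dK$ from its osculating paraboloid, (ii) the oscillation of $\kappa$ within a patch, and (iii) the vertices wasted near patch interfaces are all of strictly lower order than the main term $\varepsilon^{-(d-1)/2}$. On the lower-bound side one must further ensure that no global feature of $P$ can beat the local covering constant — i.e. that the facet footprints genuinely behave, in the limit, like cells of a covering of $\R^{d-1}$ by equal balls — which is the technical heart of B\"or\"oczky's argument and the place where the extremal properties of thin coverings (an isoperimetric-type statement for caps of the paraboloid) are invoked.
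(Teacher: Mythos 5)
The paper does not prove Theorem~\ref{thm:Boro} at all: it is quoted from B\"or\"oczky \cite[Theorem A(9)]{MR1770932} and used as a black box (the only feature needed downstream is that the right-hand side of \eqref{eqn:boro} is positive and finite). So there is no internal proof to compare against. What you have written is, in outline, the standard strategy behind B\"or\"oczky's theorem (building on Schneider and Gruber): localize to the osculating paraboloid, normalize by the second fundamental form so that $\sqrt{\kappa}$ enters as a Jacobian, and reduce to the economy of coverings of $\R^{d-1}$ by congruent balls, with the midpoint shift of the facet hyperplanes responsible for the factor $4^{(1-d)/2}$ in place of $2^{(1-d)/2}$. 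Your constant is right (for $d=2$ and the unit circle it gives $n(\varepsilon)\sim\tfrac{\pi}{2}\varepsilon^{-1/2}$, matching the optimal regular polygon), and you correctly isolate the double limit and the extremal covering step as the hard points.

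Two caveats. First, in your lower bound the inference ``the number of facets, hence of vertices up to the same asymptotics'' is not valid: a simplicial $d$-polytope may have many more facets than vertices, so a lower bound on $f_{d-1}$ does not transfer to $f_0$ at the same order. The repair is to argue about vertices directly: your cap-depth identity forces every facet's projected circumradius to be at most $(1+o(1))\,2\sqrt{\varepsilon}$, and since any point of a simplex with circumradius $R$ is within distance $R$ of one of its vertices (from $\sum_i t_i|p-w_i|^2=R^2-|p|^2$ when the circumcenter is at the origin), the \emph{vertex} projections form a covering of the base region by balls of radius $(1+o(1))\,2\sqrt{\varepsilon}$, which is what $\Theta_{d-1}$ then counts. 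Second, the extremal statement you defer to --- that no configuration of footprints can beat the covering density $\Theta_{d-1}$, uniformly as the patch diameter and $\varepsilon$ go to $0$ in the right order --- is exactly the content one must import from \cite{MR1770932}; as written, your text is an accurate plan rather than a complete proof. Given that the paper itself treats the theorem as a citation, that level of detail is appropriate for its role here, but the facet-to-vertex step should be corrected as above.
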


For our purposes, the important property of equation~(\ref{eqn:boro}) is that the right-hand side is strictly bigger than $0$ and bounded. In particular $n(\varepsilon)$ behaves roughly like $\varepsilon^{-\frac{d-1}2}$ for small enough $\varepsilon$.
%%------------------

\section{Warm up: rigidity and Kalai's conjecture for the unit $4$-ball}
\label{sec:g_2}
This section is devoted to proving Kalai's conjecture for simplicial $4$-dimensional polytopes approximating the unit $4$-ball, using rigidity theory. We then vastly generalize the ideas demonstrated here to prove the general case in the next section.

As mentioned in Subsection~\ref{subsec:approx}, Conjecture~\ref{Kalai}(i) holds for $k=1$ (for any $d$), so the first open case of this conjecture is $k=2,d=4$, and the most basic $C^1$-convex body to consider is the unit $4$-ball. For the rest of the section, the support of a stress $\omega$ of an embedded graph $G$ is the set of vertices that belong to an edge $e$ of $G$ such that $w(e) \not = 0$.

\begin{lemma}\label{lem:links2} Let $P$ be a generically embedded simplicial $4$-polytope and let $v$ be a vertex of $P$. Assume that $\lk(v)$ is not stacked. Then there is a non-zero stress $w$ supported in $N_2(v):=\{u\in V(P):\ d(u,v)\le 2\ \textrm{in the graph metric}\}$.
\end{lemma}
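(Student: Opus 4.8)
The plan is to transfer a stress living on $\lk(v)$ to a stress on the graph of $P$ supported near $v$. First I would use Kalai's rigidity results: since $\lk(v)$ is the boundary complex of a simplicial $3$-polytope $Q$ that is \emph{not stacked}, we have $g_2(\partial Q) > 0$, so the graph of $Q$, generically embedded in $\R^3$, carries a non-zero stress $\omega_Q$ on its edge set. The edges of $\lk(v)$ correspond to $2$-faces of $P$ containing $v$, and the vertices of $\lk(v)$ are exactly the neighbors of $v$ in the graph of $P$, so the support of $\omega_Q$ (a set of vertices of $\lk(v)$) sits inside $N_1(v) \subseteq N_2(v)$. The issue is that $\omega_Q$ is a stress with respect to the \emph{$3$-dimensional} embedding of $\lk(v)$ (which one should think of as living in the vertex figure hyperplane), not with respect to the $4$-dimensional embedding $\varphi$ of $P$; the equilibrium condition \eqref{eqn:stress} in $\R^4$ has one more coordinate to kill.

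The key step is the standard cone/suspension lemma for stresses: if $\psi$ is a generic embedding of a simplicial complex $\Sigma$ in $\R^{d-1}$ and $w$ a stress of $(\operatorname{graph}(\Sigma), \psi)$, then after coning with a new apex placed generically in $\R^d$ (so that $\Sigma$ is realized as the link of the apex), $w$ extends uniquely to a stress of the coned complex in $\R^d$ in which the apex is assigned stress coefficients on the cone edges that compensate for the extra coordinate. Concretely, one projects the $4$-dimensional picture: place $v$ at the origin, and for each neighbor $u$ write $\varphi(u) = \lambda_u x_u$ where $x_u$ lies in an affine hyperplane not through $v$ that realizes $\lk(v)$; the given stress $\omega_Q$ on the $x_u$'s can be rescaled (dividing $\omega_Q(\{u,u'\})$ by the appropriate product of $\lambda$'s, as in Lee's or Kalai's treatment of stresses of stacked/coned polytopes) to a family of coefficients on the edges $\{u,u'\}$ of $\operatorname{star}_P(v)$ that satisfies equilibrium at every neighbor $u$ of $v$ in the directions spanning the hyperplane, and then one solves for the coefficients $\omega(\{v,u\})$ on the edges through $v$ to absorb the residual component and enforce equilibrium at $v$ itself. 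Equilibrium at every vertex outside $N_1(v)$ is automatic since no edge with nonzero coefficient touches it. The support of the resulting stress is contained in $N_1(v) \cup \{v\} \subseteq N_2(v)$, and it is non-zero because $\omega_Q \neq 0$ already forces some edge $\{u,u'\}$ with $u,u' \neq v$ to have non-zero coefficient.

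The main obstacle I anticipate is the genericity bookkeeping: we are handed a \emph{generic} embedding $\varphi$ of $P$ in $\R^4$, and we need the induced data on $\lk(v)$ — the points $x_u$ in the vertex-figure hyperplane — to be generic enough that Kalai's stress (which exists for \emph{almost every} embedding of the graph of $Q$ in $\R^3$) is actually available, and simultaneously that the linear system determining the $\omega(\{v,u\})$ is solvable. Both are open dense conditions, so a generic $\varphi$ works; the care is in checking that the genericity of $\varphi$ in $\R^4$ projects to genericity of the vertex figure, which follows because the projection from $v$ is a rational map and a generic point maps to a generic point off a proper subvariety. An alternative, cleaner route that sidesteps some of this is to invoke the cone lemma in its coordinate-free form: $\operatorname{star}_P(v)$ is a cone over $\lk_P(v)$ with apex $v$, and the stress space of a cone in $\R^d$ over a complex generically embedded in a hyperplane maps onto the stress space of that complex in $\R^{d-1}$ (with kernel the stresses not involving the apex), so non-vanishing of $g_2(\lk(v))$ directly yields a non-zero cone stress supported on $\operatorname{star}_P(v) \subseteq N_2(v)$.
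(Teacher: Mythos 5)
Your proposal breaks down at its very first step, and the error is fatal rather than a bookkeeping issue. You claim that since $\lk(v)$ is the boundary of a non-stacked simplicial $3$-polytope $Q$, one has $g_2(\partial Q)>0$ and hence a non-zero stress on the graph of $Q$ generically embedded in $\R^3$. But every simplicial $3$-polytope satisfies $f_1=3f_0-6$ (Euler's relation plus $3f_2=2f_1$), so $g_2(\partial Q)=f_1-3f_0+6=0$ for \emph{every} simplicial $3$-polytope, stacked or not. Equivalently, the graph of any triangulated $2$-sphere is generically isostatic in $\R^3$: it is generically $3$-rigid \emph{and} generically stress-free. So there is no stress $\omega_Q$ to cone, and the "cleaner route" at the end of your proposal fails for the same reason. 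This is exactly why the lemma's conclusion speaks of $N_2(v)$ rather than $N_1(v)$: the stress cannot in general live on the edges of $v*\lk(v)$ alone, and must be created using edges of $\partial P$ that are \emph{not} in the closed star of $v$.

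The correct use of non-stackedness in dimension $3$ is combinatorial, not enumerative: a simplicial $3$-polytope is stacked if and only if its graph is chordal, so non-stackedness of $\lk(v)$ yields an induced cycle $C=v_1,\dots,v_m$ in $\lk(v)$ with $m\ge 4$. The paper then splits into two cases. If $C$ is not induced in $\partial P$, some edge $e=\{v_i,v_j\}$ of $\partial P$ is missing from $\lk(v)$; since the graph of $v*\lk(v)$ is generically $4$-rigid by the Cone Lemma, adding $e$ to this rigid graph forces a non-zero stress supported in $N_1(v)$. If $C$ is induced in $\partial P$, one glues the stars of $v_3,\dots,v_m$ to get a generically $4$-rigid graph not containing the edge $\{v_1,v_2\}$, and adding that edge (which is present in $\partial P$, as $v_1v_2$ is an edge of $C\subseteq\lk(v)$) again forces a stress, now supported in $N_2(v)$. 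In both cases the stress arises from an \emph{extra edge added to a rigid graph}, which is the mechanism your proposal is missing.
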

\begin{proof}
We follow ideas of Kalai~\cite{Kalai-LBT}.
Recall that a simplicial $3$-polytope is stacked if and only if its $1$-skeleton is \emph{chordal}, cf.\ \cite[Theorem 8.5]{Kalai-LBT}, namely, all its induced cycles have length $3$. As $\lk(v)$ is not stacked, there exists an induced cycle $C = v_1, \dots v_m$ of $\lk(v)$ with $m\ge 4$. There are two cases to consider:
\begin{enumerate}
\item[i.] $C$ is not induced in $\partial P$. Then for some $1\le i<j\le m$ there is an edge $e=\{v_i, v_j\}$ in $\partial P$ that is not in $\lk(v)$. By the Cone Lemma in rigidity, cf.\ \cite[Theorem 5]{Whiteley:cones}, as the graph ($1$-skeleton) of $\lk(v)$ is generically $3$-rigid, the graph $G$ of $v*\lk(v)$ is generically $4$-rigid. Thus, $G \cup\{e\}$ supports a nonzero stress $w$. The vertex support of $w$ is contained in the vertices of $G \cup\{e\}\subseteq N_1(v)$, thus also in $N_2(v)$.
\item[ii.] $C$ is induced in $\partial P$. Consider the complex $\Delta = \bigcup_{i=3}^m v_i*\lk(v_i)$. By the Gluing Lemma in rigidity, cf.\ \cite[Theorem 2]{Asi-Roth2}, the graph $G=\Delta^{(1)}$ is generically $4$-rigid (as all the cones are, and $v_i*\lk(v_i) \cap v_{i+1}*\lk(v_{i+1})$ contains a tetrahedron so this intersection has at least $4$ vertices). The edge $e=\{v_1, v_2\}$ is not an edge of $\Delta$, but both $v_1$, $v_2$ are vertices of $\Delta$. Thus the given embedding of $G \cup \{e\}$ has a nonzero stress. This stress is supported in $N_2(v)$ as desired.
\end{enumerate}
\end{proof}

\begin{thm}\label{thm:d=4}
Kalai's Conjecture~\ref{Kalai}(i) holds for the unit $4$-ball.
\end{thm}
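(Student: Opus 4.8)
The plan is to prove the stronger statement that for every fixed $N$ one has $g_2(P_n)\ge N$ once $n$ is large; since $N$ is arbitrary this gives $g_2(P_n)\to\infty$. Write $\varepsilon_n:=\dH(P_n,\B)\to 0$. As $g_2$ is an invariant of the boundary complex, a small generic perturbation of the vertices lets us assume that each $P_n$ is generically embedded (the combinatorial type of a simplicial polytope is unchanged under small perturbations of its vertices), with $\varepsilon_n\to 0$ unchanged up to lower order. The idea is to produce $N$ stresses of the graph of $P_n$, one supported in a tiny neighborhood of each of $N$ well-separated points of $\Sp^3$; since their supports will be disjoint the stresses are linearly independent, and by Kalai's identity the dimension of the stress space is exactly $g_2(P_n)$.

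Two ingredients are needed. The first is an elementary length estimate: every edge $\{a,b\}$ of $P_n$ has length $O(\sqrt{\varepsilon_n})$. Indeed its midpoint lies on $\partial P_n$, hence within $O(\varepsilon_n)$ of the unit sphere, while $a$ and $b$ lie within $O(\varepsilon_n)$ of the unit ball; together with $|a-b|^2=2|a|^2+2|b|^2-4\,\big|\tfrac{a+b}2\big|^2$ this gives the bound, and hence vertices at graph distance $\le k$ are at Euclidean distance $O(k\sqrt{\varepsilon_n})$. The second, and main, ingredient is the geometric claim: for every $p\in\Sp^3$ and every $\rho>0$, once $n$ is large $P_n$ has a vertex within Euclidean distance $\rho$ of $p$ whose link is \emph{not} stacked. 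Granting it, fix $N$, choose $p_1,\dots,p_N\in\Sp^3$ pairwise at distance $>3\rho$, and for $n$ large pick for each $i$ a vertex $v_i$ with non-stacked link and $|v_i-p_i|<\rho$. Then the $v_i$ are pairwise at Euclidean distance $>\rho$, hence at graph distance $>4$ by the length estimate, so the sets $N_2(v_i)$ are pairwise disjoint. Lemma~\ref{lem:links2} provides, for each $i$, a non-zero stress supported in $N_2(v_i)$; having pairwise disjoint edge-supports, these $N$ stresses are linearly independent, and therefore $g_2(P_n)\ge N$.

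What remains, and what is genuinely the hard part, is the geometric claim. Suppose it fails: for some fixed $p$ and infinitely many $n$, every vertex of $P_n$ within $\rho$ of $p$ has a stacked link. Let $\Sigma_n$ be the portion of $\partial P_n$ lying within $\rho/2$ of $p$, taken as a full subcomplex containing the closed stars of its interior vertices; then $\Sigma_n$ is a triangulated $3$-ball in which every interior vertex has a stacked $2$-sphere link, its edges have length $O(\sqrt{\varepsilon_n})$, and it lies within $\varepsilon_n$ of the strictly convex spherical cap about $p$ of fixed diameter. The route I would take is: (i) show that a triangulated ball all of whose interior vertex links are stacked spheres is itself a stacked ball — a local analogue of Kalai's $g_2=0\Rightarrow$ stacked, obtained by repeatedly inverse-stacking a minimum-degree interior vertex; (ii) use that the stacking move never subdivides an edge, so $\Sigma_n$ is grown from a five-vertex seed whose $\binom 52$ edges survive all moves and are therefore $O(\sqrt{\varepsilon_n})$, forcing the seed into a Euclidean ball $D$ of radius $O(\sqrt{\varepsilon_n})$; and (iii) project $\Sigma_n$ onto the tangent hyperplane of $\Sp^3$ at $p$ and show — using that each stacking step places its new vertex only $O(\varepsilon_n)$ beyond an existing facet, since it must remain in the $\varepsilon_n$-slab around $\Sp^3$ — that the shadow of $\Sigma_n$ stays within an $o(\rho)$-neighborhood of the (tiny) shadow of $D$, contradicting that it must cover a region of size comparable to $\rho$. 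The main obstacle is step (iii): controlling the accumulation of the shadow error along the (possibly deep) stacking tree, together with the "locally stacked $\Rightarrow$ stacked ball" reduction in (i); one expects the general $C^1$ argument of the next section to replace these hands-on estimates by something more robust.
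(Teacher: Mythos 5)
The first half of your argument is correct and is exactly the paper's: short edges ($O(\sqrt{\varepsilon_n})$, via the supporting-hyperplane observation that boundary points of $P_n\subseteq\B$ lie within $\varepsilon_n$ of $\Sp^{3}$), Lemma~\ref{lem:links2} producing a stress in $N_2(v)$ for each vertex $v$ with non-stacked link, and linear independence of stresses with disjoint supports. The gap is in the ``geometric claim,'' which is indeed the heart of the theorem, and your proposed route (i)--(iii) for it does not go through. Step (i) is false as stated: a triangulated $3$-ball all of whose \emph{interior} vertex links are stacked need not be stacked --- take any non-stacked $3$-ball with no interior vertices at all (e.g.\ the boundary complex of a cyclic $4$-polytope minus one facet), for which your hypothesis is vacuous. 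Worse, the difficulty is not only about boundary vertices: the implication ``all vertex links stacked $\Rightarrow$ stacked'' is precisely the part of Kalai's theorem that requires $d\ge 5$; in the present dimension ($2$-sphere links inside a $3$-sphere) locally stacked does not imply stacked, so no amount of care about the boundary will rescue (i). Step (iii) you already flag as open, and without (i) there is no stacking tree to analyze.

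The paper circumvents all of this by never claiming the cap is stacked. For each vertex $v$ with stacked link it takes the \emph{unique} stacked triangulation (no new vertices or edges) of the solid $3$-ball bounded by $\lk(v)$, cones it with $v$ to get a complex $\Sigma_v$ of $4$-simplices, and shows the union $\Sigma=\bigcup_v\Sigma_v$ is a geometric simplicial complex (uniqueness of the stacked triangulation makes the pieces agree on overlaps). Lemma~\ref{lem:links} then shows $|\Sigma|$ covers $P\cap H$ for the hyperplane $H$ cutting the cap, hence contains a point $x$ at distance $\ge\delta$ from $\partial\B$. The $4$-simplex of $\Sigma$ containing $x$ has all edges of length $<\delta/2$, so its vertices --- which lie on $\partial P$ --- are at distance $>\delta/2$ from $\partial\B$; a supporting hyperplane of $P$ at such a vertex then exhibits a point of $\B$ at distance $>\delta/2$ from $P$, contradicting $\dH(P,\B)<\varepsilon_2$. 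If you want to complete your write-up, you should replace (i)--(iii) by this filling argument (or find some other way to exploit stackedness of the links that does not pass through ``locally stacked $\Rightarrow$ stacked'').
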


\begin{proof}
Assume by contradiction that $g_2(P_n)\le g-1$ for all $n$, for some positive integer~$g$. Let $B$ denote the unit $4$-ball.

Then there exist $\varepsilon_1>0$ and $g$ oriented hyperplanes $H_1, \dots, H_{g}$ that intersect the interior of $B$ such that the corresponding negative sides intersected with $B$ are far from each other: $0 \not\in H_i^-$ for $1\le i \le g$ and for any $1\le i<j\le g$, $\varepsilon_1<\min\{d(x,y):\ x\in H^{-}_i \cap B,\  y\in H^{-}_j \cap B\}$.
Let $\delta=\min_{1\le i\le g} (1-d(0, H_i))$.

If a simplicial polytope $P$ well-approximates $B$, then all its edges must be short. Specifically, there exists $\frac{\delta}{2}>\varepsilon_2>0$ such that if $\delta^H(P,B)<\varepsilon_2$ then all edges of $P$ have length $<\min\{\frac{\delta}{2},\frac{\varepsilon_1}{4}\}$. (A quantitative estimate will be given in Section~\ref{sec:C^2}, when we compute effective lower bounds on the $g$-numbers for $C^2$-convex bodies.)

By rescaling and slightly moving the vertices, w.l.o.g. we may assume the approximating polytopes $P_n$ are generically embedded and contained in $B$.
Now, for $P\subseteq B$ as above, if in each cap $H^{-}_i \cap B$ there is a vertex $v_i$ of $\partial P$ whose link is not stacked, then by Lemma~\ref{lem:links2} there is a stress $w_i$ supported in $N_2(v_i)$. By the choice of $\varepsilon_2$, for all $1\le i<j\le g$,  $N_2(v_i)\cap N_2(v_j)=\emptyset$, and thus the $g$ stresses $w_i$ are linearly independent, yielding $g_2(P)\ge g$, a contradiction. It follows that there is $1\le i\le g$ for which all vertices $v$ of $P$ in $H^{-}_i \cap B$ have stacked links.
Denote $H=H_i$.

\begin{figure}[htb]
\begin{center}
\includegraphics[scale = 0.5]{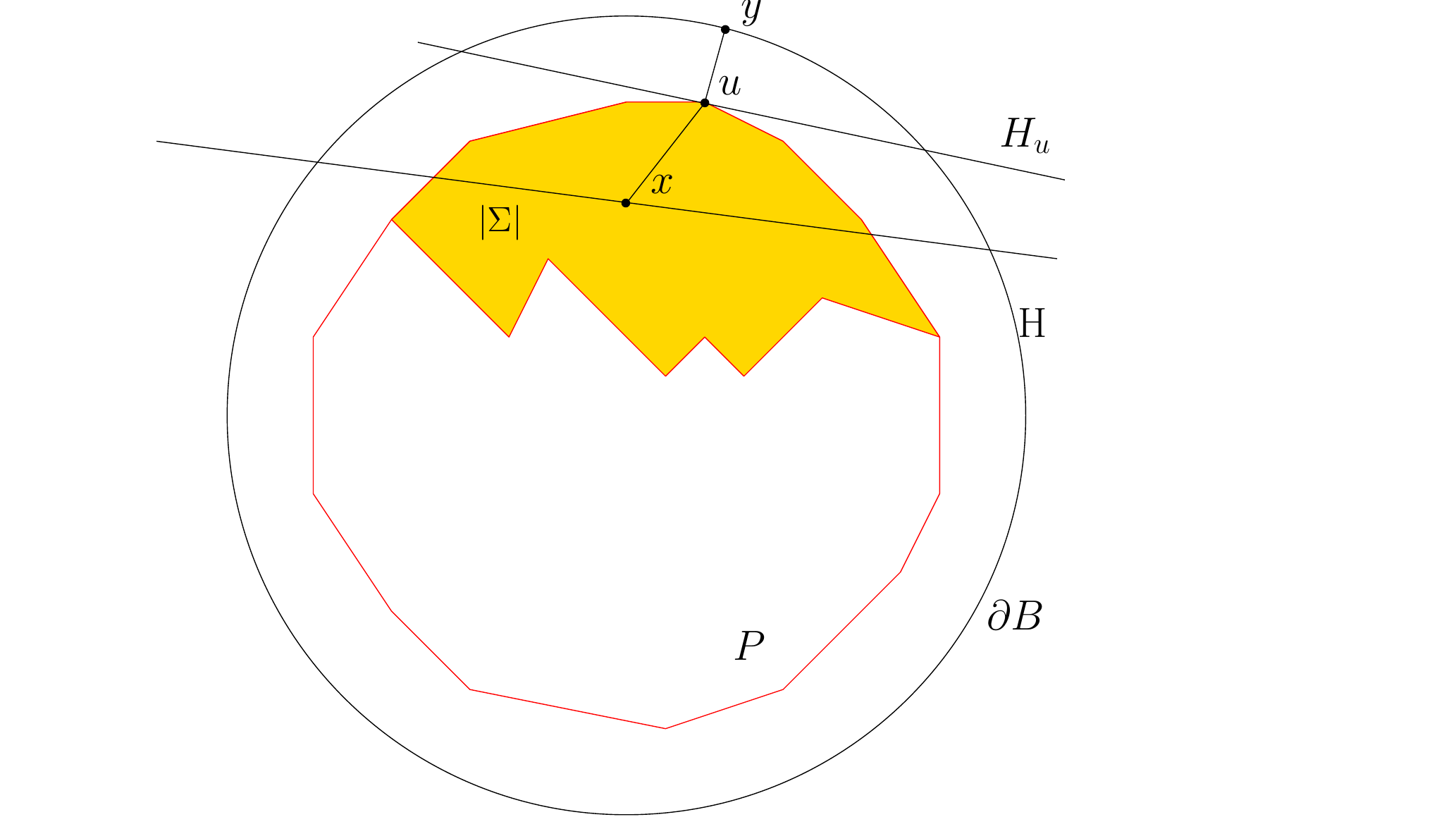}
\caption{$\delta^H(P,B) \ge d(y,u) \ge d(x, \partial B) - d(x,u) \ge \varepsilon_2$.}
\end{center}
\end{figure}

Note that for each such vertex $v$, $v*\lk(v)$ has a unique stacked triangulation $\Sigma_v$ (i.e., one without new vertices or edges). The family $\Sigma=\cup_{v\in V(P)\cap H^-}\Sigma_v$ is a geometric simplicial complex: indeed if $u$ and $v$ are vertices and $\Gamma$ is a simplex in $\Sigma_v$ that contains $u$ and  $v$, then the graph of $\Gamma$ is contained in $u*\lk(u)$ and by the uniqueness of the stacked triangulation $\Gamma \in \Sigma_u$ as well. In the next Lemma~\ref{lem:links} we will show that the geometric realization $|\Sigma|$ contains $H\cap P$ and thus has a point $x$ in the interior of some $4$-simplex $\sigma\in \Sigma$ with $d(x,\dB)\ge \delta$.

As all edges of $\sigma$ have length $<\frac{\delta}{2}$, all vertices of $\sigma$ are of distance $<\frac{\delta}{2}$ from $x$, and thus of distance $>\delta-\frac{\delta}{2}=\frac{\delta}{2}$ from $\dB$ by the triangle inequality. For a vertex $u\in\sigma$ and a supporting hyperplane $H_u$ of $P$ at $u$, the point $y$ in $\dB\cap H^-_u$ on the line orthogonal to $H_u$ through $u$ is of distance $>\frac{\delta}{2}>\varepsilon_2$ from $P$, a contradiction.
\end{proof}

\begin{lemma}\label{lem:links} Let $P$ be a simplicial $d$-polytope and let $H$ be a generic oriented hyperplane that passes through the interior of $P$. For each vertex $v$ of $P$ in $H^{-}$, let $\Sigma'_v$ be a triangulation of $\lk (v)$ and let $\Sigma_v$ be the collection of simplices formed by coning the simplices of $\Sigma'_v$ with $v$. Let $\Sigma \subset P$ be the family of all simplices of $\Sigma_v$ for all $v \in H^{-}$ and assume that it is a geometric simplicial complex. Then, for every point $x\in P\cap H$ there is a simplex $\Gamma \in \Sigma$ that contains $x$.
 \end{lemma}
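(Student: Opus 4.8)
The plan is to show that the subcomplex $\Sigma$ covers $P\cap H$ by a connectedness/clopen argument on the hyperplane slice. First I would observe that since $H$ is generic, the intersection $P\cap H$ is a $(d-1)$-polytope whose relative interior is connected, and every point $x$ in the relative interior of $P\cap H$ lies in the relative interior of the slice $\sigma\cap H$ for some facet $\sigma$ of $P$ (because $H$ meets no vertex and no lower-dimensional face transversally in a way that would leave $x$ on a boundary). Actually the cleaner route is: let $U=\{x\in P\cap H: x\in |\Gamma|\text{ for some }\Gamma\in\Sigma\}$; I want $U=P\cap H$. I would show $U$ is nonempty, and that it is both (relatively) open and closed in $P\cap H$, hence all of it by connectedness of $P\cap H$.

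For nonemptiness: pick any vertex $v$ of $P$ in $H^-$ — one exists because $H$ passes through the interior of $P$, so $H^-\cap P$ has interior and therefore contains a vertex of $P$ (a polytope is the convex hull of its vertices, and an open halfspace meeting its interior must contain one of them). Then $v*\lk(v)=\St_P(v)$ is a neighborhood of $v$ in $|\dd P|\cup\{$interior cone$\}$; more precisely $v*\lk(v)$ equals the closed star of $v$ in the ball $P$ and hence contains a neighborhood of $v$ in $P$. Since $v\in H^-$ and $H$ is generic, $H$ passes through the interior of this neighborhood (as $v$ is on the strictly negative side, nearby points straddle $H$), so $H$ meets the interior of $v*\lk(v)$, and $\Sigma_v$ triangulates $v*\lk(v)$; thus $U\neq\emptyset$.

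For closedness: $U$ is closed because it is a finite union of compact sets $|\Gamma|\cap H$. For openness, this is the main obstacle and the place where I expect to spend the real effort. Take $x\in U$, say $x\in|\Gamma|$ with $\Gamma\in\Sigma_v$, $v\in H^-$. I would argue that a whole relative neighborhood of $x$ in $P\cap H$ stays inside $|\Sigma|$. The key local fact is that near any vertex $v\in H^-$, the complex $|\Sigma|$ agrees with $v*\lk(v)$, which is a genuine neighborhood of $v$ in $P$; and more generally, if $x$ lies in a face $\Gamma$ of $\Sigma$ spanned (in its "top" coordinates) by vertices $v_0,\dots,v_j$ all of which... — here I need to be careful, since only the vertices of $P$ in $H^-$ get coned. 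The right statement is: the union $\bigcup_{v\in H^-} v*\lk(v)$ contains an entire neighborhood (in $P$) of $H^-\cap |\dd P|$ together with the region of $P$ swept by cones from $H^-$-vertices, and because $H^-\cap P$ is covered by the closed stars $\{v*\lk(v): v\in V(P)\cap H^-\}$ (every face of $P$ meeting $H^-$ has a vertex in $H^-$ since $H$ is generic, hence misses no such vertex — wait, one must check a facet crossing $H$ has a vertex strictly in $H^-$, which holds as $H$ contains no vertex), the set $\bigcup_{v\in H^-}\St_P(v)=\bigcup_{v\in H^-} v*\lk(v)$ is a neighborhood in $P$ of the closed set $\overline{H^-}\cap P \supseteq H\cap P$. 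Intersecting this open-in-$P$ neighborhood with $H$ shows $U$ is open in $P\cap H$. Combining, $U$ is nonempty, open and closed in the connected set $P\cap H$, so $U=P\cap H$, which is exactly the claim. The delicate points to nail down rigorously are (a) that every face of $P$ that meets $H$ on its negative side actually contains a vertex lying strictly in $H^-$ (genericity of $H$), and (b) that $\bigcup_{v}\,v*\lk(v)$ over $v\in V(P)\cap H^-$ really is an open neighborhood in $P$ of $\overline{H^-}\cap P$, for which one uses that the closed stars of vertices cover the ball $P$ and that a point $x\in P$ with every vertex-of-its-carrier-face in $H^-$ has its carrier's closed star as a neighborhood.
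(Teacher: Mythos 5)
The skeleton (nonempty, open and closed in the connected set $P\cap H$) is reasonable and is in spirit a $(d-1)$-dimensional version of what the paper does, but the openness step --- which you correctly flag as the main obstacle --- has a genuine gap, and it sits exactly where the content of the lemma lies. Your justification rests on the claims that ``the closed stars of vertices cover the ball $P$'' and that a point of $P$ has a ``carrier face'' whose closed star is a neighborhood. Neither is available here: $P$ is a solid $d$-polytope whose interior carries no triangulation (only $\partial P$ is a simplicial complex), and $|\Sigma_v|$ is the $d$-ball $v*|\Sigma'_v|$ attached to $\partial P$ near $v$ --- you also conflate it with the $(d-1)$-dimensional closed star $v*\lk(v)\subseteq\partial P$, which is a neighborhood of $v$ in $\partial P$ but not in $P$. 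For a polytope with many vertices these cones are thin slivers hugging the boundary, and there is no a priori reason their union reaches a point of $P\cap H$ deep in the interior of $P$; that it does is precisely the assertion to be proved. Concretely, openness of $U$ fails at a point lying on a facet of some $\Sigma'_v$ (the ``inner'' boundary of the ball $|\Sigma_v|$) that is not capped by a $d$-simplex of another $\Sigma_u$, and your proposal contains no argument ruling this out. Tellingly, you never invoke the hypothesis that $\Sigma$ is a geometric simplicial complex, which is what forces the balls $|\Sigma_v|$ to fit together properly and is indispensable: without it the statement is simply false for fine approximations of a round sphere.

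The paper handles this crux by a one-dimensional sweep: it takes a generic line $\ell\subseteq H$ through a generic point of $P\cap H$, starts at a point of $\ell\cap\partial P$ (which lies in a facet having a vertex in $H^-$, hence in $|\Sigma|$), and analyzes the first exit point of $\ell$ from $|\Sigma|$, using genericity to place that point in the relative interior of a $d$- or $(d-1)$-simplex of $\Sigma$ and then showing that a $(d-1)$-simplex of $\Sigma$ not on $\partial P$ lies in two $d$-simplices of $\Sigma$, so the line cannot escape there. Some version of this ``no exit through the inner boundary'' argument is what your openness step needs. Two smaller points: your nonemptiness argument (``nearby points straddle $H$'') is wrong as stated, since a vertex $v\in H^-$ may be far from $H$; instead take a facet of $P$ meeting $H$, observe by genericity that it has a vertex $v$ strictly in $H^-$, and note that this facet lies in $|\Sigma_v|$. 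Your observation (a), that every face of $P$ meeting $\overline{H^-}$ has a vertex strictly in $H^-$, is correct and is also the starting point of the paper's sweep.
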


 \begin{proof}
 Let $|\Sigma|$ be the set of points that belong to some simplex of $\Sigma$, so  $|\Sigma| \subseteq P$.  We need to show that $P\cap H \subseteq |\Sigma| \cap H$.  Since $H$ is generic it contains none of the nonempty faces of $P$ nor of $\Sigma$. Let $x \in P\cap H$ be generic, i.e. in general position, with respect to the vertices of $P\cap H$, and let $\ell$ be a generic line in $H$ through $x$. We claim that $|\Sigma|\cap \ell = P\cap \ell$. To establish this, note that $\ell\cap P$ is a closed line segment and admits a continuous parametrization $\gamma: [0,1] \to \ell\cap P$.

Assume that there is $x\in P\cap \ell$ that is not in $|\Sigma|$. Notice that $\gamma(0)$ lies in the relative interior of a facet of $\partial P$, so this facet contains a vertex $y \in H^{-}$, by genericity of $H$. This facet is contained in a $d$-simplex of $\Sigma_y$, thus $\gamma([0,z))$ is contained in $|\Sigma|$ for some positive real $z$. Let $s = \inf \{t\in [0,1] \, | \, \gamma(t) \not\in |\Sigma|\}$. Notice that $s\ge z > 0$. As $s>0$, by compactness of $|\Sigma|$ we conclude $\gamma(s) \in |\Sigma|$. By genericity of $\ell$, $\gamma(s)$ is in the relative interior of a $d$- or a $(d-1)$-simplex of $\Sigma$.

The former case is clearly not possible: $\gamma(s)$ would be in the interior of $|\Sigma|$ and therefore in the interior of $|\Sigma|\cap \ell$, a contradiction.
In the latter case we will show that $\gamma(s)$ is in the interior of $|\Sigma|$ unless it is in $\partial P$. The reason for this is the following: let $\Gamma$ be a $(d-1)$-simplex of $\Sigma$ that contains $\gamma(s)$. The ridge $\Gamma$ is contained in exactly two facets $F_1, F_2$ of the ball $\Sigma$ unless it is on the boundary of $P$; indeed, the boundary ridges of $\Sigma$ not on $\dP$ do not contain the vertex  $y \in H^{-}$ introduced in the preceding paragraph. If $\gamma(s)$ is not in $\partial P$ we obtain that $\gamma(s)$ is in the interior of $F_1 \cup F_2$, thus also in the interior of $|\Sigma|$. If $\gamma(s) \in \partial P$, then $s=0$ or $s=1$. The case $s=0$ was discarded before. The case $s=1$ says $P\cap l=|\Sigma|\cap l$.
%is impossible, since $s$ is an infimum over numbers in $[0,1]$.

It follows that $\ell\cap P \subseteq \ell\cap |\Sigma|$, so in particular $x\in |\Sigma|\cap H$. The set of generic points of $P\cap H$ is dense in $P\cap H$ and is contained in the closed set $|\Sigma|\cap H$. The desired inclusion follows.
 \end{proof}

%\begin{rem}[Short $g$-vectors]  Lemma 3.3, the generalised lower bound theorem and the some considerations of the short $g$-vector give Kalai's conjecture whenever $2k<d$ as shown in \cite{FPSAC}.
%\end{rem}

\section{A proof of Kalai's conjecture for $C^1$-convex bodies}
\label{sec:C^1}
Here we prove the first main result of the paper, that part (i) of Kalai's conjecture is true. The following lemma is due to Zalgaller \cite{Zalgaller}, see also Schneider's book \cite[Section 2.3, Theorem in Note 1, case $s=1$]{Schneider:book2nd}.
\begin{lemma}\label{lem:auxHyp1} Let $K$ be a convex body in $\R^d$ and let $\pi$ denote an orthogonal projection onto a $k$-dimensional subspace $H$, chosen uniformly at random from the $(d,k)$-Grassmannian. Then, with probability $1$, all the affine subspaces that are orthogonal to $H$ and support $K$ do not contain a segment of $\dK$.
Thus, $\pi$ restricts to a homeomorphism from $K\cap \pi^{-1}(\partial \pi(K))$  to $\partial \pi(K)$.
\end{lemma}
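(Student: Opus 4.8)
\emph{Proof plan.}
I would separate the statement into the genericity claim — that for a.e.\ $H$ no supporting flat orthogonal to $H$ contains a segment of $\dK$ — and the homeomorphism claim, and observe that the latter is a formal consequence of the former. Indeed, a flat orthogonal to $H$ that supports $K$ is exactly a fibre $\pi^{-1}(y)$ with $y\in\partial\pi(K)$, and since $\pi(\inte K)=\inte\pi(K)$ for linear images of convex bodies, for such $y$ the set $\pi^{-1}(y)\cap K$ lies entirely in $\dK$; being a compact convex subset of $\dK$ that (by the genericity claim) contains no segment, it is a single point. Hence $\pi$ restricts to a continuous bijection from the compact set $K\cap\pi^{-1}(\partial\pi(K))$ onto the Hausdorff space $\partial\pi(K)$, and therefore to a homeomorphism.

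To prove the genericity claim I would first describe the bad subspaces combinatorially. Write $F(K,w)=\{x\in K:\langle x,w\rangle=\max_{z\in K}\langle z,w\rangle\}$ for the face exposed by $w$ and $L_w$ for its linear hull. I claim $H$ is bad if and only if there is a nonzero $w\in H$ with $L_w\cap H^{\perp}\neq\{0\}$. For ``$\Leftarrow$'': $F(K,w)$ then contains a segment parallel to some $u\in H^{\perp}$, this segment lies in $\dK$, and since $w\in H$ forces $\langle x,w\rangle=\langle\pi(x),w\rangle$ the segment sits in the fibre over a point of $F(\pi(K),w)\subseteq\partial\pi(K)$, which is a supporting flat orthogonal to $H$. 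Conversely, if a supporting flat $\pi^{-1}(y)$ (so $y\in\partial\pi(K)$) meets $K$ in a set containing a segment, pick $w\in H$ with $y\in F(\pi(K),w)$; the same identity places that segment inside $F(K,w)$, parallel to $H^{\perp}$.

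The core is then a dimension count over the Grassmannian. Let $N_m=\{w\in\Sp^{d-1}:\dim F(K,w)\ge m\}$. The classical input — this is what is cited from Zalgaller \cite{Zalgaller} and \cite[\S2.3, Note~1]{Schneider:book2nd}, resting ultimately on a.e.\ differentiability of the support function $h_K$ and on the fact that the set where a convex function on $\R^{n}$ has an $m$-dimensional subdifferential has Hausdorff dimension at most $n-m$ (applied to the $1$-homogeneous $h_K$ and restricted to $\Sp^{d-1}$) — is that $\dim_{\HH}N_m\le d-1-m$. Since $L_w\cap\Sp^{d-1}$ has dimension $m-1$ when $\dim F(K,w)=m$, the ruling set $\mathcal P=\{(w,u)\in\Sp^{d-1}\times\Sp^{d-1}:u\in L_w\}$ splits over $m$ into pieces of dimension $(d-1-m)+(m-1)=d-2$, so $\dim_{\HH}\mathcal P\le d-2$. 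Now $H$ is bad iff some $(w,u)\in\mathcal P$ satisfies $w\in H$ and $u\in H^{\perp}$; for a fixed such $(w,u)$ the admissible $k$-planes correspond to the $(k-1)$-planes of the $(d-2)$-dimensional space $u^{\perp}/\R w$, a family of dimension $(k-1)(d-1-k)$. Hence the bad subspaces sweep out a set of dimension at most $(d-2)+(k-1)(d-1-k)=k(d-k)-1$, one less than the dimension of the $(d,k)$-Grassmannian, so the bad set is null and the lemma follows.

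The main obstacle is precisely the classical smallness of the sets $N_m$ (equivalently, of the singular set of $h_K$): that is the genuine content, and it is exactly what the cited references supply; for self-containedness one could instead deduce $\dim_{\HH}N_m\le d-1-m$ from the theorem that the non-differentiability set of a convex function is $\sigma$-finite for $(n-1)$-dimensional Hausdorff measure. Everything else is bookkeeping, the one delicate point being to upgrade the heuristic dimension count to an honest $\HH$-null statement — done by presenting each $N_m$ as a countable union of sets of finite $\HH^{d-1-m}$-measure and running a Fubini/coarea estimate on the incidence set $\{(H,w,u):(w,u)\in\mathcal P,\ w\in H,\ u\in H^{\perp}\}$.
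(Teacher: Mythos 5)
The paper does not actually prove this lemma --- it cites it to Zalgaller and to Schneider's book --- and your proposal is a correct reconstruction of precisely that classical argument: the reduction of the homeomorphism claim to the no-segment claim (each supporting fibre $\pi^{-1}(y)\cap K$ is a compact convex subset of $\partial K$, hence a point, and a continuous bijection from a compactum is a homeomorphism), the characterization of bad subspaces via the incidence set of outer normals $w$ and face directions $u\in L_w$, the appeal to the $\sigma$-finiteness of $\mathcal{H}^{d-1-m}$ on the sets $N_m$, and the final count $(d-2)+(k-1)(d-1-k)=k(d-k)-1$ are all sound. You also correctly isolate the one delicate step --- upgrading the heuristic fibre-dimension count to an honest null statement on the Grassmannian --- and the $\sigma$-finite decomposition plus integral-geometric Fubini you propose for it is exactly how the cited sources carry this out, so I see no gap.
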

\begin{figure}[htb]
\begin{center}
\includegraphics[scale= 0.5]{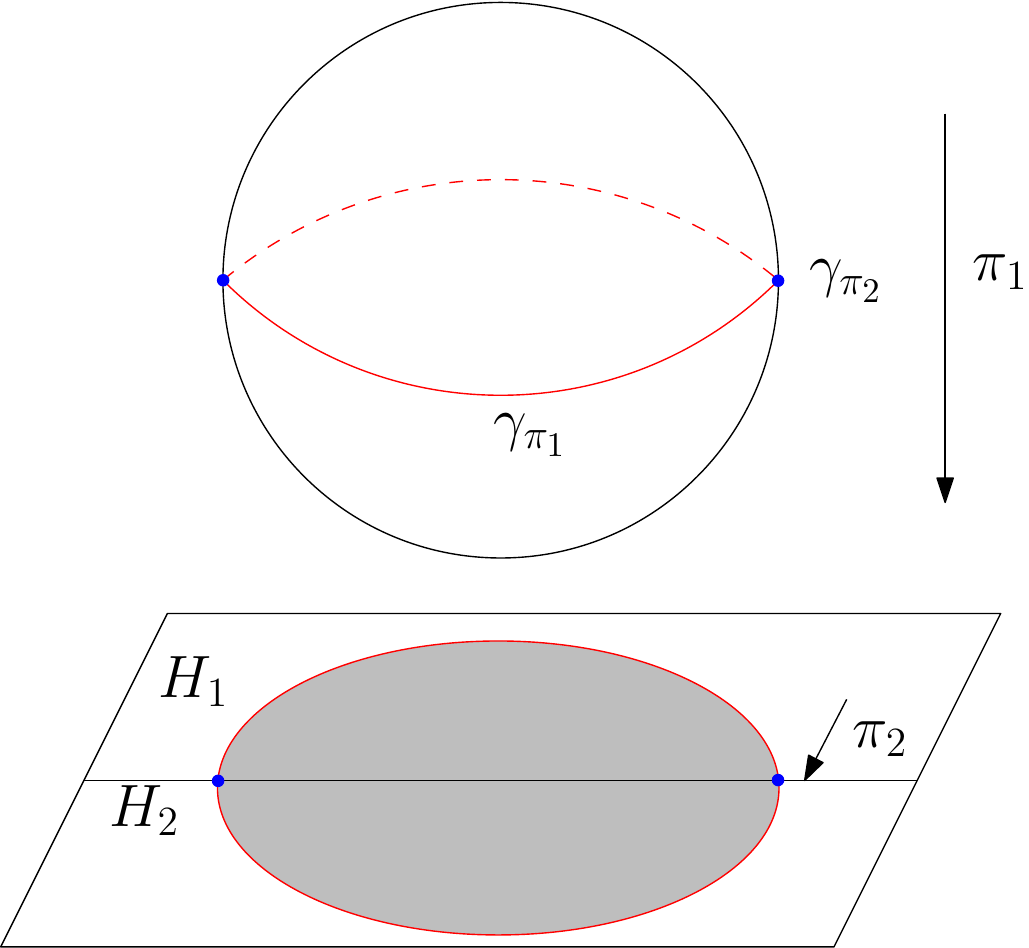}
\caption{Projections $\pi_1$ and $\pi_2$ to subspaces $H_1 \supseteq H_2$ with the respective $\gamma_{\pi_1} \cong \mathbb{S}^1$ and $\gamma_{\pi_2} \cong \mathbb{S}^0$.}
\end{center}
\end{figure}

In particular, the preimage of $\partial \pi(K)$ under $\pi$ is, with probability 1,  homeomorphic to a $(k-1)$-sphere; we denote this preimage by $\gamma_\pi$. Let $\gamma_\pi+\varepsilon := \gamma_\pi + \varepsilon \B$ (Minkowski sum), where $\B$ is the ball of radius $1$ in $\R^d$. Notice that in the Hausdorff metric

\begin{equation}\label{eqn:projInv}\lim_{\varepsilon\to 0} K\cap \pi^{-1}(\pi(\gamma_\pi+\varepsilon)) = \gamma_\pi.\end{equation}

Notice that if $\varepsilon$ is small enough, then there is a point $u\in\pi(K) \backslash \pi(\gamma_\pi + \varepsilon)$. Let $\hat{r}:\pi(\gamma_\pi + \varepsilon) \to \partial\pi(K)$ be the map that sends a point $x$ to the unique element $\hat{r}(x)$ of $\partial \pi(K)$ in the infinite ray from $u$ to $x$. The map $\hat{r}$ is a strong deformation retract if $\varepsilon$ is small enough. Now define $r: \gamma_\pi+ \varepsilon \to \gamma_\pi$ by letting $r(x)$ be the unique point in $\gamma_\pi$ that projects to $\hat{r}(\pi(x))$. Then $r$ is a strong deformation retract whenever $\hat{r}$ is, that is, for every small enough $\varepsilon$.

\begin{lemma}\label{lem:embededSurjects}
Let $\varepsilon>0$ be small enough so that the $\varepsilon$-neighborhood $\gamma_\pi+\varepsilon$  deformation retracts to $\gamma_\pi$.
Then, every simplicial polytope $P\subseteq K$ sufficiently close to $K$ in the Hausdorff metric,
has a subcomplex $\Delta\subseteq \dP \cap (\gamma_\pi+\varepsilon)$
whose embedding
%of the subcomplex $P\cdot (\gamma_\pi+\varepsilon)$
%of faces of $P$ lying in ${\gamma_\pi+\varepsilon}$
into ${\gamma_\pi+\varepsilon}$ induces an isomorphism in homology.
\end{lemma}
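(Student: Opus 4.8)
\emph{Proof proposal.} The plan is to take $\Delta$ to be the subcomplex of $\dP$ consisting of the faces whose $\pi$-image lies in a thin collar of $\partial\pi(K)$, and to read off its homology through the radial retraction $r$ by means of the Vietoris--Begle (acyclic-fibre) mapping theorem. Fix the deep interior point $u\in\pi(K)$ as above and a small $\delta>0$, and let $S:=\{x\in\pi(K):|x-\hat r(x)|\le\delta\}$ be the corresponding radial collar of $\partial\pi(K)$; for $\delta$ small the fibres $\hat r^{-1}(y)$ are honest line segments, and by~(\ref{eqn:projInv}) we may take $\delta$ so small that $K\cap\pi^{-1}(S)\subseteq\gamma_\pi+\varepsilon$. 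Recall that $\pi\circ r=\hat r\circ\pi$ on $\gamma_\pi+\varepsilon$ and that $\pi$ restricts to a homeomorphism $\gamma_\pi\to\partial\pi(K)$ by Lemma~\ref{lem:auxHyp1}. Now let $P\subseteq\inte K$ be simplicial with $\dH(P,K)$ small enough that $u\in\inte\pi(P)$ and $\partial\pi(P)\subseteq S$ (automatic once $\pi(P)$ is Hausdorff-close to $\pi(K)$), and set
\[\Delta:=\{F\in\dP:\pi(F)\subseteq S\},\]
a subcomplex with $|\Delta|\subseteq\dP\cap\pi^{-1}(S)\subseteq K\cap\pi^{-1}(S)\subseteq\gamma_\pi+\varepsilon$.

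Because $r$ is a homotopy equivalence and $\pi|_{\gamma_\pi}$ a homeomorphism, and $q:=\hat r\circ\pi|_{|\Delta|}\colon|\Delta|\to\partial\pi(K)$ equals $\pi|_{\gamma_\pi}\circ r\circ(|\Delta|\hookrightarrow\gamma_\pi+\varepsilon)$, proving the lemma is equivalent to proving that $q$ induces an isomorphism on homology. All the spaces involved are compact metric absolute neighbourhood retracts ($|\Delta|$ is a polyhedron and $\partial\pi(K)\cong\Sp^{k-1}$, so \v{C}ech and singular homology agree), so by the Vietoris--Begle theorem it suffices to show that $q$ is surjective with contractible point-preimages. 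For surjectivity, the radial ray from $u$ to a point $y\in\partial\pi(K)$ leaves $\pi(P)$ at some $y'\in\partial\pi(P)\subseteq S$; the set $\pi^{-1}(y')\cap\dP$ is non-empty and lies in $|\Delta|$ — if the relative interior of a face $F$ meets $\pi^{-1}(\partial\pi(P))$, convexity of $\pi(P)$ forces $\pi(F)\subseteq\partial\pi(P)\subseteq S$ — and $q$ sends it to $\hat r(y')=y$.

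The heart of the matter is the contractibility of $q^{-1}(y)=|\Delta|\cap\pi^{-1}(R_y)$, where $R_y$ is the radial segment from a point $a_y\in\inte\pi(P)$ out to $y'$. Here $Q_y:=P\cap\pi^{-1}(R_y)$ is a convex $(d-k+1)$-polytope whose boundary sphere splits, along the $(d-k-1)$-sphere $\dP\cap\pi^{-1}(a_y)$, as the union of the $(d-k)$-ball $\dP\cap\pi^{-1}(R_y)$ and the opposite \emph{inner cap} $P\cap\pi^{-1}(a_y)$, while its \emph{outer} facet $\pi^{-1}(y')\cap\dP$ is a single bounded polytope, which by the convexity observation above lies in $|\Delta|$. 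One then wants to deformation-retract $q^{-1}(y)$ onto this outer polytopal slice by pushing $\pi$-coordinates radially outward along $R_y$. The main obstacle — and the only non-formal step — is that this push must stay inside $|\Delta|$: the faces of $\dP$ that are \emph{not} in $\Delta$ but still meet the slab $\pi^{-1}(R_y)$ are exactly those reaching past the collar toward $\inte\pi(P)$, and for a merely $C^1$ body (where $\partial K$ may contain flats, so $P$ may have long edges near $\gamma_\pi$) such faces need not be confined to a neighbourhood of the inner cap; controlling how they meet the slab, so that the retraction of $q^{-1}(y)$ onto the outer slice is well defined, is the technical crux. Granting this, $q$ is a homology isomorphism, and hence so is the inclusion $|\Delta|\hookrightarrow\gamma_\pi+\varepsilon$.
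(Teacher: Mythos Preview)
Your argument has a genuine gap, and you name it yourself: the contractibility of the fibres $q^{-1}(y)=|\Delta|\cap\pi^{-1}(R_y)$ is asserted but not proved, and with your choice of $\Delta$ it is not clear that it is even true. The radial push you propose moves the $\pi$-coordinate of a point outward along $R_y$, but a lift of this motion to $\partial P$ must traverse whatever facets of $\partial P$ lie over $R_y$, and there is no reason those facets all have $\pi$-image contained in the collar $S$; as you note, for a merely $C^1$ body $\partial P$ may have long faces near $\gamma_\pi$, and such a face can protrude into $\pi^{-1}(\mathrm{int}\,\pi(P)\setminus S)$ while still meeting the slab. So the retraction may leave $|\Delta|$, and the Vietoris--Begle hypothesis is unverified.

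The difficulty is self-inflicted: you chose $\Delta$ to be the preimage of the whole collar $S$, which forces you to control fibres over entire radial segments. The paper instead takes the thinner complex
\[
\Delta\ :=\ P\cap\pi^{-1}\bigl(\partial\pi(P)\bigr),
\]
the ``shadow boundary'' of $P$ under $\pi$. This is still a subcomplex (a face of $P$ meeting $\pi^{-1}(\partial\pi(P))$ is supported by the affine subspace $\pi^{-1}(y)$, hence lies entirely in it), and equation~(\ref{eqn:projInv}) still gives $|\Delta|\subseteq\gamma_\pi+\varepsilon$ once $P$ is close enough. Now the relevant fibres are those of $\pi|_\Delta:|\Delta|\to\partial\pi(P)$, and each fibre $\pi^{-1}(y)\cap P$ with $y\in\partial\pi(P)$ is a face of $P$, hence convex and contractible; no delicate analysis is needed. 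Thus $\pi|_\Delta$ is a homotopy equivalence onto $\partial\pi(P)$, and since $\hat r|_{\partial\pi(P)}:\partial\pi(P)\to\partial\pi(K)$ is a homeomorphism (radial projection between boundaries of convex bodies sharing the interior point $u$), your own factorisation $\pi|_{\gamma_\pi}\circ r\circ\iota=\hat r\circ\pi|_\Delta$ finishes the proof. In short: shrink $\Delta$ to the shadow boundary and apply your Vietoris--Begle reasoning to $\pi|_\Delta$ rather than to $q$; the fibres become faces of $P$ and the ``technical crux'' disappears.
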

\begin{proof}
%Let $P\subset K$ be a polytope. We may assume $\pi$ is in general position with respect to $P$. By Lemma~\ref{lem:auxHyp1}, the map $\pi$ restricts to a homeomorphism onto $\partial \pi(P)$ thus $\Delta$ is a $(k-1)$ sphere.
%\eran{Jose, the above is not important, so i commented it out.}
Let  $\Delta :=P\cap \pi^{-1}(\partial(\pi(P)))$. Then $\Delta$ is a subcomplex of $P$ and $\pi(\Delta) = \partial (\pi(P))$. By equation (\ref{eqn:projInv}) there exists $\epsilon'>0$ such that $\pi^{-1}(\pi(\gamma_\pi +\epsilon')) \subseteq \gamma_{\pi} +\varepsilon$. If $P$ is close enough to $K$ then $\partial\pi(P) \subseteq \pi(\gamma_\pi + \epsilon')$, thus equation (\ref{eqn:projInv}) implies $\Delta$ is contained in $\gamma_{\pi}+\varepsilon$.
Note that $\pi_{| \Delta}$ is a homotopy equivalence from $|\Delta|$ to $\pi(\gamma_{\pi}+\varepsilon)$.

Let $g: \pi(\gamma_\pi) \to \gamma_\pi$ be the inverse of $\pi$ restricted to $\gamma_{\pi}$, namely $g(x)$ is the point $\pi^{-1}(x)\cap K$. Let $\iota$ denote the inclusion of $\Delta$ in $\gamma_\pi+\varepsilon$, then $r\circ \iota = g\circ \hat{r} \circ \pi_{| \Delta}$. The induced maps in homology of
$r,\  g,\ \hat{r},\ \pi_{| \Delta}$ are clearly isomorphisms, so $\iota$ is an isomorphism too.
 \end{proof}

Until now, we have not yet used the $C^1$ property of $K$ in any way.
Now we use the fact that all points of $\dK$ are non-singular. (In fact, this property is equivalent to being $C^1$.)

Consider any non-singular convex body $K$, let $(\varepsilon_i)$ denote a sequence of real positive numbers tending to $0$, and let $(P_i)$ denote a sequence of simplicial
%{\bf NOT NEEDED THOUGH: ADDED BECAUSE OF THE REF REQUEST}
polytopes so that $\dH(K,P_i)<\varepsilon_i$ for all $i$.
%\eran{In the proof below $C^1$ "is not used": the place where for $K$ a polytope the proof won't work is when we  get disjoint cycles for different projections -- there we need to use $C^1$. See pf of Thm below.}

\begin{lemma}\label{lem:C^1used}
With $K$, $(\varepsilon_i)$ and $(P_i)$ as above, for every $\varepsilon>0$,
\[\max_{} \{\mathrm{diam}\, \sigma:{\sigma  \text{ is a face of } P_i,\ V(\sigma) \subset \gamma_\pi+\varepsilon_i,\ |\sigma| \not\subset \gamma_\pi+\varepsilon}\} \xrightarrow{i \rightarrow \infty} 0.\]
\end{lemma}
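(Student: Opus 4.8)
\section*{Proof proposal}

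The natural strategy is a compactness/contradiction argument: a sequence of ``bad'' faces would, after passing to a subsequence, converge to a limit simplex whose vertices lie on $\gamma_\pi$ and which contains a point $m^*$ at distance $\ge\varepsilon$ from $\gamma_\pi$, and I will show such an $m^*$ must in fact lie on $\gamma_\pi$. First, a reformulation: the displayed maximum tending to $0$ is equivalent to the assertion that for every $\varepsilon>0$ there is $N$ so that for $i\ge N$ \emph{no} face $\sigma$ of $P_i$ with $V(\sigma)\subset\gamma_\pi+\varepsilon_i$ has $|\sigma|\not\subset\gamma_\pi+\varepsilon$. Indeed, any point of $|\sigma|$ is within $\mathrm{diam}\,\sigma$ of a vertex of $\sigma$, so if all vertices of $\sigma$ are within $\varepsilon_i$ of $\gamma_\pi$ and some point of $|\sigma|$ is at distance $\ge\varepsilon$, then $\mathrm{diam}\,\sigma>\varepsilon-\varepsilon_i$, which is bounded below for large $i$; so a non-vanishing maximum is the same as an infinite supply of such faces.

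So I assume for contradiction that for all $i$ in some infinite set $J$ there is a face $\sigma_i$ of $P_i$ with $V(\sigma_i)\subset\gamma_\pi+\varepsilon_i$ and a point $m_i\in|\sigma_i|$ with $d(m_i,\gamma_\pi)\ge\varepsilon$. Each $\sigma_i$, being a face of the simplicial polytope $P_i$, is a simplex lying in a supporting hyperplane $G_i=H(u_i,P_i)$ with outer unit normal $u_i$. Write $m_i=\sum_l t_{i,l}v_{i,l}$ as a convex combination of the vertices $v_{i,l}$ of $\sigma_i$, each of which satisfies $d(v_{i,l},\gamma_\pi)<\varepsilon_i$. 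After refining $J$, the number of vertices of $\sigma_i$ is a fixed $s$, and $v_{i,l}\to v_l^*$, $m_i\to m^*$, $t_{i,l}\to t_l^*$, $u_i\to u^*$. Since $\gamma_\pi\subseteq K$ is compact and $\varepsilon_i\to 0$, we get $v_l^*\in\gamma_\pi$, $m^*\in K$, $m^*=\sum_l t_l^*v_l^*$ with $\sum_l t_l^*=1$, $t_l^*\ge 0$, and $d(m^*,\gamma_\pi)\ge\varepsilon$, hence $m^*\notin\gamma_\pi$.

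I now show all these limit points lie on one supporting hyperplane of $K$. From $v_{i,l},m_i\in G_i$ we have $\langle u_i,v_{i,l}\rangle=c(u_i,P_i)=\langle u_i,m_i\rangle$; using the standard estimate $|c(u,P_i)-c(u,K)|\le\delta^H(P_i,K)<\varepsilon_i$ together with continuity of the support function $u\mapsto c(u,K)$, we may pass to the limit to obtain $\langle u^*,v_l^*\rangle=c(u^*,K)=\langle u^*,m^*\rangle$. Thus $v_0^*,\dots,v_{s-1}^*,m^*\in H(u^*,K)\cap K$, so $H(u^*,K)$ is a supporting hyperplane of $K$ at $v_0^*\in\partial K$. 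On the other hand, $v_0^*\in\gamma_\pi$ means $\pi(v_0^*)\in\partial\pi(K)$, so there is a supporting hyperplane $\ell^*$ of $\pi(K)$ inside $H$ through $\pi(v_0^*)$, and then $\pi^{-1}(\ell^*)$ is a supporting hyperplane of $K$ at $v_0^*$ (because $K\subseteq\pi^{-1}(\pi(K))$ lies on one side of $\pi^{-1}(\ell^*)$, and $v_0^*$ lies on it). Here is the \emph{only} place the hypothesis enters: $K$ is non-singular, so $v_0^*$ has a unique supporting hyperplane, forcing $H(u^*,K)=\pi^{-1}(\ell^*)$. Consequently $m^*\in H(u^*,K)\cap K\subseteq\pi^{-1}(\ell^*)$, so $\pi(m^*)\in\ell^*\cap\pi(K)\subseteq\partial\pi(K)$; combined with $m^*\in K$ this gives $m^*\in K\cap\pi^{-1}(\partial\pi(K))=\gamma_\pi$, contradicting $m^*\notin\gamma_\pi$. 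This contradiction finishes the proof.

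The compactness extractions and the support-function estimate are routine; the one structural point — and the crux — is the recognition that a limit simplex with all vertices on $\gamma_\pi$ is forced by non-singularity to sit inside a ``vertical'' supporting hyperplane, i.e.\ one of the form $\pi^{-1}(\ell^*)$ with $\ell^*$ supporting $\pi(K)$ in $H$, so that the entire simplex, and in particular the stray point $m^*$, projects into $\partial\pi(K)$ and therefore already belonged to $\gamma_\pi$. (Note that, unlike other arguments in this section, this proof does not require Lemma~\ref{lem:auxHyp1} nor genericity of $\pi$; it uses only the definition of $\gamma_\pi$ and non-singularity of $K$.)
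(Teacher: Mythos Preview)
Your proof is correct and follows essentially the same compactness-and-contradiction route as the paper's: pass to a limit configuration, then use non-singularity at a point of $\gamma_\pi$ to force the supporting hyperplane there to be of the form $\pi^{-1}(\ell^*)$, trapping the stray limit point in $\gamma_\pi$. The only cosmetic differences are that the paper first reduces to a line segment and argues via the tangent hyperplane of $\partial K$, whereas you keep the full face and pass to the limit through the support functions of the $P_i$; your observation that Lemma~\ref{lem:auxHyp1} is not actually needed for this step is also correct.
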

\begin{proof}
Assume by contradiction that there are $\delta>0$, a subsequence $(P_j)$ and
faces $\sigma_j\in P_j$ such that $V(\sigma_j) \subset \gamma_\pi+\varepsilon_j
%_j
,\ |\sigma_j| \not\subset \gamma_\pi+\varepsilon
%_j
$, and $\mathrm{diam}(\sigma_j)\ge \delta$.

%By passing to a subsequence we can assume all these faces have the same dimension, say $t$.
There are two vertices in $\sigma_j$ whose distance is $\geq\delta$ and by the triangle inequality every point in $\sigma_j$ is at least $\frac \delta 2$ apart from one of them. Taking a point of $\sigma_j$ not in $\gamma_\pi +\varepsilon$  we obtain a line segment $e_j \subset \sigma_j$ of length at least $\frac{\delta}{2}>0$ connecting two points $v_j$,\ $v'_j$ such that $e_j \not\subset \gamma_\pi+\varepsilon $ and $v_j$ is a vertex of $\sigma_j$.

By compactness, passing to a subsequence we can assume that
%in each coordinate $1\le l\le t$
there is convergence $e_j\xrightarrow{j \rightarrow \infty} e=[v,v']$ with $v\neq v'$
, and $v_j\xrightarrow{j \rightarrow \infty} v$, so $v\in\gamma_\pi$.

Since $ P_j  \xrightarrow{j \rightarrow \infty}  K$ then $e\subset \partial K$.
% As $\dH(K,P_j)<\varepsilon_j$ for all $j$, $e\subset \dK$.
We claim that in fact $e$ must be contained in $\gamma_\pi$.
Notice that for any point $x$ in $\gamma_\pi$, the hyperplane $T_x$ tangent to $\partial K$ at $x$ projects to the tangent space to $\partial(\pi(K))$ at $\pi(x)$. As $T_v$ is the unique tangent plane at $v$, since $K$ is nonsingular, $e\subset T_v$ (and $e\subset T_{v'}$) and therefore $e\subset \gamma_\pi$ by Lemma~\ref{lem:auxHyp1}.

%%%%%%%%%%%%%%%%%
We conclude that for any fixed $\varepsilon>0$, a large enough $j$ satisfies $|e_j|\subseteq e +\varepsilon \subseteq \gamma_\pi +\varepsilon$, a contradiction to the choice of~$e_j$.
\end{proof}

%Assume by contradiction that there are $\delta>0$, a subsequence $(P_j)$ and
%faces $\sigma_j\in P_j$ such that $V(\sigma_j) \subset \gamma_\pi+\varepsilon
%%_j
%,\ |\sigma_j| \not\subset \gamma_\pi+\varepsilon
%%_j
%$, and $\mathrm{diam}(\sigma_j)\ge \delta$.
%
%By passing to a subsequence we can assume all these faces have the same dimension, say $t$, that contain line segments $e_j$ connecting two vertices $v_j$,\ $v'_j$ of length $>\delta>0$.
%
%By compactness, passing to a subsequence we can assume that in each coordinate $1\le l\le t$ we have convergence $e_j\xrightarrow{j \rightarrow \infty} e$.
%
%As $\dH(K,P_j)<\varepsilon_j$ for all $j$, $e\in \dK$.
%We claim that in fact $e$ must be contained in $\gamma_\pi$.
%Notice that for any point $x$ in $\gamma_\pi$, the hyperplane $T_x$ tangent to $\partial K$ at $x$ projects to the tangent space to $\partial(\pi(K))$ of $\partial \pi(K)$ in $\pi(x)$. Since $T_v$ is the unique tangent plane since $K$ is nonsingular, $e\in T_v$ (and $e\subset T_{v'}$) and therefore $e\in \gamma_\pi$
%
%We conclude that for any fixed $\varepsilon>0$, a large enough $j$ satisfies $|e_j|\subseteq e +\varepsilon \subseteq \gamma_\pi +\varepsilon$, a contradiction to the choice of~$e_j$.
%\end{proof}

Combining Lemmas~\ref{lem:embededSurjects} and~\ref{lem:C^1used} gives:
\begin{cor}\label{cor:C^1homology}
For any
non-singular
convex body $K$ and every $\varepsilon$ small enough, there is $\varepsilon'>0$ small enough such that for every simplicial polytope $P$ that is $\varepsilon'$-close to $K$ in the Hausdorff metric,  the subcomplex $\Gamma\subseteq\partial P$
induced by the vertices of $P$ in $\gamma_\pi+\varepsilon'$ is contained
in $\gamma_\pi+\varepsilon$, and this inclusion induces a surjection in homology.
\end{cor}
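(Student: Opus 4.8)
The plan is to deduce the corollary from Lemma~\ref{lem:embededSurjects} and Lemma~\ref{lem:C^1used} by comparing the induced subcomplex $\Gamma = (\partial P)_{V(P)\cap(\gamma_\pi+\varepsilon')}$ with the projection-defined subcomplex $\Delta = P\cap\pi^{-1}(\partial\pi(P))$ from Lemma~\ref{lem:embededSurjects}, and checking that $\Gamma$ is sandwiched between $\Delta$ and $\gamma_\pi+\varepsilon$ in a way that makes the inclusions fit into a commuting triangle of homology maps. First I would fix $\varepsilon>0$ small enough that $\gamma_\pi+\varepsilon$ deformation retracts onto $\gamma_\pi$ (possible by the discussion preceding Lemma~\ref{lem:embededSurjects}, using that $K$ is non-singular). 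Apply Lemma~\ref{lem:C^1used}: there is $\varepsilon'\in(0,\varepsilon)$ such that for all $P$ sufficiently close to $K$, every face $\sigma$ of $P$ all of whose vertices lie in $\gamma_\pi+\varepsilon'$ actually satisfies $|\sigma|\subset\gamma_\pi+\varepsilon$. (Here I am using the sequence formulation of Lemma~\ref{lem:C^1used} contrapositively: if no single $\varepsilon'$ worked for arbitrarily close $P$, one extracts a sequence $P_i\to K$ violating the conclusion of that lemma.) This gives the first assertion: $\Gamma$, whose faces are exactly those $\sigma\in\partial P$ with $V(\sigma)\subset\gamma_\pi+\varepsilon'$, has $|\Gamma|\subset\gamma_\pi+\varepsilon$.

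Next I would establish that the inclusion $\Gamma\hookrightarrow\gamma_\pi+\varepsilon$ is surjective on homology. Shrinking $\varepsilon'$ further if needed, Lemma~\ref{lem:embededSurjects} (applied with the role of its "$\varepsilon$" played by our $\varepsilon$, possibly after first passing through an intermediate neighborhood $\gamma_\pi+\varepsilon''$ with $\varepsilon'<\varepsilon''<\varepsilon$) produces a subcomplex $\Delta\subseteq\partial P\cap(\gamma_\pi+\varepsilon'')$ whose inclusion into $\gamma_\pi+\varepsilon''$ induces an isomorphism in homology; composing with the retraction $\gamma_\pi+\varepsilon''\simeq\gamma_\pi\hookrightarrow\gamma_\pi+\varepsilon$ shows $\Delta\hookrightarrow\gamma_\pi+\varepsilon$ is an isomorphism on homology too. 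The point is then that every vertex of $\Delta$ lies in $\gamma_\pi+\varepsilon''\subseteq\gamma_\pi+\varepsilon'$ provided we have arranged $\varepsilon''\le\varepsilon'$ — so I should instead choose the parameters so that $\Delta$ sits on vertices inside $\gamma_\pi+\varepsilon'$, hence $\Delta$ is a subcomplex of the \emph{induced} subcomplex $\Gamma$. (Concretely: first pick $\varepsilon''<\varepsilon$ from Lemma~\ref{lem:embededSurjects}, then pick $\varepsilon'$ with $\varepsilon''<\varepsilon'<\varepsilon$ and apply Lemma~\ref{lem:C^1used} with threshold $\varepsilon'$; then $\Delta\subseteq\gamma_\pi+\varepsilon''$ has all vertices in $\gamma_\pi+\varepsilon'$, so $\Delta\subseteq\Gamma$, while $|\Gamma|\subset\gamma_\pi+\varepsilon$.) Now the inclusions factor as $\Delta\hookrightarrow\Gamma\hookrightarrow\gamma_\pi+\varepsilon$, and the composite is an isomorphism on $H_*$; functoriality forces $H_*(\Gamma)\to H_*(\gamma_\pi+\varepsilon)$ to be surjective. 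That is exactly the claimed conclusion.

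The main obstacle is purely bookkeeping of the nested $\varepsilon$-parameters: Lemma~\ref{lem:embededSurjects} gives a subcomplex on the vertices it chooses (those of $\Delta$), not on \emph{all} vertices lying in a prescribed neighborhood, so I must verify that the induced subcomplex $\Gamma$ genuinely contains $\Delta$ — this needs $\varepsilon''\le\varepsilon'$ — while simultaneously Lemma~\ref{lem:C^1used} requires $\varepsilon'<\varepsilon$ to guarantee $|\Gamma|\subset\gamma_\pi+\varepsilon$, and also forces $P$ to be taken close enough to $K$ (depending on the gap $\varepsilon-\varepsilon'$). Threading a single closeness threshold $\varepsilon'$ (abusing notation as the statement does, conflating the vertex-neighborhood radius with the Hausdorff-closeness bound) through both lemmas is the only subtlety; once the inequalities $\varepsilon''<\varepsilon'<\varepsilon$ and "$P$ is $\varepsilon'$-close" are in place, the homology statement is immediate from the commuting triangle. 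One should also note in passing that $\Gamma$ is indeed a \emph{subcomplex} of $\partial P$ (induced subcomplexes always are) and that, being induced, it is unambiguously determined by its vertex set, which is what makes the Quantitative Lower Bound Theorem applicable to it downstream.
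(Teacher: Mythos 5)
Your argument is correct and follows essentially the same route as the paper: use Lemma~\ref{lem:C^1used} to force $|\Gamma|\subset\gamma_\pi+\varepsilon$, then note that the subcomplex $\Delta$ of Lemma~\ref{lem:embededSurjects} has all its vertices in $\gamma_\pi+\varepsilon'$ and hence sits inside the induced complex $\Gamma$, so the factorization $\Delta\hookrightarrow\Gamma\hookrightarrow\gamma_\pi+\varepsilon$ forces surjectivity on homology. Your extra bookkeeping with the intermediate radius $\varepsilon''$ is harmless; the paper simply applies Lemma~\ref{lem:embededSurjects} with radius $\varepsilon'<\varepsilon/2$ directly.
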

\begin{proof}
For small enough $\varepsilon>0$, $\gamma_\pi+\varepsilon$ retracts to $\gamma_\pi$. By Lemma~\ref{lem:C^1used}, there exists  $\varepsilon'<\frac{\varepsilon}{2}$ such that, if $P$ is $\varepsilon'$-close to $K$, for the complex $\Gamma$ on the vertices of $P$ in $\gamma_\pi +\varepsilon'$,
 all edges of $\Gamma$ that are not contained in $\gamma_\pi +\varepsilon'$ are of length $<\frac{\varepsilon}{2}$, so for a subcomplex $\Delta \subset \partial P \cap (\gamma_\pi+\varepsilon')$ as in Lemma~\ref{lem:embededSurjects} there are embeddings $|\Delta|\hookrightarrow |\Gamma|\hookrightarrow \gamma_\pi+\varepsilon$. Consider the induced maps in homology:
as the composition is a surjection in homology by Lemma~\ref{lem:embededSurjects}, so is the second map.
\end{proof}

We are now ready to prove Kalai's conjecture:
\begin{thm}\label{thm:main2}
Let $K$ be a $d$-dimensional $C^1$-convex body in $\mathbb{R}^d$ and let $g,k>0$ be integers with $k\le \frac d2$. There exists $\varepsilon > 0$ such that if $P$ is a simplicial polytope with $\delta^H(P, K) \le \varepsilon$, then $g_k(P) > g$.
\end{thm}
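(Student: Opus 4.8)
The plan is to combine the homological consequence of good approximation (Corollary~\ref{cor:C^1homology}) with the quantitative lower bound theorem (Theorem~\ref{thm:QLBT}), iterating over several independent ``directions of projection'' to produce $g$ linearly independent homology classes. First I would choose $g$ pairwise-disjoint small spherical caps $C_1,\dots,C_g$ on $\dK$; for each $i$, pick a point $x_i$ in the interior of $C_i$ and, using that $K$ is $C^1$ so $x_i$ is non-singular, choose a $(d-k)$-dimensional subspace $H_i$ (generic in the $(d,d-k)$-Grassmannian, as in Lemma~\ref{lem:auxHyp1}) such that the contour $\gamma_{\pi_i}$ of the orthogonal projection $\pi_i\colon\R^d\to H_i$ is a topological $(d-k-1)$-sphere passing through $C_i$ and, more importantly, so that $\gamma_{\pi_i}$ is localized near $x_i$ — concretely, $\gamma_{\pi_i}$ should be chosen to meet $\dK$ in a small neighborhood contained in $C_i$. (This is where I expect to need care: a priori the contour $\gamma_{\pi_i}$ is a global $(d-k-1)$-sphere on $\dK$, not a small object, so I must instead localize by intersecting with $C_i$, or work with a convex body $K_i$ agreeing with $K$ only near $C_i$; see the obstacle paragraph below.)

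Next, apply Corollary~\ref{cor:C^1homology} to each direction $H_i$: for $\ep$ small enough there is $\ep_i'>0$ so that any simplicial $P$ with $\dH(P,K)\le\ep_i'$ has the induced subcomplex $\Gamma_i$ on the vertices of $P$ lying in $\gamma_{\pi_i}+\ep$ contained in $\gamma_{\pi_i}+\ep$ with the inclusion surjective on homology. Since $\gamma_{\pi_i}+\ep$ deformation retracts to $\gamma_{\pi_i}\cong \Sp^{d-k-1}$, we get $\tilde\beta_{d-k-1}(\Gamma_i)\ge 1$. Now take $\ep=\min_i \ep_i$ chosen small enough (using the localization above) that the thickened contours $\gamma_{\pi_1}+\ep,\dots,\gamma_{\pi_g}+\ep$ are pairwise disjoint; then the vertex sets $W_i:=V(P)\cap(\gamma_{\pi_i}+\ep)$ are pairwise disjoint, and their union $W:=\bigcup_i W_i$ satisfies $\Gamma_{W}=\Delta_{W}\supseteq\bigsqcup_i \Gamma_i$ as a disjoint union of induced subcomplexes. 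Hence $\tilde\beta_{d-k-1}(\Delta_W)\ge \sum_{i=1}^g \tilde\beta_{d-k-1}(\Gamma_i)\ge g$. Finally, $P$ approximating $K$ is a simplicial $d$-polytope with boundary complex $\Delta$, so Theorem~\ref{thm:QLBT} gives $g_k(P)=g_k(\Delta)\ge\tilde\beta_{d-k-1}(\Delta_W)\ge g$; taking $\ep$ smaller still to get strict inequality $g_k(P)>g$ (e.g. run the argument with $g+1$ caps) completes the proof.

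The main obstacle, as flagged, is that the contour $\gamma_\pi$ of a projection is a \emph{global} $(d-k-1)$-sphere on $\dK$ and the whole ``homology detection'' machinery of Section~\ref{sec:C^1} is phrased around a single such contour. To get $g$ \emph{independent} classes I need $g$ such contours that are metrically far apart, yet each a genuine $(d-k-1)$-sphere. The clean way around this is to replace the convex body: for each cap $C_i$ build a $C^1$-convex body $K_i$ that coincides with $K$ in a neighborhood of $C_i$ and is ``flat'' (or smoothly rounded) elsewhere, so that a generic $(d-k)$-projection of $K_i$ has its contour $\gamma_{\pi_i}$ lying entirely inside the neighborhood where $K_i=K$, hence inside $C_i$. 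Since $P\subseteq K$ near $C_i$ looks the same as an approximant of $K_i$ near $C_i$, Corollary~\ref{cor:C^1homology} applies to $K_i$ and delivers the surjection onto $\tilde H_{d-k-1}(\gamma_{\pi_i})$ detected by vertices of $P$ in $C_i$. Then disjointness of the caps does all the bookkeeping. Verifying that such local modifications $K_i$ exist and that the relevant projection is still generic is routine smooth-convex-body surgery; the only genuinely delicate point is matching the constants $\ep_i'$ coming from Lemma~\ref{lem:C^1used} (which is where non-singularity — i.e. the $C^1$ hypothesis — is essential) with the disjointness radius of the caps, and this is handled by simply shrinking $\ep$ at the end.
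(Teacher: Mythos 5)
Your overall architecture (localized $(d-k-1)$-cycles detected in an induced subcomplex of $\dP$ via Corollary~\ref{cor:C^1homology}, then Theorem~\ref{thm:QLBT}) matches the paper's, but the two steps you defer as ``routine'' are exactly where the content lies, and your proposed fixes fail. First, the localization. You correctly identify that the contour $\gamma_\pi$ of an orthogonal projection is a global object: it contains, for every unit $u$ in the target subspace, a point of $\dK$ maximizing $\langle u,\cdot\rangle$, and the points for $u$ and $-u$ are a full width of the body apart. Your surgery fix cannot resolve this tension: if $K_i$ is a genuine convex body comparable to $K$, its shadow boundary is not confined to a small cap; if instead you flatten $K_i$ to a thin lens near $C_i$ so that its contour is small, then $P$ is no longer Hausdorff-close to $K_i$, and Corollary~\ref{cor:C^1homology} (through Lemma~\ref{lem:embededSurjects}) genuinely needs global closeness --- it builds the subcomplex $P\cap\pi^{-1}(\partial\pi(P))$ and requires $\partial\pi(P)\subseteq\pi(\gamma_\pi+\epsilon')$, which fails when $\pi(P)$ is much larger than $\pi(K_i)$. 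Truncating $P$ near $C_i$ to repair this destroys the face structure of $\dP$ that Theorem~\ref{thm:QLBT} needs. The paper's device, which is the key idea you are missing, is different: fix one extremal point $x$, one projection $\pi$, and a sequence of \emph{projective transformations} $p_i$ sending points $y_i\to x$ on an exterior ray to infinity; the pulled-back contours $\gamma_{\pi\circ p_i}$ are genuine $(d-k-1)$-spheres shrinking to $x$ that can be chosen pairwise disjoint, and all of Lemmas~\ref{lem:auxHyp1}--\ref{lem:C^1used} apply to each $\pi\circ p_i$ verbatim.

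Second, the bookkeeping. Disjointness of the vertex sets $W_i$ does \emph{not} yield $\tilde{\beta}_{d-k-1}(\Delta_W)\ge\sum_i\tilde{\beta}_{d-k-1}(\Delta_{W_i})$: the induced complex $\Delta_W$ may contain faces with vertices in several $W_i$'s, and such faces can identify the classes (two disjoint $(d-k-1)$-spheres joined by a tube of simplices contribute $1$ to $\tilde{\beta}_{d-k-1}$, not $2$); Betti numbers are neither monotone nor additive under this enlargement. The paper must, and does, prove that for $\varepsilon$ small there is no edge of $\dP$ between $\Gamma_i$ and $\Gamma_j$ for $i\ne j$: a compactness argument produces from such edges a segment of $\dK$ joining $\gamma_i$ to $\gamma_j$, which the $C^1$ hypothesis forces into the unique supporting hyperplane at one endpoint and hence, by the choice of the projections, into $\gamma_j$ itself, contradicting disjointness. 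Note that $C^1$ bodies may contain segments in their boundary, so long edges of $\dP$ cannot be excluded outright; the argument really uses the shadow-boundary structure. Both gaps are fixable, but only by importing the projective-localization idea and the no-spanning-edges lemma, not by cap surgery and disjointness alone.
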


\begin{proof}
Let $x$ be an extremal point of $K$.
Then, by Lemma~\ref{lem:auxHyp1}, there are
\begin{compactitem}[$\triangleright$]
\item a projection $\pi$ of $\mathbb{R}^d$ onto a $(d-k)$-dimensional subspace, and
\item a ray $l$ emanating from $x$, $l\cap K=\{x\}$, such that the projection $\pi_l$ onto the orthogonal
space to $l$ contains the range of $\pi$, and
\item points $x\neq y_i \in l$ converging to $x$, and projective transformations $p_i$, each mapping $y_i$ to infinity along $l$,
\end{compactitem}
such that
\begin{compactitem}[$\triangleright$]
\item each composition $\pi_i=\pi\circ p_i$ restricts to a homeomorphism from $\gamma_{\pi_i}:=\pi_i^{-1}(\partial \pi_i(K))\cap K$ to $\partial \pi_i(K)$; so each $\gamma_{\pi_i}$ is a $(d-k-1)$-cycle, and
\item each $\pi_l \circ p_i$ restricts to a homeomorphism from $\gamma_{p_i}=(\pi_l \circ p_i)^{-1}(\partial \pi_l \circ p_i(K))\cap K$ to $\partial (\pi_l \circ p_i)(K)$; so each $\gamma_{p_i}$ is a $(d-2)$-cycle containing $\gamma_{\pi_i}$.
\end{compactitem}

\begin{figure}[htb]
\begin{center}
\includegraphics[scale = 0.5]{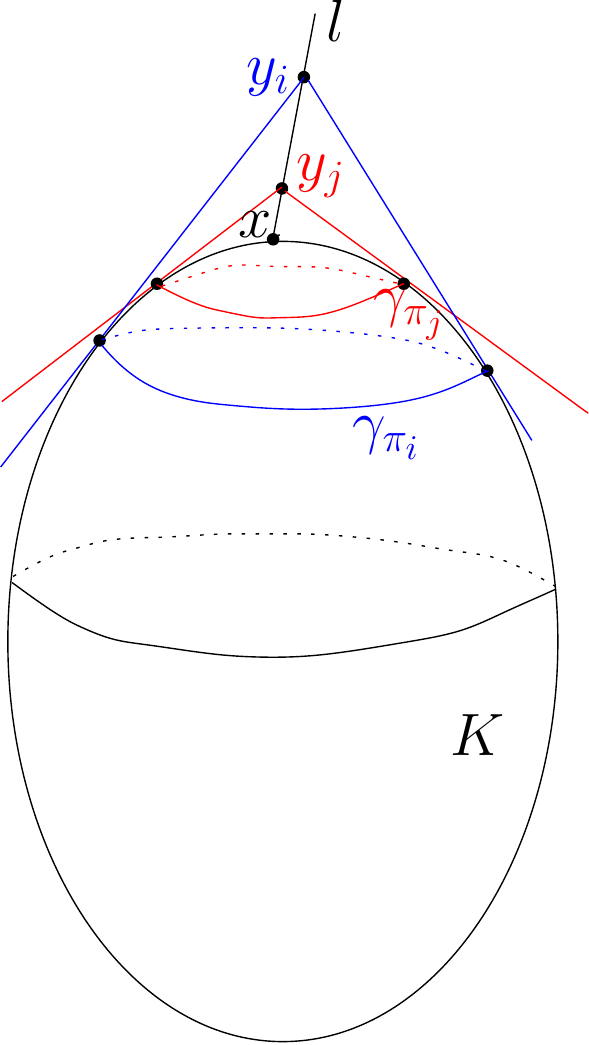}
\caption{Suitable values of $y_i$ give disjoint $\gamma_{\pi_i}$.}
\end{center}
\end{figure}

 %and let $\ell$ be the ray emanating from $x$ perpendicular to the hyperplane $H$ supporting $x$ in the direction opposite to $K$. Let $\{y_i\}$ be a sequence in $\ell$ that converges to $x$ and let $H_i$ be a generic $(d-k)$-subspace through $y_i$ such that if $u_i$ is the projection of $x$ in $H_i$, then $|y_i-u_i| << |x-y_i|$. Let $F_i$ be the hyperplane orthogonal to $\ell$ and let $p_i$ be the projective transformation that takes $F_i$ to infinity. Let $\hat{\pi_i}$ be a projection of $p_i(K)$ into $p_i(H_i)$ notice that by genericity of $H_i$ and lemma~\ref{lem:auxHyp1}, $\hat{\pi}_i\circ p_i$ restricts to a homeomorphism to $\partial(\hat{\pi}_i\circ p_i)$, whose preimage we denote by $\gamma_i$. By construction we have that \[\lim_{i\to \infty} \sup_{z\in \gamma_i} d(z, v(y_i, K)) = 0\]

Note that $x\notin \gamma_{\pi_i}:=\gamma_i$ for all $i$, but $\gamma_{i}\to \{x\}$ in the Hausdorff measure.
By passing to a subsequence of $(y_i)$, we may assume that $\gamma_i \cap \gamma_j = \emptyset$ for all $i \neq j$: indeed, $x\not\in \gamma_i$ and for every $\varepsilon>0$ we have that $\gamma_j$ is contained in the open ball $B(x, \varepsilon)$ for sufficiently large $j$, so given $y_{i_j}$ we just need to pick $y_{i_{j+1}}$ so that $\gamma_{i_{j+1}} \subseteq B(x, d(x,\gamma_{i_j}))$.

%For $1\le i\le g+1$,
%precompose the projection $\pi$ of Lemma~\ref{lem:auxHyp1} into a $(d-k)$-subspace with projective transformations $p_i$, each maps to infinity a point $y_i$ along some same ray from a point $x\in \dK$ to infinity, to get $\pi_i=\pi\circ p_i$. Then each $\gamma_{\pi_i}$ is a $(d-k-1)$-cycle.

Consider now the $(d-k-1)$-cycles $\gamma_1, \dots \gamma_{g+1}$. For $\varepsilon>0$ small enough, %and suitable choice of the $y_i$'s,
 the neighborhoods $\gamma_{i}+\varepsilon$ are pairwise disjoint and, for each $i$, $\gamma_{i}+\varepsilon$ deformation retracts to $\gamma_{i}$.
By Corollary~\ref{cor:C^1homology},
there is some $0<\varepsilon'<\varepsilon$ such that
the embedding of the induced complex $\Gamma_i$ on the vertices of $P$ in $\gamma_{i}+\varepsilon'$, into $\gamma_{i}+\varepsilon$, induces a surjection in homology.

It remains to show that for $\varepsilon$ small enough  and
for every $i\neq j$, there is no edge in $\dP$ between a vertex of $\Gamma_i$ and a vertex of $\Gamma_j$.  Once this is shown we get that
the complex $\Gamma=\cup_{1\le i\le g+1}\Gamma_i$ is an induced subcomplex of $\dP$, with $\tilde{\beta}_{d-k-1}(\Gamma) = \sum_{j=1}^{g+1} \tilde{\beta}_{d-k-1}(\Gamma_i) \ge g+1$, thus Theorem~\ref{thm:QLBT} finishes the proof.

Assume by contradiction there are approximating polytopes $(P_n)$ with $v_i(n)\in \Gamma_i(n)\subseteq P_n$,
$v_j(n)\in \Gamma_j(n)\subseteq P_n$, and $v_i(n)v_j(n)$ an edge of $P_n$.
Then there exist a subsequence $(P_{a_n})$ of $(P_n)$, a point $v_i\in \gamma_i$  with $v_i(a_n)\rightarrow v_i$ and a point $v_j\in \gamma_j$ with $v_j(a_n)\rightarrow v_j$. The segment $[v_i,v_j]$ is contained in $\dK$.
As $K$ is $C^1$, that is nonsingular, $[v_i,v_j]$ is contained in the unique hyperplane $H$ through $v_j$ that supports $K$. Thus,
%$[v_i,v_j]\subseteq H$,  so
 by the choice of $\pi_j$, also $v_i\in \gamma_{j}$, contradicting that $\gamma_{j}$ and $\gamma_{i}$ are disjoint.
\end{proof}

Let us remark that since the homology cycles $\gamma_i$ are represented by spheres, we can substitute, for self-containedness, the use of Theorem~\ref{thm:QLBT} in the proof of Theorem~\ref{thm:main2} by the following simple lemma.

Let $\Delta$ be a simplicial complex with a map $\varphi$ of its vertex set $V$ to $\R^d$, and let \[\widetilde{\varphi}:V\longrightarrow \R^d\times \{1\}\ \subset\ \R^{d+1}\] the homogenization of $\varphi$. An \emph{(affine) $k$-stress} is a map $\omega$ from the $(k-1)$-dimensional faces of $\Delta$ to $\R$ such that, for every $(k-2)$-face $\tau$ of $\Delta$, the \emph{Minkowski balancing condition} is satisfied, i.e.
\begin{equation*}
\sum_{\substack{\sigma:\ \sigma\ (k-1)\text{-face},\ \tau \subset \sigma}} \omega(\sigma)(\widetilde{\varphi}(\sigma\setminus \tau)) = 0\ \mod \mathrm{span}(\widetilde{\varphi}(\tau)),
\end{equation*}
namely, \ the sum on the left-hand side lies in the linear span of $\widetilde{\varphi}(\tau)$. We refer to Lee \cite{MR1384883} for a comprehensive introduction to the subject of affine stresses.

As such, a stress on a graph is the same as a $2$-stress (we will henceforth leave out the quantifier ''affine``). Moreover, it
%is immediate
turns out
that $k$-stresses are special $(k-1)$-cycles in the simplicial chain complex of $\Delta$ with real coefficients, see Ishida \cite{MR951199} and Tay--Whiteley \cite{MR1773196} for the associated homology theories and more background on the notions. We turn back to the problem at hand.

\begin{lemma}\label{lem:QLBT}
Let $\gamma$ denote a simplicial $(k-1)$-sphere on vertex set $W$. Assume $\gamma$ is realized as a subcomplex of the boundary $\Delta$ of a simplicial $d$-polytope $P$, where $k\le \frac{d}{2}$. Assume furthermore that the fundamental class of $\gamma$ defines a nontrivial homology class in $\tilde{H}_{k-1}(\Delta_W)$.
Then the simplicial neighborhood  \[\Gamma\ :=\ \{\sigma \in \Delta: \exists \tau \in \Delta,\ \sigma \subset \tau,\ \tau \cap \gamma\neq \emptyset\}\] of $\gamma$ in $\Delta$ supports a $k$-stress homologous to the fundamental class of $\gamma$ in $\Gamma$.
\end{lemma}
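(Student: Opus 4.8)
The plan is to invoke the Hard Lefschetz theorem for simplicial polytopes in the algebraic form due to McMullen, combined with the standard dictionary between the graded pieces of the Artinian reduction of the Stanley--Reisner ring and the spaces of affine stresses of $\partial P$. First I would fix a generic linear system of parameters $\theta_1,\dots,\theta_d$ and a generic degree-one element $\ell$ for the Stanley--Reisner ring $k[\Delta]$, and recall (via Lee \cite{MR1384883}, cf.\ also \cite{MR951199, MR1773196}) that the degree-$k$ part of $k[\Delta]/(\theta_1,\dots,\theta_d)$ is naturally identified with the space of affine $k$-stresses of the given geometric realization of $\Delta$; moreover, under this identification, the kernel of multiplication by $\ell^{d-2k+1}$ from degree $k$ to degree $d-k+1$ is exactly the space of $k$-stresses that are ``primitive'', and its dimension is $g_k(\Delta)$ by Hard Lefschetz. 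The key point is that $k$-stresses are $(k-1)$-cycles in the simplicial chain complex of $\Delta$ with real coefficients, and this cycle map is compatible with the maps induced by inclusions of subcomplexes.

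Next I would use the hypothesis that the fundamental class $[\gamma]\in \tilde H_{k-1}(\Delta_W)$ is nontrivial, where $W = V(\gamma)$. Because $\Gamma_W = \Delta_W$ (the simplicial neighborhood $\Gamma$ is, by construction, the union of all faces meeting $\gamma$, and its induced subcomplex on $W$ is $\Delta_W$ — more carefully I should restrict attention to how $\gamma$ sits inside $\Gamma$), the fundamental cycle of $\gamma$ is a simplicial $(k-1)$-cycle supported on $\Gamma$ whose class survives in $\tilde H_{k-1}(\Delta_W)$. I then want to promote this topological cycle to an \emph{affine} $k$-stress: the idea is that among all $(k-1)$-cycles supported on $\Gamma$ representing a fixed nonzero class in $\tilde H_{k-1}(\Delta_W)$, one can choose a representative satisfying the Minkowski balancing condition. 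Concretely, the space of affine $k$-stresses supported on $\Gamma$ surjects onto (a subspace of) $\tilde H_{k-1}(\Gamma)$ that contains the image of $[\gamma]$; this follows by the same Hard Lefschetz/partition-of-unity argument applied inside $\Gamma$ together with the fact that $\gamma$, being a $(k-1)$-sphere embedded in the boundary of the $d$-polytope with $k\le d/2$, is ``deep enough'' that its class is detected by a stress rather than killed. The cleanest route is: take the affine $k$-stress $\omega$ on all of $\Delta$ whose associated cycle is cohomologous to a primitive class pairing nontrivially with $[\gamma]$ (such exists precisely because $[\gamma]\ne 0$ in $\Delta_W$, by the surjectivity in Corollary~\ref{cor:C^1homology}-style reasoning applied abstractly), then project/restrict it to be supported on $\Gamma$ using that $\ell^{d-2k+1}\omega$ vanishes.

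I would organize the argument as: (1) set up the stress/cycle dictionary and record that it is functorial for subcomplex inclusions; (2) observe $\Gamma \supseteq \gamma$ and that the fundamental cycle of $\gamma$ is a simplicial $(k-1)$-cycle on $\Gamma$ with $[\gamma]\ne 0$ in $\tilde H_{k-1}(\Delta_W)$; (3) apply Hard Lefschetz to produce, in the degree-$k$ part of the Artinian reduction, a primitive element $\omega$ — i.e.\ an affine $k$-stress annihilated by $\ell^{d-2k+1}$ — whose underlying homology class in $\tilde H_{k-1}(\Delta_W)$ equals (a nonzero multiple of) $[\gamma]$; (4) check that $\omega$ can be taken supported on $\Gamma$, because a primitive stress, being in the kernel of the Lefschetz map, is already ``localized'' near its homological support, and by replacing $\omega$ with its component on $\Gamma$ one only changes it by a boundary, which can then be absorbed. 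The main obstacle I anticipate is step (4): controlling the \emph{support} of the stress one extracts from Hard Lefschetz. The homological statement ``$[\gamma]\ne 0$ in $\tilde H_{k-1}(\Delta_W)$'' gives a stress somewhere on $\Delta$, but showing it is homologous \emph{within $\Gamma$} to the fundamental class of $\gamma$ requires that the ``filling'' of $\gamma$ by a $k$-chain, together with the balancing correction, does not spill outside the neighborhood $\Gamma$ — this is exactly where the geometric fact that $\gamma$ is a full-dimensional sphere in a section of $\partial P$ (with $k\le d/2$) must be used, so that the relevant relative homology $\tilde H_k(\Gamma,\gamma)$ or the cokernel of $\tilde H_{k-1}(\gamma)\to \tilde H_{k-1}(\Gamma)$ behaves as needed; I would handle this by a direct Mayer--Vietoris / excision argument on $\Gamma$, or alternatively by citing the localization of primitive classes in \cite{AdipII}.
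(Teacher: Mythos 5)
There is a genuine gap, and it sits exactly where you flagged it: step (4). Hard Lefschetz (in McMullen's form) tells you that the space of affine $k$-stresses of $\partial P$ has dimension $g_k(P)$; it gives you no mechanism for producing a stress \emph{supported on the prescribed neighborhood} $\Gamma$ whose cycle class matches $[\gamma]$. Your assertion that a primitive stress ``is already localized near its homological support'' is unsubstantiated and is not a consequence of primitivity; and your fallback of ``citing the localization of primitive classes in \cite{AdipII}'' defeats the stated purpose of the lemma, which is precisely to give a self-contained substitute for the localization result of \cite{AdipII} (Theorem~\ref{thm:QLBT}). As written, step (3) already presupposes a surjection from stresses supported on $\Gamma$ onto a piece of $\tilde H_{k-1}$ containing $[\gamma]$ --- but that surjection \emph{is} the content of the lemma, so the argument is circular at its core. (The invocation of ``Corollary~\ref{cor:C^1homology}-style reasoning applied abstractly'' is also a non sequitur: that corollary concerns Hausdorff approximation, not stresses.)

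The mechanism the paper actually uses, and which is missing from your proposal, is twofold. First, when $\gamma$ is an \emph{induced} subcomplex, one covers it by the open stars of its vertices and runs the generalized Mayer--Vietoris (\v{C}ech/nerve) spectral sequence not for ordinary homology but for the Ishida stress complex \cite{MR951199, MR1773196}; since the nerve of this cover is $\gamma$ itself (by inducedness), this produces a surjection from $k$-stresses supported on $\bigcup_v \St_\Delta v = \Gamma$ onto $\tilde H_{k-1}(\gamma)$. This is where the support control comes from --- the cover is built from $\Gamma$ to begin with, so no localization of a global primitive class is ever needed. Second, for the general (non-induced) case of Lemma~\ref{lem:QLBT}, one barycentrically subdivides the faces meeting $\gamma$ so that the subdivided cycle becomes induced, applies the induced case, and then blows down; the delicate point is that stellar subdivisions create new stresses (Gysin pullbacks from face figures, in McMullen's decomposition \cite{MR1228132}), and one must check these are null-homologous in the punctured neighborhood $\widecheck{\Gamma}'$ so that the stress representing $[\gamma]$ survives the blowdown. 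Your proposal contains neither the nerve argument nor the subdivision/blowdown step, and the hypothesis ``$[\gamma]\neq 0$ in $\tilde H_{k-1}(\Delta_W)$'' (rather than ``$\gamma$ is induced'') is never actually used in a way that would make the non-induced case go through.
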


We refer the reader to the appendix for a proof of the lemma.
To finish the alternative proof of Theorem~\ref{thm:main2}, it suffices to recall the central corollary from the hard Lefschetz theorem for projective toric varieties together with the fact that stresses supported on disjoint vertex sets are linearly independent:

\begin{thm}{\rm{\cite[Theorems 6.1 \& 7.3 and p.431]{MR1228132}}}
For any simplicial $d$-polytope $P$, and $k\le \frac{d}{2}$,
\[g_k(P)\ =\ \dim \{\text{space of $k$-stresses supported in $P$}\}.\]
\end{thm}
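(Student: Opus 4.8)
The plan is to deduce the identity from the hard Lefschetz theorem for the face ring of $\partial P$, by translating affine stresses into graded components of an Artinian reduction of that ring. Put $\Delta=\partial P$ and let $\mathbb{R}[\Delta]=\mathbb{R}[x_v:v\in V(P)]/I_\Delta$ be the Stanley--Reisner ring, graded with $\deg x_v=1$; since $\Delta$ is a homology $(d-1)$-sphere, $\mathbb{R}[\Delta]$ is Gorenstein of Krull dimension $d$. Using the vertex coordinates of $P$ (so that $\varphi$ is the identity embedding of $V(P)$ into $\R^d$ and $\widetilde\varphi$ its homogenization), put $\theta_j=\sum_v\varphi_j(v)\,x_v$ for $j=1,\dots,d$. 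Since $P$ is a polytope these $\theta_j$ form a linear system of parameters, so $A:=\mathbb{R}[\Delta]/(\theta_1,\dots,\theta_d)$ is a graded Artinian Gorenstein algebra with $\dim_\R A_i=h_i(P)$, socle concentrated in degree $d$, and perfect Poincar\'e pairing $A_i\times A_{d-i}\to A_d\cong\R$. Let $\ell\in A_1$ be the class of the homogenizing form $\theta_0=\sum_v x_v$.

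First I would establish the stress dictionary of Lee \cite{MR1384883} and Tay--Whiteley \cite{MR1773196} (implicit already in McMullen \cite{MR1228132}): for the realization $\varphi$, the space of $k$-stresses supported in $P$ is canonically isomorphic, as a vector space, to $\operatorname{coker}\!\big(\,\ell\cdot\colon A_{k-1}\to A_k\,\big)$; dually, via the Gorenstein pairing, it is isomorphic to $\ker\!\big(\,\ell\cdot\colon A_{d-k}\to A_{d-k+1}\,\big)$. This is essentially bookkeeping: a $k$-stress $\omega$ is the same datum as the squarefree degree-$k$ element $\sum_\sigma\omega(\sigma)x^\sigma$ of $\mathbb{R}[\Delta]$, and the Minkowski balancing condition at every $(k-2)$-face is exactly the statement that this element, viewed in the graded dual of $A$, is annihilated by $\ell$; transporting through Poincar\'e duality yields the cokernel description. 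The construction is induced by the inclusion of squarefree monomials and is compatible with passage to induced subcomplexes, so it sends disjointly supported stresses to elements with disjoint ``supports'', preserving linear independence.

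Then I would invoke hard Lefschetz. Because $P$ is a polytope, $\ell$ is a Lefschetz element of $A$: identifying $A$ with the even cohomology $H^{2\bullet}(X_P;\R)$ of the projective simplicial toric variety of a rational polytope combinatorially equivalent to $P$ (or, to bypass rationality, arguing inside McMullen's polytope algebra, \cite{MR1228132}), the relevant ample class acts as a hard Lefschetz operator and may be taken to be $\ell$. Hence $\ell\cdot\colon A_{k-1}\to A_k$ is injective for all $k\le\tfrac d2$, and therefore
\[
\dim\{\text{$k$-stresses supported in }P\}=\dim\operatorname{coker}\!\big(\ell\cdot\colon A_{k-1}\to A_k\big)=\dim A_k-\dim A_{k-1}=h_k(P)-h_{k-1}(P)=g_k(P).
\]

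The hard part will be precisely this hard Lefschetz input, together with a point that is easy to overlook: it is needed for the linear form attached to $P$ itself, not merely for a generic realization of $\Delta$. A polytopal embedding need not sit at a generic point of the rigidity matroid, so the exact equality (rather than an inequality $\dim\ge g_k$) for the polytope's own stress space genuinely depends on the ample-class / polytope-algebra argument, and one must check that the Lefschetz form can be chosen to be the homogenizing form $\theta_0$ governing the dictionary of the first step — equivalently, that $\theta_0$ already achieves the minimal possible cokernel dimension in these degrees. The remaining ingredients — the Gorenstein structure of $A$, Poincar\'e duality, and the translation between stresses and cokernels — are routine graded commutative algebra and require nothing about projectivity.
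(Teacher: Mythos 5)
The paper gives no proof of this statement --- it is quoted verbatim from McMullen --- and your sketch reconstructs precisely the derivation that citation encodes: the Lee/Tay--Whiteley dictionary identifying affine $k$-stresses with $\operatorname{coker}\bigl(\ell\cdot\colon A_{k-1}\to A_k\bigr)$ for the Artinian reduction by the polytope's own vertex coordinates (after translating the origin into the interior of $P$, which is what makes $\theta_1,\dots,\theta_d$ an l.s.o.p.\ and identifies $\theta_0=\sum_v x_v$ with the ample class of the polar dual), combined with hard Lefschetz for that specific $\ell$ proved via McMullen's polytope algebra so as to cover irrational realizations. Your closing caveat --- that genericity of the realization is not enough and the Lefschetz input must hold for the polytope's own linear form, which is exactly what the cited Theorems 6.1 and 7.3 supply --- is the correct identification of where the real content lies.
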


%\section{Geometric triangulations of links and hyperplane slices}
%The following lemma gives a strategy to produce refined lower bounds for convex bodies smoother than simply non-singular.

\section{Refined bounds for $C^2$-convex bodies}
\label{sec:C^2}
In the case that $K$ is of type $C^2$ the asymptotic growth of $g_k$ can be bounded below. We start by computing these bounds for approximations of the unit ball and then use tricks of B\"or\"oczky to pass to the case of general $C^2$-convex body. The idea is to use the quantitative lower bound Theorem~\ref{thm:QLBT} (or Lemma~\ref{lem:QLBT}) again and to provide such bounds by finding many cycles in $\partial K$ that are disjoint and far from each other.

Let $\B$ be the unit ball in $\R^d$. The following lemma is known:
\begin{lemma}\label{lem:sphCod}
For every sufficiently small $\varepsilon$ there is a subset $A$ of the boundary of $\B$ with $|A|=\Omega(\varepsilon^{1-d})$ and distance $d(x,y) \ge \varepsilon$ for every pair of points $x,y\in A$.
\end{lemma}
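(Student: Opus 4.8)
The plan is to produce a maximal $\varepsilon$-separated set on $\Sp^{d-1}$ and show it has the claimed cardinality by a standard volume (packing) argument. First I would let $A = \{x_1,\dots,x_N\} \subseteq \Sp^{d-1}$ be a maximal subset with the property that $d(x_i,x_j) \ge \varepsilon$ for all $i \ne j$; such a set exists and is finite since $\Sp^{d-1}$ is compact. By maximality, the open balls $B(x_i,\varepsilon)$ cover $\Sp^{d-1}$: if some $y \in \Sp^{d-1}$ were at distance $\ge \varepsilon$ from every $x_i$, we could adjoin $y$ to $A$, contradicting maximality. This gives the \emph{lower} bound on $N$.

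Next I would turn the covering statement into a numerical estimate. Each spherical cap $\Sp^{d-1} \cap B(x_i,\varepsilon)$ has $(d-1)$-dimensional surface measure $O(\varepsilon^{d-1})$ as $\varepsilon \to 0$ (for small $\varepsilon$ the cap is comparable, up to constants depending only on $d$, to a Euclidean $(d-1)$-ball of radius $\varepsilon$; one may also simply bound its measure by $c_d\varepsilon^{d-1}$ for an explicit constant). Since these caps cover $\Sp^{d-1}$, whose total surface measure is the positive constant $d\cdot V_d$, we get
\[
d\cdot V_d \ \le\ \sum_{i=1}^N \mathrm{vol}_{d-1}\bigl(\Sp^{d-1}\cap B(x_i,\varepsilon)\bigr)\ \le\ N\cdot c_d\,\varepsilon^{d-1},
\]
hence $N \ge (d\,V_d/c_d)\,\varepsilon^{-(d-1)} = \Omega(\varepsilon^{1-d})$, which is the desired bound. (A matching upper bound $N = O(\varepsilon^{1-d})$, not needed here, would follow by the dual argument: the disjoint caps $\Sp^{d-1}\cap B(x_i,\varepsilon/2)$ each have measure $\ge c_d'\,\varepsilon^{d-1}$ and fit inside $\Sp^{d-1}$.)

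This argument is entirely routine; there is no real obstacle. The only point requiring a word of care is the comparison between the intrinsic (geodesic) and ambient (Euclidean) metrics on $\Sp^{d-1}$ and the asymptotic estimate $\mathrm{vol}_{d-1}(\Sp^{d-1}\cap B(x,\varepsilon)) = \Theta(\varepsilon^{d-1})$ for the cap, but for small $\varepsilon$ these are standard and the two metrics are bi-Lipschitz equivalent with constants tending to $1$, so the stated order of magnitude is unaffected. Since the lemma only claims a growth rate up to a multiplicative constant, we need not optimize any of the constants $c_d$.
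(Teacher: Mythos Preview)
Your argument is correct: a maximal $\varepsilon$-separated set on $\Sp^{d-1}$ together with the standard volume/covering estimate gives $|A|=\Omega(\varepsilon^{1-d})$, and the comparison between Euclidean and geodesic metrics and the cap-volume asymptotic $\Theta(\varepsilon^{d-1})$ are indeed routine for small $\varepsilon$.

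The paper, however, does something different and more explicit: it takes the $\varepsilon$-spaced orthogonal lattice in $\R^{d-1}\times\{0\}$, intersects it with the unit ball, and lifts the resulting $\Theta(\varepsilon^{1-d})$ points vertically to the upper hemisphere. Since the lift $(x',0)\mapsto (x',\sqrt{1-|x'|^2})$ can only increase Euclidean distances, the $\varepsilon$-separation is preserved automatically. Your approach is the standard packing argument and has the virtue of working verbatim on any compact Riemannian manifold; the paper's construction is concrete and avoids any appeal to cap-volume estimates or maximality, at the cost of being specific to the round sphere. Either is perfectly adequate here, since the lemma only asks for the order of magnitude and not for an optimal constant.
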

\begin{proof} Pick an orthogonal basis of $\R^{d-1}\times\{0\}$ with vectors of length $\varepsilon$. Consider the intersection of the lattice generated by this basis and $\B$ and lift it to the boundary of $\B$ to obtain the set $A$. That $A$ works.
\begin{figure}[htb]
\begin{center}
\includegraphics[scale = 0.4]{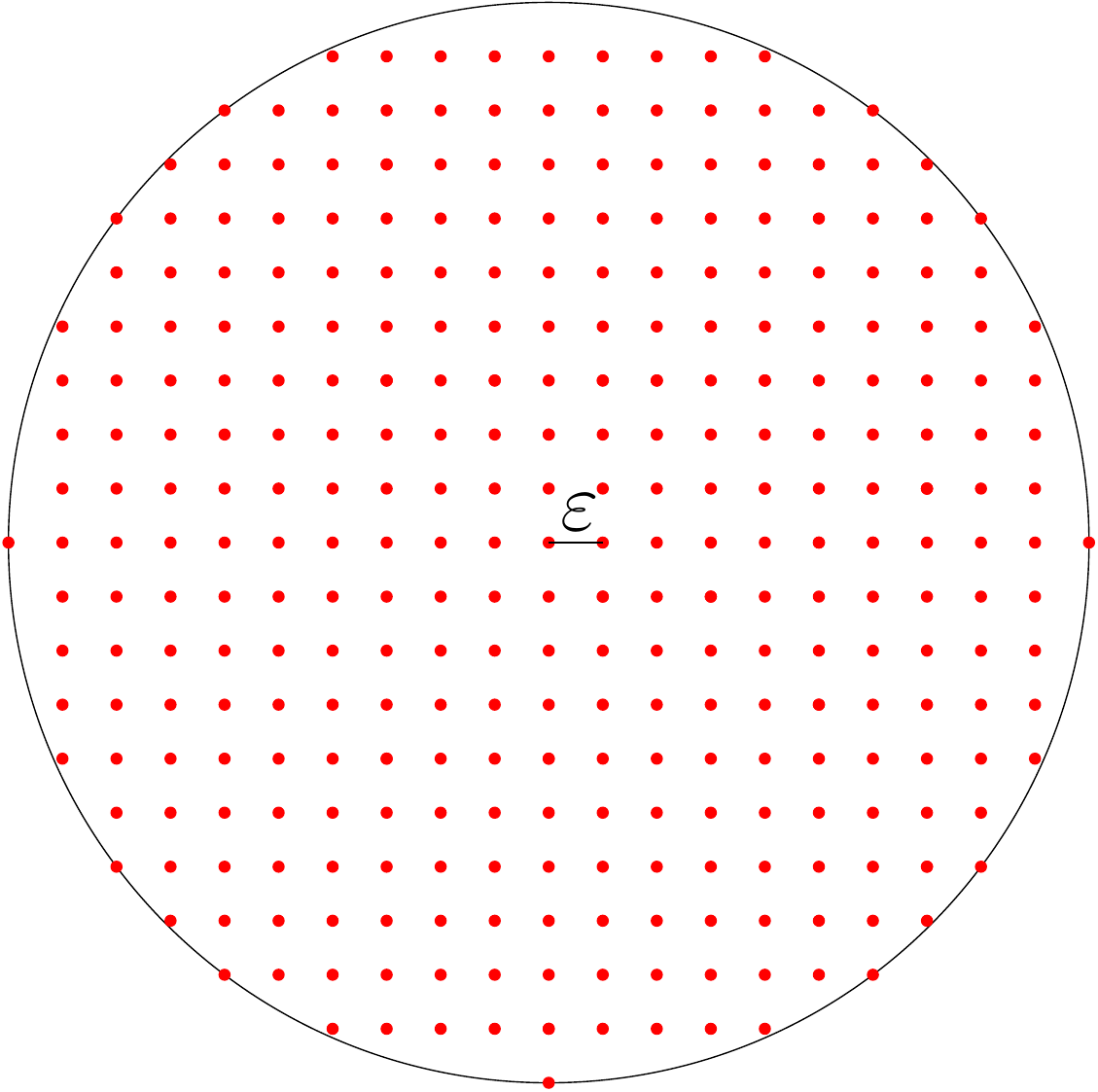}
\caption{The set $A$ for $\mathbb{S}^2$ projected to $\mathbb{R}^2$.}
\end{center}
\end{figure}
\end{proof}

We are now in a position to provide quantitative lower bounds for the $g$-numbers when approximating the unit ball.

\begin{thm}\label{thm:QSLBT}
Let $k\le \frac d2$.
%If $P$ is not a $(k-1)$-stacked polytope,
If $\dH(P,\B)$ is small enough
then
\begin{equation}\label{eqn:QSLBT}
g_k(P) = \Omega\left(\delta^H(P, \B)^{\frac{1-d}2}\right).
\end{equation}
\end{thm}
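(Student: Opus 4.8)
The plan is to mimic the proof of Theorem~\ref{thm:d=4} and Theorem~\ref{thm:main2}, but to replace the "finitely many disjoint caps" argument by a \emph{quantitative packing} of disjoint cycles in $\dB$, using Lemma~\ref{lem:sphCod}. The point is that for the unit ball all the constructions of Section~\ref{sec:C^1} (the projections $\pi$, the cycles $\gamma_\pi$, the deformation retracts $\gamma_\pi + \varepsilon \to \gamma_\pi$, the diameter estimate of Lemma~\ref{lem:C^1used}) are \emph{homogeneous}: the sphere $\Sp^{d-1}$ looks the same around every point, so the $\varepsilon'$ coming out of Corollary~\ref{cor:C^1homology} depends only on $\varepsilon$, not on which extremal point of $\B$ we center the construction at.

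First I would fix a small parameter and run the following. Given $\rho>0$ small, pick by Lemma~\ref{lem:sphCod} a set $A\subseteq \dB$ with $|A| = \Omega(\rho^{1-d})$ and pairwise distances $\ge \rho$. Around each $x\in A$ perform the Section~\ref{sec:C^1} construction: a projection $\pi^{(x)}$ onto a $(d-k)$-dimensional subspace and a projective transformation pushing a nearby point of the ray through $x$ to infinity, producing a $(d-k-1)$-cycle $\gamma^{(x)}\subseteq \dB$ lying in a ball of radius $O(\rho)$ around $x$ (shrinking $y_i\to x$, the $\gamma^{(x)}$ can be taken inside $B(x,\rho/10)$, say). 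Since the points of $A$ are $\rho$-separated, the cycles $\gamma^{(x)}$ are pairwise at distance $\ge \rho/2$, hence the neighborhoods $\gamma^{(x)} + \varepsilon$ are pairwise disjoint for $\varepsilon < \rho/4$, and each deformation retracts onto $\gamma^{(x)}$. Now apply Corollary~\ref{cor:C^1homology}: there is $\varepsilon' = \varepsilon'(\varepsilon)>0$ (uniform in $x$, by homogeneity of the sphere) so that if $\dH(P,\B)<\varepsilon'$ then the induced subcomplex $\Gamma^{(x)}$ of $\dP$ on the vertices of $P$ lying in $\gamma^{(x)}+\varepsilon'$ embeds into $\gamma^{(x)}+\varepsilon$ inducing a surjection in homology, so $\tilde\beta_{d-k-1}(\Gamma^{(x)})\ge 1$.

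Next I would check that $\Gamma:=\bigcup_{x\in A}\Gamma^{(x)}$ is an induced subcomplex of $\dP$ with $\tilde\beta_{d-k-1}(\Gamma)\ge |A|$. This needs: (a) no edge of $\dP$ joins a vertex of $\Gamma^{(x)}$ to a vertex of $\Gamma^{(x')}$ for $x\ne x'$, and (b) the $\Gamma^{(x)}$ are vertex-disjoint. For (b), vertices of $\Gamma^{(x)}$ lie in $\gamma^{(x)}+\varepsilon'\subseteq B(x,\rho/10+\varepsilon')$, which are pairwise disjoint. For (a), by the quantitative edge-length estimate (any polytope $\varepsilon'$-close to $\B$ has all edges shorter than some explicit function of $\varepsilon'$ going to $0$ — this is the estimate promised near Lemma~\ref{lem:links2}, and will be made effective here), choosing $\varepsilon'$ small enough forces every edge to be much shorter than $\rho/3$, so an edge cannot cross between two $(\rho/3)$-separated regions. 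Then $\tilde\beta_{d-k-1}(\Gamma)=\sum_x \tilde\beta_{d-k-1}(\Gamma^{(x)})\ge|A|=\Omega(\rho^{1-d})$, and Theorem~\ref{thm:QLBT} (or Lemma~\ref{lem:QLBT}, since the $\gamma^{(x)}$ are spheres) gives $g_k(P)\ge \tilde\beta_{d-k-1}(\Gamma)=\Omega(\rho^{1-d})$. Finally, to convert this into a bound in terms of $\delta := \delta^H(P,\B)$, I would make the dependence $\varepsilon'=\varepsilon'(\rho)$ explicit — one expects $\varepsilon' = \Theta(\rho^{2})$ for the sphere, since a chord of length $\ell$ on $\Sp^{d-1}$ bulges from its endpoints' tangent configuration by $\Theta(\ell^{2})$, which is exactly the mechanism behind both the edge-length bound and the diameter estimate of Lemma~\ref{lem:C^1used}. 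Thus, given $\delta$ small, set $\rho \asymp \delta^{1/2}$; then $\dH(P,\B)=\delta<\varepsilon'(\rho)$ holds, and $g_k(P) = \Omega(\rho^{1-d}) = \Omega(\delta^{(1-d)/2})$, as claimed.

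The main obstacle is the last step: honestly controlling how small $\varepsilon'$ must be as a function of the separation scale $\rho$, i.e.\ proving $\varepsilon'(\rho)=\Omega(\rho^{2})$ (or at least $\Omega(\rho^{2+o(1)})$, which still yields the stated exponent after adjusting constants). This requires quantitative versions of two qualitative arguments from Section~\ref{sec:C^1}: (i) the diameter estimate of Lemma~\ref{lem:C^1used}, now with an explicit modulus — for the round sphere, the tangent hyperplane varies Lipschitz-ly with the base point, so a chord whose endpoints are within $\varepsilon'$ of $\gamma^{(x)}$ but which strays $\varepsilon$ away from $\gamma^{(x)}$ forces $\varepsilon = O(\varepsilon'^{1/2} + (\text{chord length}))$, bounding the chord length from below and hence, via the curvature of $\Sp^{d-1}$, bounding $\varepsilon'$; and (ii) the observation that $\gamma^{(x)}+\varepsilon$ retracts to $\gamma^{(x)}$ for all $\varepsilon = O(\rho)$, which is clear since $\gamma^{(x)}$ is a round-ish $(d-k-1)$-sphere of radius $\Theta(\rho)$ (one can in fact take $\gamma^{(x)}$ to be a small round subsphere by choosing the projective maps symmetrically). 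Everything else is bookkeeping; the homogeneity of $\B$ is what makes the constants uniform across the $|A|=\Omega(\rho^{1-d})$ copies, and that is the crucial structural input that a general $C^1$-body lacks and that a $C^2$-body supplies through bounded curvature.
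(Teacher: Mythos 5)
Your proposal is correct and follows essentially the same route as the paper: a packing of $\Omega(\delta^{(1-d)/2})$ points of $\Sp^{d-1}$ at separation scale $\delta^{1/2}$ via Lemma~\ref{lem:sphCod}, one $(d-k-1)$-cycle per point, disjointness of the induced subcomplexes from an $O(\delta^{1/2})$ edge-length bound, and then Theorem~\ref{thm:QLBT}. The quantitative step you flag as the main obstacle --- that the construction at scale $\rho$ tolerates Hausdorff error $\Theta(\rho^2)$ --- is handled in the paper by a one-line Pythagorean-theorem estimate (an edge of a polytope $\varepsilon$-close to $\B$ has length at most $4\varepsilon^{1/2}$), and the cycles are taken to be explicit round slices $L_x\cap\partial\B$ with hard-coded constants rather than outputs of the Section~\ref{sec:C^1} projective machinery, which makes the uniformity you obtain from homogeneity immediate.
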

\begin{proof}
The idea is to intersect $P$ with $(d-k)$-dimensional affine subspaces, where each subspace is close to a different point of a set $A$ from the previous lemma. Then the induced complex on vertices of $P$ that are close enough to these intersections will have $\tilde{\beta}_{d-k-1} \ge |A|$ (with contribution of at least one $(d-k-1)$-cycle per intersection). Here are the details.

Let $\varepsilon > 0$ be sufficiently small so by Lemma~\ref{lem:sphCod} there is a set $A$ of points of the boundary of $\B$ with cardinality $\Omega\left(\varepsilon^{\frac{1-d}2}\right)$ such that the $d(x,y) >35\varepsilon^{\frac 12}$ for every $x, y \in A$.

For each $x \in A$  let $H_x$ be the affine hyperplane `below $x$' such that $d(x,y) = 11\varepsilon^{\frac 12}$ for every $y\in  \partial \B\cap H_x$, and let $L_x$ be any $(d-k)$-dimensional subspace contained in $H_x$ that passes through the center $u(x)$ of the ball $H_x\cap \B$.
%A simple computation shows that $|u(x)-x| = \frac{121}2\varepsilon$ the radius of $H_x\cap \B$ is at least $10\varepsilon^{\frac12}$. In particular, if $z\in H_x\cap \B$ is such that $d(v, \partial\B) \le \varepsilon$, then $d(v, H_x\cap \partial \B) < \varepsilon^{\frac 12}$.

%that intersects the line between $x$ and the origin. Notice and the distance between points in $H_x\cap \B$ and $H_y\cap \B$ is at least $8\varepsilon^{\frac 12}$ for any $x,y \in A$. Also the distance from any point in $\partial\B \cap H_x$ to the line spanned by $x$ is bounded below by $10\varepsilon^{\frac 12}$.

Let $P$ be a simplicial polytope with $\delta^H(P,\B) < \varepsilon$ and boundary complex $\Delta=\dP$. By rescaling $P$ (multiplying by $(1+\varepsilon)^{-1}$) we obtain a polytope contained in $\B$, combinatorially equivalent to $P$ and whose distance to $\B$ is smaller than $2\varepsilon$, so it is enough to assume that $P\subseteq \B$. If $\varepsilon$ is small enough, then the length of an edge $e\in \partial P$ is bounded above by $4\varepsilon^{\frac 12}$. To see this, apply the Pythagorean theorem to the triangle in the plane spanned by $e$ and the origin, whose vertices are the origin, the intersection point of the line spanned by $e$ and the line orthogonal to it through the origin, and the appropriate end point of $e$.

For each $x\in A$ let $W_x$ be the set of all vertices of $P$ contained in a face that intersects $L_x$. Then for any vertex $v\in W_x$, $d(v,L_x)\le 4\varepsilon^{\frac 12}$, as it is bounded by the length of the longest edge of a face containing $v$ that intersects $L_x$.

%Let $Z_x$ by the affine space orthogonal to $L_x$ that passes through $\text{span}\{x\} \cap L_x$.

%Any vertex $v\in W_x$, there is a face $F$ of $P$ that contains a point $z$ of $L_x$. Notice that $d(v, x) \le 4\varepsilon^{\frac 12}$ as it is bounded by the length of the longest edge of $F$. It is easy to see that $d(z, \partial \B \cap L_x)\le \varepsilon^{\frac 12}$, thus there is $w\in \partial \B \cap L_x$ with $d(v, w) \le 5\varepsilon^{\frac 12}$. Then $d(w, Z_x) \ge 10\varepsilon^{\frac 12}$, from which we get that $d(v, Z_x)\ge 5\varepsilon^{\frac 12}$. If follows as in the proof of Theorem~\ref{thm:main2} that $\beta_{d-k-1} (\Delta_{W_x}) \ge 1$.

Let $\Delta_W$ be the complex induced by the vertices in $W:=\bigcup_{x\in A} W_x$. For points $x\neq y$ in $A$, and vertices $v\in W_x,\ u\in W_y$,
the triangle inequality yields $|v-u| \ge \varepsilon^{\frac 12}(35-11-4-11-4) = 5\varepsilon^{\frac 12}$. As the longest edge in $P$ has length $\le 4\varepsilon^{\frac 12}$, we conclude that $\Delta_W$ is the disjoint union of the subcomplexes $\Delta_{W_x}$, for all $x\in A$.

\begin{figure}[htb]
\begin{center}
\includegraphics[scale = 0.5]{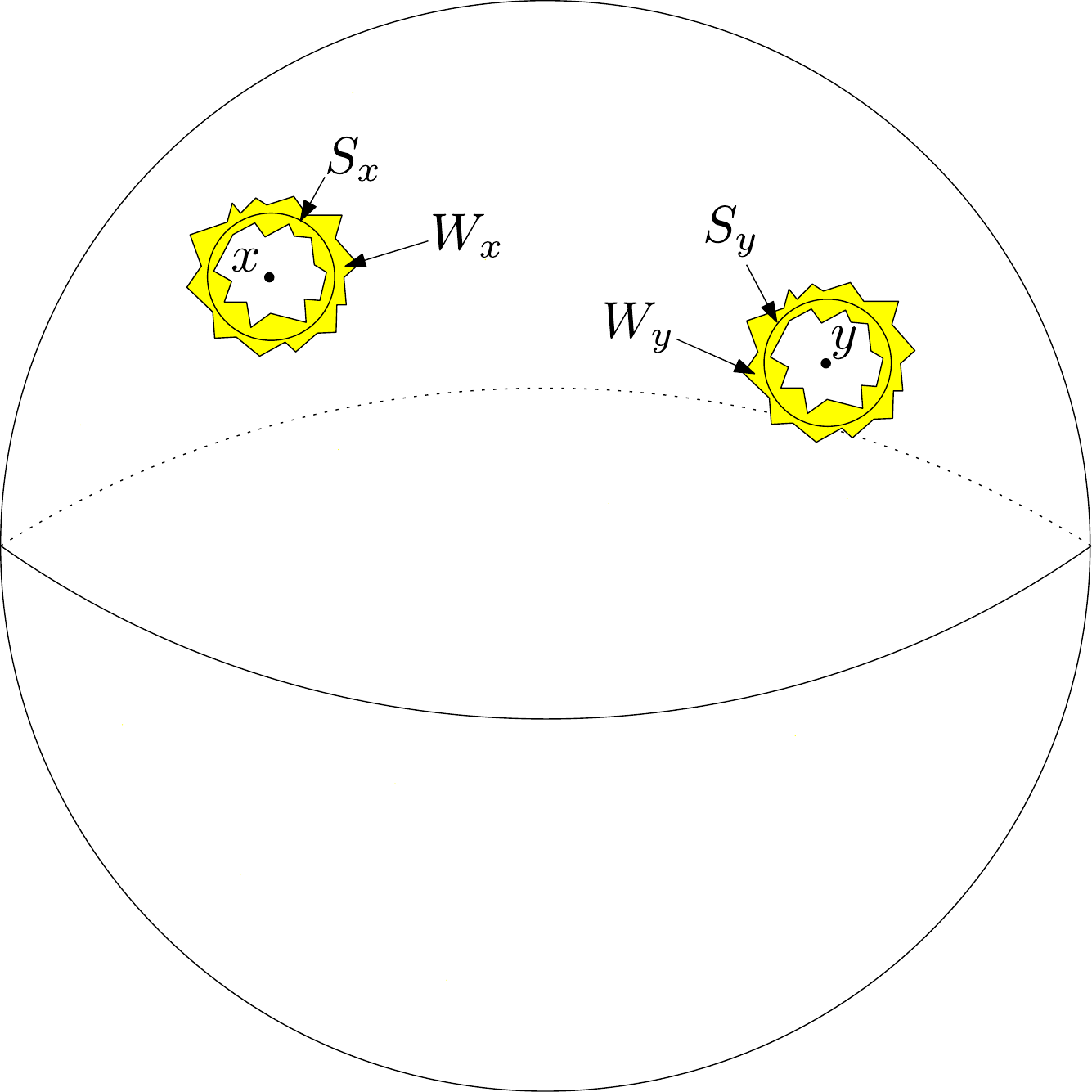}
\caption{Any $W_x$ and $W_y$ are disjoint and far from each other.}
\end{center}
\end{figure}
We claim that, for $\varepsilon>0$ small enough,
$\beta_{d-k-1}(\Delta_{W_x}) \ge 1$.
The argument is similar to, and simpler than, the one we used in the proof of Theorem~\ref{thm:main2}:
let $S_x:=L_x\cap \partial \B$. Then clearly for small enough $\varepsilon>0$ there exists $\varepsilon'>0$ such that $S_x+\varepsilon'$ contains the strip $\partial\B \cap (L_x+4\varepsilon^{\frac12})$ and is homotopy equivalent to $S_x$.
Then the composition of the following maps induces an isomorphism in homology: \[ \partial P \cap L_x \hookrightarrow \Delta_{W_x} \hookrightarrow S_x+\varepsilon',\]
where both ends are nontrivial singular $(d-k-1)$-cycles.
Thus $\tilde{\beta}_{d-k-1}(\Delta_{W_x})\ge 1$.

It then follows from Theorem~\ref{thm:QLBT} that
\begin{equation} \label{eqn:Q}
g_k(P) \ge \beta_{d-k-1}(\Delta_W) = \sum_{x\in A} \beta_{d-k-1}(\Delta_{W_x}) \ge |A| = \Omega\left(\varepsilon^{\frac{1-d}2}\right).
\end{equation}
\end{proof}
In fact, instead of using Theorem~\ref{thm:QLBT}, it suffices to use Lemma~\ref{lem:QLBT}, as taking a $(k-1)$-sphere in each of
 the $S_x$'s gives $|A|$ pairwise disjoint $(k-1)$-spheres and thus they correspond to linearly independent $k$-stresses.

\begin{cor}\label{lem}
Let $E$ be an ellipsoid and let $k\le \frac d2$.
%If $P$ is not a $(k-1)$-stacked polytope,
If $\dH(P,E)$ is small enough
then
\begin{equation}\label{eqn:QSLBT2}
g_k(P) = \Omega\left(\delta^H(P, E)^{\frac{1-d}2}\right).
\end{equation}
\end{cor}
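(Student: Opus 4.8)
The plan is to reduce the ellipsoid case to the already-established ball case, Theorem~\ref{thm:QSLBT}, via an affine transformation. Let $T$ be an invertible affine map with $T(\B)=E$; write $T=A+b$ where $A$ is a linear map and $b$ a translation. The key observations are that (a) $T$ maps simplicial polytopes to combinatorially equivalent simplicial polytopes, so $g_k$ is unchanged: $g_k(P)=g_k(T^{-1}P)$; and (b) the Hausdorff distance is distorted by $T^{-1}$ in a controlled, bi-Lipschitz way, namely $\|A^{-1}\|^{-1}\cdot\dH(P,E)\le \dH(T^{-1}P,\B)\le \|A^{-1}\|\cdot\dH(P,E)$, where $\|A^{-1}\|$ is the operator norm of the linear part of $T^{-1}$. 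Indeed, for any bounded sets $X,Y$ and any linear map $L$, $\dH(LX,LY)\le\|L\|\,\dH(X,Y)$ directly from the definition of $\dH$ via suprema of distances, and applying this to $L=A^{-1}$ and to $L=A$ gives the two-sided bound.

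First I would fix such a $T$ (it exists since any ellipsoid is the image of the unit ball under an invertible affine map), and set $c:=\|A^{-1}\|$, a positive constant depending only on $E$. Given $P$ with $\dH(P,E)$ small, let $Q:=T^{-1}(P)$, which is a simplicial polytope with $\dH(Q,\B)\le c\,\dH(P,E)$; in particular $\dH(Q,\B)$ is small once $\dH(P,E)$ is. Applying Theorem~\ref{thm:QSLBT} to $Q$ yields
\begin{equation*}
g_k(Q)=\Omega\!\left(\dH(Q,\B)^{\frac{1-d}{2}}\right).
\end{equation*}
Now $\dH(Q,\B)\le c\,\dH(P,E)$ and the exponent $\frac{1-d}{2}$ is negative (since $k\le \frac d2$ forces $d\ge 2$), so $\dH(Q,\B)^{\frac{1-d}{2}}\ge c^{\frac{1-d}{2}}\,\dH(P,E)^{\frac{1-d}{2}}$. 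Combining with $g_k(P)=g_k(Q)$ gives $g_k(P)=\Omega\!\left(\dH(P,E)^{\frac{1-d}{2}}\right)$, where the implied constant absorbs the factor $c^{\frac{1-d}{2}}$, which is harmless as it depends only on $E$ and $d$, not on $P$.

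There is no real obstacle here; the only points requiring a line of care are: checking that the operator-norm inequality for $\dH$ under a linear map is valid (it is, and is standard — it follows because a linear map $L$ satisfies $|Lx-Ly|\le\|L\|\,|x-y|$, so balls around points scale accordingly), and noting that translations are isometries for $\dH$ so the translation part $b$ of $T$ plays no role. One should also note that the constant in the $\Omega$ is allowed to depend on $E$ (equivalently, on the eccentricities of $E$), which is consistent with the statement since the curvature-dependent constant in B\"or\"oczky's Theorem~\ref{thm:Boro} likewise varies with the body. If one wished to make the dependence explicit, $c$ can be taken to be the reciprocal of the smallest semi-axis of $E$ after centering, but the statement as phrased does not require this.
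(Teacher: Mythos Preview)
Your proof is correct and follows exactly the paper's approach: apply an affine transformation sending $E$ to $\B$, use that it preserves the combinatorial type (hence $g_k$) and distorts Hausdorff distance by at most a constant factor, and invoke Theorem~\ref{thm:QSLBT}. You have simply supplied the explicit bi-Lipschitz bound $\dH(T^{-1}P,\B)\le \|A^{-1}\|\,\dH(P,E)$ and the observation that the exponent $\frac{1-d}{2}$ is negative, details the paper leaves implicit in the phrase ``distances are preserved up to a constant.''
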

\begin{proof}
There is an affine transformation that maps $E$ to $\B$. Affine transformations map any polytope to a combinatorially equivalent polytope and the distances are preserved up to a constant, so the result follows from Theorem~\ref{thm:QSLBT}.
\end{proof}

\begin{thm}\label{thm:main3}
Let $K$ be a $C^2$-convex body and let $k\le \frac d2$.
%If $P$ is not a $(k-1)$-stacked polytope,
If $\dH(P,K)$ is small enough
then
\begin{equation}\label{eqn:QSLBT3}
g_k(P) = \Omega\left(\delta^H(P, K)^{\frac{1-d}2}\right).
\end{equation}
\end{thm}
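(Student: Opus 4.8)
The plan is to deduce Theorem~\ref{thm:main3} from Corollary~\ref{lem} (the ellipsoid case) by a localization argument in the spirit of B\"or\"oczky \cite{MR1770932}. Since $K$ is $C^2$, at every boundary point $x\in\dK$ the boundary has a well-defined second fundamental form; because $K$ has nonempty interior and is compact, $\dK$ has positive reach from inside in a uniform sense, so there is a global radius $R>0$ such that at each point $x\in\dK$ an ellipsoid $E_x$ of principal radii bounded below by some fixed $\rho>0$ (depending only on $K$, via the maximum of the principal curvatures) can be placed so that $E_x\subseteq K$ and $x\in\dd E_x$, with the two boundaries $C^2$-tangent of second order at $x$. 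This is exactly the ``rolling ball / osculating ellipsoid from inside'' estimate that underlies B\"or\"oczky's asymptotics, and I would invoke the $C^2$ hypothesis only here.

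Next I would set up the counting. Fix a large collection of boundary points $x_1,\dots,x_N\in\dK$ that are pairwise $\Theta(\sqrt{\varepsilon})$-separated, where $\varepsilon=\dH(P,K)$; by a standard packing argument on the $(d-1)$-dimensional hypersurface $\dK$ one can take $N=\Omega(\varepsilon^{\frac{1-d}{2}})$. For each $x_i$, let $E_i:=E_{x_i}$ be the inscribed osculating ellipsoid as above. The key geometric point, which I would verify from the second-order tangency, is that within a cap of radius $c\sqrt{\varepsilon}$ around $x_i$, the boundaries $\dK$ and $\dd E_i$ differ by only $O(\varepsilon)$ in the normal direction; hence a polytope $P$ with $\dH(P,K)<\varepsilon$ also $O(\varepsilon)$-approximates $E_i$ \emph{locally near $x_i$}. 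I would then run the argument of Theorem~\ref{thm:QSLBT}/Corollary~\ref{lem} but localized: slice $E_i$ (equivalently $P$, near $x_i$) by a $(d-k)$-dimensional affine subspace $L_i$ sitting ``below'' $x_i$ inside a cap of controlled size, so that $L_i\cap\dd E_i$ is a genuine $(d-k-1)$-sphere $S_i$ lying entirely inside the cap; the induced subcomplex $\Delta_{W_i}$ on the vertices of $P$ near $L_i$ carries a nontrivial $(d-k-1)$-cycle mapping onto $S_i$ by the same homotopy-retraction argument used in Theorem~\ref{thm:QSLBT}. Because the $x_i$ are $\Theta(\sqrt\varepsilon)$-separated and all edges of $P$ have length $O(\sqrt\varepsilon)$ (again by the Pythagorean estimate, since $P$ is squeezed between concentric-type bodies at distance $\varepsilon$), for distinct $i,j$ the vertex sets $W_i$ and $W_j$ are disjoint and there is no edge of $\dP$ between them, so $\Delta_W=\bigsqcup_i\Delta_{W_i}$ with $\tilde\beta_{d-k-1}(\Delta_W)\ge N$. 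Theorem~\ref{thm:QLBT} (or Lemma~\ref{lem:QLBT}, using that the $S_i$ are spheres and hence yield linearly independent $k$-stresses) then gives $g_k(P)\ge N=\Omega(\varepsilon^{\frac{1-d}{2}})$.

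The main obstacle is making the localization uniform and honest: one must choose a single scale $c\sqrt\varepsilon$ for the caps that simultaneously (a) is large enough that the spheres $S_i=L_i\cap\dd E_i$ have radius $\Omega(\sqrt\varepsilon)$ and hence are not destroyed by $O(\sqrt\varepsilon)$-long edges, yet (b) is small enough that the $C^2$ comparison between $\dK$ and $\dd E_i$ has error $o(\sqrt\varepsilon)$ in the relevant sense and the caps around distinct $x_i$ stay disjoint, and (c) the constants $\rho$, $R$, and the implied constants in the packing bound depend only on $K$, not on $i$. Both (a) and (b) hold because the osculating ellipsoid agrees with $\dK$ to second order, so the normal discrepancy over a cap of radius $r$ is $O(r^2)=O(\varepsilon)$ while the sphere radius is $\Theta(r)=\Theta(\sqrt\varepsilon)$; the separation constant $35$ in Theorem~\ref{thm:QSLBT} can simply be enlarged to absorb the extra $O(\varepsilon)$ slack. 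Once these uniform choices are fixed, the remainder is a verbatim repetition of the disjointness-and-no-edges bookkeeping from the proof of Theorem~\ref{thm:QSLBT}, and the conclusion follows. I would also remark, as the paper does for the sphere, that the bound is asymptotically tight: combined with B\"or\"oczky's Theorem~\ref{thm:Boro} and the $g$-theorem upper-bound side, the exponent $\frac{1-d}{2}$ cannot be improved, matching the number of vertices needed for an $\varepsilon$-approximation.
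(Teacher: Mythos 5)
Your overall strategy---transfer the sphere/ellipsoid count of Theorem~\ref{thm:QSLBT} and Corollary~\ref{lem} to $K$ via osculating quadrics---is the right one and is the same in spirit as the paper's. But there is a genuine gap in your first step: you claim that at \emph{every} point $x\in\dK$ one can place an inscribed ellipsoid $E_x$ with principal radii uniformly bounded that is tangent to $\dK$ \emph{to second order} at $x$. For a general $C^2$-convex body this is false wherever the Gaussian curvature vanishes (a $C^2$ body may have parabolic points, or even flat regions of its boundary): a genuine ellipsoid through $x$ has strictly positive principal curvatures there, so it cannot agree with the second fundamental form of $\dK$ at such a point, and no uniform lower bound $\rho$ on its radii can be extracted from the $C^2$ hypothesis alone (only an \emph{upper} bound on the curvatures of $\dK$, hence a rolling ball tangent to first order, is automatic). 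The same degeneracy breaks your later claim that \emph{all} edges of $P$ have length $O(\sqrt{\varepsilon})$: if $\dK$ contains a flat piece, an $\varepsilon$-approximating $P$ can have edges of length $\Theta(1)$ there, so the global disjointness bookkeeping for the $W_i$ does not go through verbatim. Your argument as written proves the theorem only for $C^2_+$ bodies.

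The paper's proof avoids this by localizing at a \emph{single} point $x\in\dK$ of positive curvature --- such a point always exists (e.g.\ the farthest point of $\dK$ from an interior point, where all principal curvatures are at least $1/R$), and by continuity of the second fundamental form a whole neighborhood of $x$ has curvature bounded below. In that fixed neighborhood the osculating quadric $E$ at $x$ sandwiches $\dK$ between $(1+\varepsilon)E-\varepsilon x$ and $(1-\varepsilon)E+\varepsilon x$ (third-order error), the edge-length bound $O(\sqrt{\varepsilon})$ holds, and one transfers the cycles of Corollary~\ref{lem} there. Crucially, nothing is lost in the count: the number of pairwise $\Theta(\sqrt{\varepsilon})$-separated points from Lemma~\ref{lem:sphCod} that land inside a fixed-size cap is already $\Omega(\varepsilon^{\frac{1-d}{2}})$, just with a smaller constant. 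If you replace your global family $\{E_{x_i}\}_{x_i\in\dK}$ by this single-point localization (restricting your packing of the $x_i$ to a fixed uniformly convex cap around one positive-curvature point), the rest of your argument --- the $O(r^2)$ versus $\Theta(r)$ comparison, the disjointness of the $W_i$, and the application of Theorem~\ref{thm:QLBT} or Lemma~\ref{lem:QLBT} --- is correct and matches the paper.
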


\begin{proof}
Let $x$ be a point in $\partial K$ of positive curvature and let $E$ be the tangent conic to $\partial K$ at $x$ given by the Hessian of $\partial K$ at $x$. Then there is a neighborhood of $x$ in $\partial K$ that lies between $(1+\varepsilon) E-\varepsilon x$ and $(1-\varepsilon) E+\varepsilon x$. This follows from the fact that $K$ and $E$ have the same tangent space at $x$ and the same Hessian, thus the error in approximation is of the third order (see Schneider \cite[Chapter 2.5]{Schneider:book2nd}. The projections to the tangent plane at $x$ gives a homeomorphism between neighborhoods of $x$ in $\partial K$ and $\partial E$  that allows to transfer cycles in $\partial E$ of Lemma~\ref{lem} to cycles in $\partial K$. Approximating those cycles give the desired lower bound as in Theorem~\ref{thm:QSLBT} since the number of cycles in ellipsoids can be estimated locally up to a constant.
%This is a simple variation of the above argument, as we can find a point $x$ of $K$ of positive curvature, and find the paraboloid $R$ given by the Hessian in $x$ that approximates $K$ in $x$. The dilations $(1+\varepsilon) R-\varepsilon x$ and $(1-\varepsilon) R+\varepsilon x$ are locally contained in $K$ resp.\ locally contains it. The argument above applies directly.
\end{proof}

\begin{rem}
Using arguments of B{\"o}r{\"o}czky \cite{MR1742207}, one can refine Theorem~\ref{thm:main3} to show that, for some constant $C$ independent of $K$, $k$ and $d$, we have
\[g_k(P)\ge C \cdot \left(\int_{\partial K} \kappa(x)^{\frac{1}{d+1}}\right)^{-\frac{d+1}{d-1}} \cdot \left(d\cdot \delta^H(P, K)\right)^{\frac{1-d}2}\]
where $\kappa(x)$ is the determinant of the second fundamental form.
\end{rem}
\begin{rem}[Tightness]
Notice that, by B\"or\"oczky's  \cite[Theorem B]{MR1742207}, in an optimal $\varepsilon$-approximation of the unit $d$-ball $\B$ by a polytope $P$, the number $f_k$ of $k$-faces is bounded above by $C\varepsilon^{\frac{1-d}2}$, where $C$ is a constant depending only on $\B$. Since $g_k\le f_{k-1}$, the lower bound is tight up to a constant. Again, this result holds for other distance notions as well, and extends to approximations of convex bodies with $C^2_+$ boundary, i.e convex bodies whose Gaussian curvature is positive at every boundary point.
\end{rem}
\begin{rem}[Upper bounds: Conjecture~\ref{Kalai}(ii) holds for random polytopes.]\label{rem:barany}
%\eran{Note: this is weaker than what you originally claimed. Here sampling is from entire $K$. Do we get the better estimates you claimed when sampling from $\dM$? another REF?}\jose{Added a tightness paragraph above. I also don't see why the fiber above $\pi(v)$ is contained in $H$.}
%The bounds of Theorem~\ref{thm:main3} are tight, namely, as $m$ tends to infinity, there are polytopes $P_m$ with $\dH(P_m,K)<\frac{1}{m}$ and
%$g_k(P_m)$ of order $m^{\frac {d-1}2 +o(1)}$ for every $1\le k\le \frac{d}{2}$.

%\begin{proof}
B\'{a}r\'{a}ny \cite[Theorem 6, Corollary 2]{Barany} showed
that if $P_n$ is a polytope obtained from sampling $n$ points uniformly at random in
%the boundary of
a $C^2$-convex body $K$ then $\mathbb{E}(\delta^H(P_n, K))= \Theta(\left(\frac{\log n}{n}\right)^{\frac{2}{d+1}})$. (B\'{a}r\'{a}ny assumed positive Gaussian curvature, but B\"or\"oczky's results show this assumption is not needed.)
%O(n^{\frac {1-d}2}) = O(\varepsilon(n))$.
Furthermore, for any $0\le k \le d-1$ he showed that $\mathbb{E}(f_k(P_n)) = %O(n) = O\left(\varepsilon(n)^{\frac{1-d}2}\right)$.
\Theta(n^{\frac{d-1}{d+1}})$.
%Thus, for small enough $\varepsilon=\dH(P_n,K)$, we get $f_k(P_n)=O(\varepsilon^{\frac{1-d}2 +o(1)})$.
%As each $f_k$ is a linear combination of $1,g_1,\ldots,g_k$ with positive integer coefficients, we have $g_k\le f_k$ and (weak) tightness follows.
%\end{proof}

Combining this with Theorem~\ref{thm:main3} we conclude that part (ii) of Kalai's conjecture holds for \emph{random} simplicial polytopes.
Indeed, for $\frac{1}{m+1}\leq \dH(P,K)<\frac{1}{m}$ small enough, $g_k(P)$ is of order $m^{\frac{d-1}{2}+o(1)}$ for all $1\le k\le \frac{d}{2}$, thus $\partial^k(g_k(P))=O(m^{\frac{(d-1)(k-1)}{2k}+o(1)})=o(g_{k-1}(P))$, and (ii) follows from (i).
\end{rem}

\appendix \enlargethispage{3mm}
\section{Appendix: From induced homology cycles to affine stresses}

The purpose of this section is to prove Lemma~\ref{lem:QLBT}. Let us first observe a simpler and at first insufficient lemma that gets us almost to the goal.

\begin{lemma}\label{lem:QLBTi}
Let $\gamma$ denote a simplicial $(k-1)$-sphere contained in the boundary of a simplicial $d$-polytope $P$, where $k\le \frac{d}{2}$. Assume that $\gamma$ is an induced subcomplex in $\Delta=\partial P$.
Then the simplicial neighborhood $\Gamma$ of $\gamma$ in $\Delta$ supports a $k$-stress homologous to the fundamental class of $\gamma$ as a cycle in $\Gamma$.
\end{lemma}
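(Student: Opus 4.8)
The plan is to use the standard dictionary between the top homology of the toric variety and the hard Lefschetz theorem, as encoded in the theorem of \cite{MR1228132} quoted just above: for a simplicial $d$-polytope $P$ and $k\le d/2$, the space of $k$-stresses supported in $P$ has dimension $g_k(P)$, and moreover this space can be identified with the degree-$(d-k)$ piece $A^{d-k}$ of (a generic Artinian reduction of) the Stanley-Reisner ring, where $k$-stresses correspond under a perfect pairing to linear forms times the socle. The key input I would invoke is that the space of $k$-stresses is naturally dual, via the cap product with the fundamental class of $\partial P$, to $H_{k-1}$ of the complex, so that an induced homology cycle of dimension $k-1$ gives rise to a $k$-stress.

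First I would set up the chain-level picture: a $(k-1)$-cycle $z$ representing the fundamental class of $\gamma$ in $\Delta_W=\gamma$ (here $W=V(\gamma)$) is a formal sum of $(k-1)$-faces of $\gamma$. I want to promote $z$ to a $k$-stress, i.e.\ a $(k-1)$-chain satisfying the Minkowski balancing conditions at every $(k-2)$-face. The obstruction to $z$ itself being a stress is that the boundary-type condition it satisfies is the \emph{simplicial} boundary (it is a cycle) rather than the \emph{affine} (Minkowski-weighted) boundary. The remedy is to correct $z$ by a $k$-chain supported in $\Gamma$: I would consider the simplicial chain complex of $\Gamma$ equipped with the affine differential, observe that $\Gamma$ deformation retracts onto $\gamma$ (it is the simplicial neighborhood, a regular neighborhood of $\gamma$ in the PL-ball $\partial P$ minus nothing relevant), so $\tilde H_{k-1}(\Gamma)\cong \tilde H_{k-1}(\gamma)=\mathbb{Q}$ is generated by $z$. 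Then I would use that affine $k$-stresses supported in $\Gamma$ surject onto $\tilde H_{k-1}(\Gamma)$ — this is the crucial step — so there is a $k$-stress $\omega$ supported in $\Gamma$ whose underlying $(k-1)$-chain is homologous to $z$ in $\Gamma$.

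The surjectivity of the map ``$k$-stresses supported in $\Gamma$'' $\to \tilde H_{k-1}(\Gamma)$ is the main obstacle. I would derive it from the global statement for $P$: by \cite{MR1228132} together with hard Lefschetz, every top cohomology class of the toric variety (equivalently, every element of the socle of the generic Artinian reduction $A(\Delta)$, equivalently every $k$-stress of $P$ after applying the Lefschetz pairing $d-2k$ times) is detected by a dual homology class, and conversely every nontrivial $(k-1)$-homology class of an induced subcomplex $\gamma\subseteq\partial P$ is carried by a $k$-stress of $\partial P$ supported arbitrarily close to $\gamma$; ``arbitrarily close'' here means the stress can be taken supported in the simplicial neighborhood $\Gamma$, because the stress space has a local description (a stress is determined by the balancing conditions, which are local, and one can run a partition-of-unity/locality argument, or invoke Lee's \cite{MR1384883} description of the stress space via the cone structure, to push the support into $\Gamma$). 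Finally I would check that the resulting $\omega$ is homologous to the fundamental class of $\gamma$ \emph{within $\Gamma$} and not merely within $\partial P$: this is exactly what the retraction $\Gamma\searrow\gamma$ guarantees, since $\tilde H_{k-1}(\Gamma)\cong\tilde H_{k-1}(\gamma)$ and both $\omega$ and $z$ map to the same generator. This completes the proof; the passage from the induced-subcomplex hypothesis here to the ``$\tau\cap\gamma\neq\emptyset$'' neighborhood hypothesis of the stronger Lemma~\ref{lem:QLBT} will require the additional argument deferred to the rest of the appendix, where one must handle the case that $\gamma$ is not induced by carefully analyzing the faces of $\Delta$ spanned by vertices of $\gamma$.
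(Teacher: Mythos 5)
You correctly reduce the lemma to the surjectivity of the map from $k$-stresses supported in $\Gamma$ onto $\tilde H_{k-1}(\Gamma)\cong\tilde H_{k-1}(\gamma)$, and the identification of the target with $\tilde H_{k-1}(\gamma)$ via the retraction of the simplicial neighborhood onto the full subcomplex $\gamma$ is fine. But your justification of that surjectivity is where the proof breaks down, and it is essentially circular: the assertion that ``every nontrivial $(k-1)$-homology class of an induced subcomplex of $\partial P$ is carried by a $k$-stress supported in its simplicial neighborhood'' \emph{is} the content of the lemma (indeed of Theorem~\ref{thm:QLBT}), not a consequence of McMullen's theorem or of hard Lefschetz, which only give the global identification $g_k(P)=\dim\{k\text{-stresses}\}$ and say nothing about supports. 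The proposed ``partition-of-unity/locality argument'' to push the support of a global stress into $\Gamma$ does not exist: a stress cannot be truncated by a cutoff function, since rescaling its values near $\gamma$ destroys the Minkowski balancing condition at every $(k-2)$-face where the rescaling is non-constant. Controlling the support is exactly the hard content of the lemma, and your plan defers it to an unspecified locality principle.

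The paper supplies the missing mechanism by arguing local-to-global rather than global-to-local: it covers the neighborhood by the open stars $\St_\Delta^\circ v$ of the vertices $v$ of $\gamma$, whose nerve $\mathcal{N}$ is exactly $\gamma$ because $\gamma$ is induced, and runs the generalized Mayer--Vietoris (\v{C}ech) spectral sequence not for ordinary homology but for Ishida's chain complex computing stress groups (Tay--Whiteley). Since the local pieces are stars (cones), their stress groups behave trivially in the relevant range $i\le \frac d2$, and the spectral sequence yields a natural surjection from $k$-stresses supported on $\bigcup_v \St_\Delta v=\Gamma$ onto $\tilde H_{k-1}(\mathcal{N})=\tilde H_{k-1}(\gamma)$. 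To repair your argument you need an input of this local-to-global type; the global hard Lefschetz statement alone cannot localize a stress.
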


\begin{proof}
%Some basic notation:
The simplicial neighborhoods of vertices $v$ in $\Delta$ are denoted by $\St_\Delta v$, and their interiors are denoted by $\St_\Delta^\circ v$.

The proof of Lemma \ref{lem:QLBTi} is now the same as for induced cycles in the graph of $P$ by Kalai \cite{Kalai-LBT} that we used in Lemma~\ref{lem:links2}:
Consider the family \[(U_v:=\St_\Delta^\circ v)_{v \text{ vertex of $\gamma$}}.\]
%Since $\gamma$ is an induced subcomplex in $\Delta$,
This is a good cover of the open set $\cup_{v \text{ in $\gamma$}}U_v$.
The generalized Mayer--Vietoris principle given by the \v{C}ech complex of this cover gives a double complex whose spectral sequences compute homology groups of the nerve $\mathcal{N}$ of~$(U_v)$. Instead of applying this to compute the usual homology groups, however, we can also apply this to Ishida's chain complex \cite{MR951199} for stress groups, which is worked out in detail by Tay--Whiteley \cite[Sections 10 \& 12]{MR1773196}.

With this, we get straightforwardly a natural surjection
\begin{equation}\label{eq:surj}
\left\{\text{$i$-stresses of $\bigcup_{v \text{ vertex of $\gamma$}} \St_\Delta v$ }\right\}\ \longtwoheadrightarrow\ \widetilde{H}_{i-1}(\mathcal{N})
\end{equation}
where $i$ is the smallest integer $\le \frac{d}{2}$ so that $\widetilde{H}_{i-1}(\mathcal{N})$ is nontrivial.
But $\mathcal{N}$ is just the simplicial sphere $\gamma$ itself, because $\gamma$ is induced in $\Delta$. The claim follows.
\end{proof}

\begin{rem}[An even more elementary approach]
Following Provan and Billera \cite{MR593648}, a simplicial complex is \emph{vertex-decomposable} if it consists of a single facet, or it is pure and there is some vertex of the simplicial complex whose link and deletion are both vertex-decomposable. Here, the \emph{deletion} of a vertex from a simplicial complex is the subcomplex induced by the facets not containing that vertex.

If, in the situation of Lemma~\ref{lem:QLBTi}, $\gamma$ is {vertex-decomposable}, then we can use that fact to replace the use of the generalized Mayer--Vietoris principle by a simple argument relying on the usual Mayer--Vietoris exact sequence.
To see this, iteratively apply the Mayer--Vietoris sequence to the Ishida complex and the simplicial chain complex in parallel when building $\bigcup_{v \text{ vertex of $\gamma$}} \St_\Delta v$ vertex after vertex along the vertex-decomposition of $\gamma$.
Comparing both exact sequences obtained, which are connected by the natural map sending chains of the Ishida complex to simplicial chains, we recover the surjection \eqref{eq:surj} in this restricted setting by a straightforward induction.

To justify this detour, we remark that we are solely interested in applying Lemma~\ref{lem:QLBTi} when $\gamma$ is combinatorially isomorphic to the barycentric subdivision of a polytope boundary, and therefore is indeed vertex-decomposable 
%following Provan--Billera
~\cite[Corollary~3.3.3]{MR593648}. Hence, this simpler (but more technical, and perhaps less transparent) reasoning would also fully suffice for our purposes.
\end{rem}

We now are left with the delicate task of extending this to a proof of Lemma~\ref{lem:QLBT}.

\begin{proof}[\textbf{Proof of Lemma~\ref{lem:QLBT}}]
%Observe that the case $k=1$ is trivial, so that we may restrict to $k\ge 2$.
Consider $\Delta=\partial P$ and $\Delta'$ the result of a barycentric subdivision of all faces intersecting $\gamma$, see Figure~\ref{fig:cycle}. Then the corresponding subdivision $\gamma'$ of $\gamma$ in $\Delta'$ is an induced subcomplex of $\Delta'$, and of the induced subdivision of $\Gamma$, denoted $\Gamma'$. We consider also the open simplicial complex
\[\widecheck{\Gamma}\ :=\ \{\sigma \in \Gamma: \sigma \cap \gamma\neq \emptyset\}\]
and its induced subdivision in $\Gamma'$, denoted by $\widecheck{\Gamma}'$.

\begin{figure}[htb]
\begin{center}
\includegraphics[scale = 0.5]{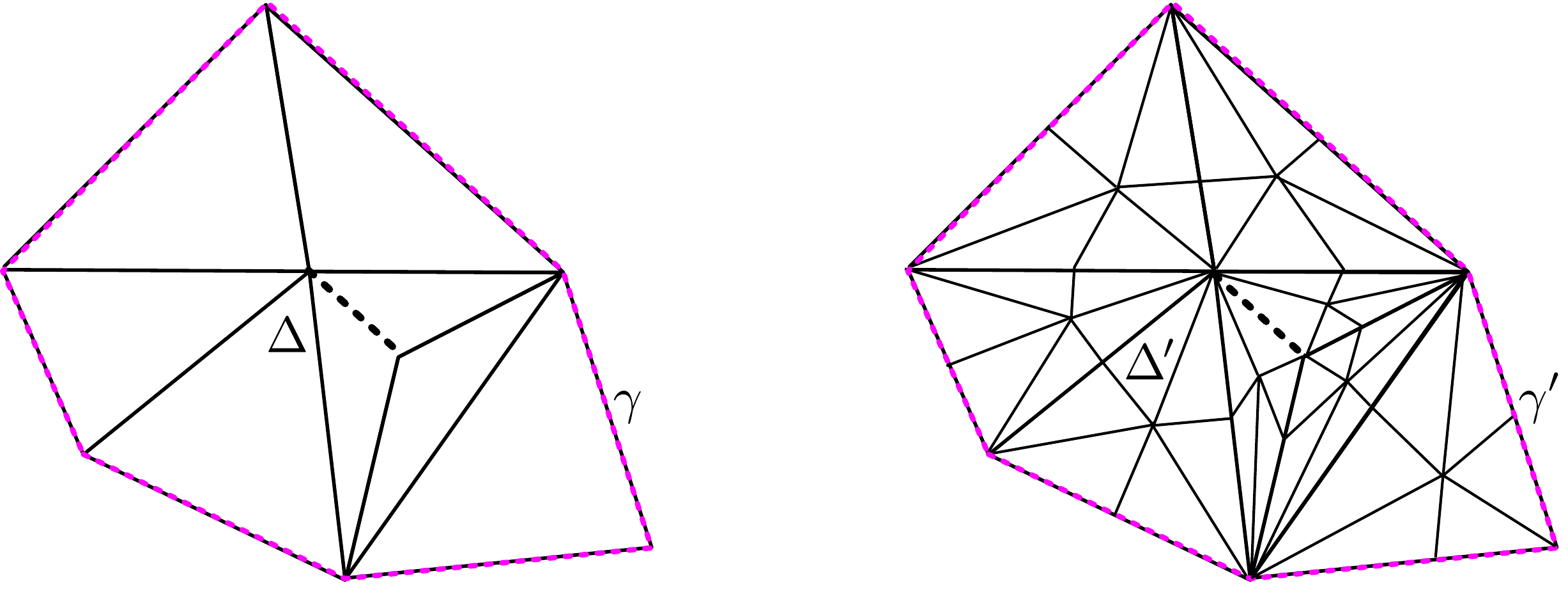}
\caption{Barycentric subdivision of a simplicial complex $\Delta$ at faces intersecting a cycle~$\gamma$. In the resulting complex $\Delta'$, the cycle $\gamma'$ subdividing $\gamma$ is an induced subcomplex. The dotted black edge is not subdivided in the process, as it is not incident to $\gamma$.}
\label{fig:cycle}
\end{center}
\end{figure}

\noindent The barycentric subdivision is algebraically realized by iterative blowups of the toric variety, or, combinatorially, by stellar subdivisions of $\Delta$, performed at faces intersecting $\gamma$ in order of decreasing dimension.
Following McMullen \cite{MR1228132}, this preserves the validity of the hard Lefschetz theorem. Hence we can apply Lemma~\ref{lem:QLBTi} to conclude that the fundamental class of $\gamma'$ is naturally homologous to a stress $\widetilde{\gamma}$ in its simplicial neighborhood $\widetilde{\Gamma}\subset \Gamma'$.
%, where $\Gamma'$ is the barycentric subdivision of $\Gamma$.

Now, we have to be careful since a stellar subdivision in the blowup sequence may introduce stresses on its own.
To control this, consider a simplicial $d$-polytope $X$ and its stellar subdivision $X'$ at a face $\sigma$.
Following McMullen again, the stresses of $X'$ are decomposed into pullbacks of stresses of $X$, and Gysin pullbacks of stresses in the face figure $X_\sigma$ of $\sigma$ in $X$, the latter of which are the ``new'' stresses to be controlled, see also
\cite[Thm.1.2(3)]{Babson-Nevo},
 \cite[Section 6]{MR1644323}, and \cite[Theorem 6.18]{AHP} for a detailed presentation of the Gysin maps involved.
It is straightforward to see that the latter stresses are naturally supported in the simplicial neighborhood of the link $\lk_\Delta \sigma$ of $\sigma$.

We conclude that all newly created $k$-stresses in the transition from $\Delta$ to $\Delta'$, where $k\ge 2$, seen as $(k-1)$-cycles, are supported in $\Gamma'$. Moreover, they are zero-homologous as simplicial cycles in $\widecheck{\Gamma}'$.

But the stress $\widetilde{\gamma}$ generates a nontrivial homology class in $\widecheck{\Gamma}'$, so it is linearly independent of the newly created stresses. Hence, we may blow down again, which maps $\widetilde{\gamma}$ to a nontrivial stress supported in the simplicial neighborhood of~$\gamma$, as desired.
\end{proof}

{\small
\providecommand{\bysame}{\leavevmode\hbox to3em{\hrulefill}\thinspace}
\providecommand{\MR}{\relax\ifhmode\unskip\space\fi MR }
% \MRhref is called by the amsart/book/proc definition of \MR.
\providecommand{\MRhref}[2]{%
  \href{http://www.ams.org/mathscinet-getitem?mr=#1}{#2}
}
\providecommand{\href}[2]{#2}

}
\end{document}